\documentclass[12pt, reqno]{amsart}
\usepackage[utf8]{inputenc}
\usepackage{amsmath}
\usepackage{amsthm}
\usepackage{amsfonts}
\usepackage{amssymb}
\usepackage{mathrsfs, mathtools, mathdots,comment}
\usepackage{diagbox}
\usepackage[T1]{fontenc}
\usepackage{amscd}
\usepackage{enumitem}
\usepackage[mathscr]{eucal}
\usepackage{cleveref}
\usepackage[top=25mm,bottom=25mm,left=27mm,right=27mm]{geometry}

\setenumerate[1]{label=\arabic*.}

\theoremstyle{plain}
\newtheorem{Th}{Theorem}
\newtheorem{Lemma}{Lemma}
\newtheorem{Cor}{Corollary}
\newtheorem{Prop}{Proposition}

\newtheorem{property}{Property}
\makeatletter
\newcommand{\settheoremtag}[1]{
	\let\oldtheproperty\theproperty
	\renewcommand{\theproperty}{#1}
	\g@addto@macro\endproperty{
		\addtocounter{property}{-1}
		\global\let\theproperty\oldtheproperty}
}
\makeatother

\newtheorem{Conj}{Conjecture}

\theoremstyle{definition}

\newtheorem{Def}{Definition}[section]
\newtheorem{Rem}{Remark}[section]

\newtheorem{Ex}{Example}[section]

\newcommand{\C}{\mathbb{C}}
\newcommand{\bQ}{\overline{\mathbb{Q}}}
\newcommand{\Z}{\mathbb{Z}}

\newcommand{\D}{\mathcal{D}}

\newcommand{\Gal}[1]{Gal(#1)}

\newcommand{\Proj}{\mathrm{Proj}}
\newcommand{\Frob}[1]{\sigma_{#1}}
\newcommand{\cond}[1]{cond(#1)}

\begin{document}
\title{On primitivity and vanishing of Dirichlet Series}

\author[A. Bharadwaj]{Abhishek Bharadwaj}

\address{Department of Mathematics, Queen's University, 48 University Ave. Jeffery Hall, Kingston, ON Canada K7L 3N6}
\email{atb4@queensu.ca}

\date{\today}

\subjclass[2010]{11J72, 11M99}
\begin{abstract}
For a rational valued periodic function, we associate a Dirichlet series and provide a new necessary and sufficient condition for the vanishing of this Dirichlet series specialized at positive integers. This question was initiated by Chowla, and carried out by Okada for a particular infinite sum. Our approach relies on the decomposition of the Dirichlet characters in terms of primitive characters. Using this, we find some new family of natural numbers for which a conjecture of Erd\"{o}s holds. We also characterize rational valued periodic functions for which the associated Dirichlet series vanishes at two different positive integers under some additional conditions.     
\end{abstract}
\maketitle
\section{Introduction}\label{SEC:INTRO}
In a written communication to Livingston \cite{LIVINGSTON}, Erd\"{o}s proposed the following conjecture: 
\begin{Conj}[Erd\"{o}s]\label{CONJ:ERDOS}
	If $N$ is a positive integer and $f$ is an arithmetic function with period $N$ and $f(n) \in \{-1,1\}$ when $n= 1,2,\dots N-1$ and $f(n)=0 $ whenever $n\equiv 0 \bmod{~} N$ then $\sum \limits_{n \ge 1} \frac{f(n)}{n} \neq 0$. 
\end{Conj}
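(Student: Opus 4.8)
\emph{Reduction to the convergent case.} First I would observe that $\sum_{n\ge 1} f(n)/n$ converges (conditionally) if and only if the mean value $\frac{1}{N}\sum_{a=1}^{N} f(a)$ vanishes; when the mean is nonzero the partial sums grow like a nonzero multiple of $\log X$, the series diverges, and the assertion holds trivially. Since $f(N)=0$ and $f(a)=\pm1$ for $1\le a\le N-1$, the mean-zero condition reads $\sum_{a=1}^{N-1} f(a)=0$, which already forces $N$ to be odd and the number of $+1$'s to equal the number of $-1$'s. I may therefore assume from the outset that $N$ is odd and that $f$ is a rational-valued mean-zero periodic function, and I work with the convergent value $L(1,f):=\sum_{n\ge1} f(n)/n$.

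\emph{From the analytic value to logarithms of algebraic numbers.} The plan is to make the transcendence structure of $L(1,f)$ explicit. Writing $\zeta_N=e^{2\pi i/N}$ and using the digamma/Hurwitz expansion of $\sum_{n\ge1} f(n)/n$ for mean-zero $f$, I would split $f$ into its even and odd parts and express $L(1,f)$ as a rational combination of the real logarithms $\log|1-\zeta_N^{a}|$ (the even part) together with terms $\pi\cot(\pi a/N)$ coming from the odd part. Since $\cot(\pi a/N)$ is algebraic and $\pi$ is an algebraic multiple of $\log(-1)=i\pi$, the entire value lies in the $\overline{\mathbb{Q}}$-linear span of logarithms of algebraic numbers. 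At this point Baker's theorem on linear forms in logarithms applies: the combination vanishes if and only if it vanishes after collapsing all $\mathbb{Q}$-linear relations among the underlying logarithms, so ``$L(1,f)=0$'' becomes a purely rational-linear system in $f(1),\dots,f(N-1)$. This is the mechanism behind Okada's criterion, and it is the step where I would insert the paper's refinement.

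\emph{Character decomposition and the engine.} Here I would decompose $f$ along Dirichlet characters and, crucially, rewrite everything through \emph{primitive} characters. For a character $\chi\bmod N$ induced by the primitive character $\chi^\ast$ of conductor $f_\chi$ one has $L(1,\chi)=L(1,\chi^\ast)\prod_{p\mid N}\bigl(1-\chi^\ast(p)/p\bigr)$, and the Baker--Birch--Wirsing theorem guarantees that the $L(1,\chi^\ast)$ attached to distinct \emph{even} primitive characters are linearly independent over $\overline{\mathbb{Q}}$, while every odd $L(1,\chi^\ast)$ is an algebraic multiple of $\pi$, which is itself independent over $\overline{\mathbb{Q}}$ from the even logarithms. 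Consequently $L(1,f)=0$ forces its even and odd parts to vanish separately, and feeding the even/odd split into this independence decouples the rational system across primitive constituents and across the primes $p\mid N$: vanishing forces, for each primitive constituent, a rational identity among the Fourier coefficients of $f$ twisted by the Euler factors $1-\chi^\ast(p)/p$, and separately a relation $\sum_a f(a)\cot(\pi a/N)=0$. These form the clean finite set of conditions I would then test against $f(a)\in\{\pm1\}$.

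\emph{The obstacle.} The genuine difficulty is this final step. The rational system above generically has many nonzero solutions --- there really do exist nonzero mean-zero rational $f$ with $L(1,f)=0$, built from the $\mathbb{Q}$-relations among cyclotomic units or among the values $\cot(\pi a/N)$ --- so non-vanishing cannot be read off formally; one must show that \emph{no} such solution is $\pm1$-valued with $f(N)=0$. When $N$ has several prime factors the Euler factors $1-\chi^\ast(p)/p$ interlock and the $\log p$ contributions can conspire to cancel, which is precisely what blocks a proof for general $N$. I therefore expect the argument to succeed only when the group $(\mathbb{Z}/N\mathbb{Z})^\ast$ and its character lattice are rigid enough to preclude a $\pm1$ solution --- for instance when $N$ is prime or a prime power, or when the primitive-character and Euler-factor bookkeeping leaves no room for one --- and this is the sense in which I would expect to extract ``a new family of $N$'' rather than the full conjecture.
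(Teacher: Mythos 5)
There is no ``paper's own proof'' to compare against here: the statement you were asked to prove is Conjecture \ref{CONJ:ERDOS}, which the paper states as an open conjecture and does not resolve. The paper's actual contributions toward it are partial: Theorem \ref{TH:BBW} (Baker--Birch--Wirsing) settles the case $N$ an odd prime, the cited work of Murty--Saradha settles $N\equiv 3\bmod 4$, and the paper's own Theorem \ref{TH:ERD-INDUCTION} and Corollary \ref{COR:NEW-FAMILY-ERDOS} produce a new family of $N\equiv 1 \bmod 4$ (of the form $pN'$ with $N'\equiv 3\bmod 4$ squarefree and $p$ a sufficiently large prime) for which the conjecture holds. So your proposal cannot be ``correct'' as a proof of the statement, and to your credit you do not claim it is: your final paragraph concedes that the rational linear system has nonzero solutions in general and that you can only hope to rule out $\pm 1$-valued ones for restricted $N$. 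That concession is exactly where the genuine gap lies, and it is the same gap that keeps the conjecture open.

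On the substance of what you do write: the reduction (convergence iff $\sum_{a=1}^{N-1}f(a)=0$, which forces $N$ odd, so the even case is vacuous) matches the paper's remark before Theorem \ref{TH:4p}; the passage from $L(1,f)$ to $\overline{\mathbb{Q}}$-linear forms in logarithms via the digamma/Hurwitz expansion, Baker's theorem, and the decomposition over primitive characters with Euler factors $1-\chi^\ast(p)/p$ is precisely the framework of Okada's criterion and of the paper's Theorem \ref{TH:NEW-NECESSARY-SUFFCIENT-VANISHING-L1F}. What the paper adds, and what your sketch does not reach, is the specific mechanism for exploiting that system: a direct-sum decomposition of $\mathbf{F_D}(N)$ by conductor, the reformulation of each vanishing condition as imprimitivity of an explicit group-ring element (conditions \eqref{EQN:INIT-COND-DIVISORS} and \eqref{EQN:COND-PRIMES}), and an integrality/descent argument (Proposition \ref{PROP:IMPRIMITIVE-INDUCTION} feeding into \eqref{EQN:IND-PROCESS}) showing that for $p$ large relative to $N$ the relevant integer-valued group-ring elements must vanish identically, contradicting Property \ref{PROPERTY-U} for the period-$N$ component. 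If you want to turn your outline into something provable, that descent on $\omega(N)$ --- not a rigidity argument about $(\mathbb{Z}/N\mathbb{Z})^\ast$ for $N$ prime or a prime power, which is already covered by Theorem \ref{TH:BBW} --- is the missing idea.
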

Following \cite{SIDDHI}, we describe the functions $f$ satisfying the condition of Conjecture \ref{CONJ:ERDOS} as Erd\"{o}s functions. We abbreviate the infinite sum as $ L(1,f) $ where $ L(s,f) $ denotes the Dirichlet series $ \sum_{n=1}^{\infty} f(n)n^{-s} $ for $ \Re{s}>1 $. The question of non-vanishing for special values of Dirichlet series not admitting an Euler product was first initiated by Chowla \cite{CHOWLA}. More precisely, the interest was to extend the result on the non-vanishing of $ L(1,\chi) $ for a non-principal Dirichlet character $ \chi $ to non-vanishing of $ L(1,f) $ for a rational valued arithmetic function $ f $ of period $ N $ whenever the sum converges. An important step towards this question was provided by Baker, Birch and Wirsing \cite{BAKER-BIRCH-WIRSING} where they proved the following result \footnote{This theorem is proved in a more general context with an extra condition on the field of values of $ f $}
:      
\begin{Th}\label{TH:BBW}
	Let $f$ be a non-zero rational valued arithmetic function of period $N$ satisfying $f(n) = 0 \text{ whenever } 1 < (n,N) < N$. Then $L(1,f) \neq 0$ whenever the sum converges.
\end{Th}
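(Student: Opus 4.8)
The plan is to convert the vanishing of $L(1,f)$ into a linear relation among logarithms of algebraic numbers and then apply Baker's theorem on linear forms in logarithms, using the Galois action on cyclotomic numbers to exploit the support hypothesis. First I would record the analytic evaluation: convergence of $\sum_n f(n)/n$ forces the mean value $\sum_{a=1}^{N} f(a)$ to vanish, which cancels the poles of the associated Hurwitz zeta functions, and expanding $f$ in the finite Fourier basis $f(n)=\sum_{b}\widehat f(b)\,\zeta_N^{bn}$ with $\zeta_N=e^{2\pi i/N}$ yields
\[
L(1,f)=-\sum_{b=1}^{N-1}\widehat f(b)\,\log\!\bigl(1-\zeta_N^{b}\bigr),\qquad \widehat f(b)=\tfrac1N\sum_{n}f(n)\zeta_N^{-bn}\in\mathbb{Q}(\zeta_N)\subset\bQ.
\]
Thus $L(1,f)$ is a $\bQ$-linear combination of logarithms of the algebraic numbers $1-\zeta_N^{b}$. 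Since $f$ is rational-valued the quantity $L(1,f)$ is real; writing $f=f_++f_-$ for its even and odd parts, $L(1,f_+)$ is a real combination of the real logarithms $\log|1-\zeta_N^{b}|$ (logarithms of cyclotomic units, together with the prime produced by $\prod_{(a,N)=1}(1-\zeta_N^{a})=\Phi_N(1)$), while $L(1,f_-)$ is a $\bQ$-multiple of $\pi$. Because $\pi$ is not a $\bQ$-linear combination of logarithms of positive algebraic numbers (a consequence of Baker's theorem), the equation $L(1,f)=0$ separates into $L(1,f_+)=0$ and $L(1,f_-)=0$.

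Next I would bring in Baker's theorem in the form: every $\bQ$-linear relation among logarithms of algebraic numbers is a $\bQ$-linear combination of the relations forced by genuine multiplicative relations among those numbers. Applied to the identity above, $L(1,f)=0$ forces the coefficient vector $(\widehat f(b))$ to lie in the $\bQ$-span of the lattice of multiplicative relations among $\{1-\zeta_N^{b}\}$ (enlarged by $-1$ and the primes $p\mid N$). Equivalently, grouping the residues coprime to $N$ through Dirichlet characters turns the vanishing into a linear relation among the values $L(1,\chi)$ for $\chi\neq\chi_0$ and the logarithms $\log p$, with coefficients $c_\chi=\tfrac1{\phi(N)}\sum_a f(a)\overline{\chi(a)}\in\bQ$. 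The force of the hypothesis $f(n)=0$ for $1<(n,N)<N$ is precisely that only characters modulo $N$ (and the single indicator at $0$) occur here, so no intermediate moduli pollute the relation.

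I would then finish with the Galois argument. For the odd part, $f_-=\sum_{\chi(-1)=-1}c_\chi\chi$ gives $L(1,f_-)=\sum_\chi c_\chi L(1,\chi)$; using the nonvanishing of $L(1,\chi)$ together with the fact that the vector $(c_\chi)$ is permuted by $\Gal{\bQ/\mathbb{Q}}$ (because $f_-$ is rational-valued), the vanishing of this Galois-stable sum forces every $c_\chi=0$, hence $f_-=0$. For the even part, $L(1,f_+)=0$ is a real relation among logarithms of cyclotomic units; here I would apply each automorphism $\sigma_c\in\Gal{\mathbb{Q}(\zeta_N)/\mathbb{Q}}$, which sends $1-\zeta_N^{a}\mapsto 1-\zeta_N^{ca}$ and permutes the coprime residues, push the relation through the full Galois orbit, and use the rationality of $f$ together with Baker's theorem to collapse the admissible relations down to the trivial norm relation. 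As the latter is incompatible with a nonzero even rational function of the prescribed support, $f_+=0$ as well, and $f\equiv0$ contradicts $f\neq0$.

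The main obstacle is the even part. The $\phi(N)$ numbers $1-\zeta_N^{a}$ with $(a,N)=1$ are very far from multiplicatively independent: their relation lattice has rank $\phi(N)/2-1$, equal to the unit rank of the maximal real subfield of $\mathbb{Q}(\zeta_N)$, so Baker's theorem alone does not annihilate the relation — one must pin down the precise structure of the circular units and combine it with the Galois averaging to show that no nontrivial rational even function of the given support can create such a relation. The clean way to organize this is to reduce each $\chi$ to the primitive character inducing it, so that the relevant Gauss sums and cyclotomic units become multiplicatively independent across distinct conductors; this is exactly the step where the full strength of Baker's theorem (rather than the elementary nonvanishing of $L(1,\chi)$ used for the odd part) is indispensable.
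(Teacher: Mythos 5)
This statement is not proved in the paper at all: it is quoted verbatim from Baker--Birch--Wirsing \cite{BAKER-BIRCH-WIRSING}, and the paper only remarks that their proof writes $L(1,f)$ as a linear form in logarithms of cyclotomic numbers via Plancherel and then invokes Baker's theorem. Your opening is exactly that reduction, and your overall strategy (Fourier expansion, Baker to make the relation Galois-equivariant, splitting into odd and even parts, killing the odd part through the $\bQ$-linear independence of $\pi$ from real logarithms and of the odd $L(1,\chi)$) is the correct skeleton of the BBW argument. But the proposal has a genuine gap precisely where you flag ``the main obstacle'': the even part is described, not proved. Saying that one must ``pin down the precise structure of the circular units and combine it with the Galois averaging'' names the difficulty without supplying the idea that resolves it.

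Concretely, the missing ingredient is the non-degeneracy of the group determinant $\det\bigl(\log|1-\zeta_N^{ab^{-1}}|\bigr)_{a,b}$ (restricted to the augmentation-zero part), which by the Dedekind--Frobenius factorization equals, up to nonzero factors, $\prod_{\chi \text{ even},\,\chi\neq\chi_0} L(1,\chi)\neq 0$ --- this is the substance of \cite[Theorem 3]{BAKER-BIRCH-WIRSING}, quoted after Corollary \ref{COR:N-prim-nonvanishing} above. Once Baker's theorem spreads the single relation $L(1,f_+)=0$ into the full Galois orbit $\sum_{(a,N)=1}f(a)\log|1-\zeta_N^{ab}|=\mathrm{const}$ for all $(b,N)=1$, this non-vanishing forces $f_+$ to be constant on the residues coprime to $N$, and a short residue/Euler-factor computation (producing a nonzero combination of $\log p$ for $p\mid N$, cf.\ Lemma \ref{LEM:ALG-UNITS-INDEPENDENT-PRIMES-LOG}) eliminates the constant. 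Your proposed substitute --- reducing each $\chi$ to its primitive inducing character so that ``the relevant Gauss sums and cyclotomic units become multiplicatively independent across distinct conductors'' --- does not work as stated: the numbers $1-\zeta_d^a$ for varying $d\mid N$ satisfy the norm (distribution) relations $\prod_{j=0}^{N/d-1}(1-\zeta_N^{a+jd}\,\cdot)=1-\zeta_d^{a}$-type identities, so they are far from multiplicatively independent across conductors, and Baker's theorem cannot separate them on independence grounds alone. Without the determinant (equivalently, regulator) input, the even part does not close, so the proof as written is incomplete.
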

Here the condition $f(N)=0$ is relaxed; however apart from this, we have to assume that the value $f(n)$ is zero whenever $n$ is not co-prime to $N$. Note that this theorem resolves Conjecture \ref{CONJ:ERDOS} when $N$ is an odd prime. The proof of this theorem uses Baker's theorem on linear forms in logarithms of algebraic numbers (Theorem \ref{TH:BAKER}). Baker's theorem is required in their proof as we can write $L(1,f)$ as a linear form in logarithm of cyclotomic numbers by appealing to Plancherel's theorem. Using this idea along with Theorem \ref{TH:BBW}, Okada \cite{OKADA} gave a necessary and sufficient criterion for the vanishing of $L(1,f)$ and most of the attempts in understanding Erd\"{o}s Conjecture are based on applying this criterion (see \cite{RAM-TAPAS}, \cite{SARADHA-ERDOS1}, \cite{SARADHA-ERDOS2}, \cite{SARADHA-TIJDEMAN}). This theorem converts the question of vanishing of $ L(1,f) $ into a question in linear algebra where $ f(1),\dots, f(N) $ is a solution for a system of linear equations with rational coefficients depending on $ N  $. The only place where this condition is not used is in \cite{RAM-SARADHA}. Here the authors proved Conjecture \ref{CONJ:ERDOS} is true for $ N \equiv 3 \bmod{~} 4 $. Our goal is to give an alternative criterion for the vanishing of $ L(1,f) $ and study the consequences of such theorems to Erd\"{o}s conjecture and related questions.

We observe that a slightly stronger result than the non-vanishing of $L(1,f)$ is contained in all the proofs towards Erd\"{o}s conjecture and we encompass it in the following property : 

\settheoremtag{U}
\begin{property}\label{PROPERTY-U}
For a rational valued periodic function $f$ of period $N$, the value $L(1,f)$ does not belong to the vector space $\mathbb{Q}\langle \log d~:~ d \mid N \rangle$. 
\end{property}
Note that this Property implies the non vanishing of the infinite sum $L(1,f)$. Let $\omega(N)$ denote the number of distinct prime divisors of $N$. 
With this, we state the first theorem.
\begin{Th}\label{TH:ERD-INDUCTION}
    Let $N$ be squarefree positive number and assume that $f$ is an Erd\"{o}s function satisfying Property \ref{PROPERTY-U}. Then for any prime \[ p > 2^{\omega(N)} \prod\limits_{\substack{q \mid N\\q \text{ prime }}}(q^2 + 2q +2),\] 
    and for any Erd\"{o}s function $g$ of period $ pN $ such that $g(pn) = f(n)$ for all natural numbers $n$, we have that $g$ satisfies Property \ref{PROPERTY-U}.  
\end{Th}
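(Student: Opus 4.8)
The plan is to expand $L(1,g)$ according to the divisibility of $n$ by $p$ and then according to the Dirichlet characters modulo $p$, and to show that the component of $L(1,g)$ along the transcendental direction attached to $f$ survives while the assumption that $p$ is large prevents any cancellation. First I would record the elementary identity obtained by splitting the series at the multiples of $p$: writing $h(n)=g(n)$ for $p\nmid n$ and $h(n)=0$ for $p\mid n$, the relation $g(pn)=f(n)$ gives
\[ L(1,g)=\tfrac1p\,L(1,f)+L(1,h). \]
Using the isomorphism $\mathbb{Z}/pN\cong\mathbb{Z}/p\times\mathbb{Z}/N$ I would decompose $h$ into its isotypic pieces for the characters $\psi$ modulo $p$, writing $h=\sum_{\psi}G_\psi$ with $G_\psi(n)=\psi(n)\,\phi_\psi(n\bmod N)$ and $\phi_\psi(v)=\tfrac1{p-1}\sum_{u}g(u,v)\overline\psi(u)$. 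For the principal character $\psi=\psi_0$ this contributes $L(1,G_{\psi_0})$, an $L$-value of a function of period $N$, hence an element of $\mathcal L_N:=\mathbb{Q}\langle\log|1-\zeta_N^a|:1\le a<N\rangle$; for $\psi\neq\psi_0$ each $G_\psi$ produces primitive characters of conductor divisible by $p$.

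The second step is to pass to the primitive-character description and invoke the linear independence of logarithms underlying Theorem \ref{TH:BBW}. Writing $W_p=\mathbb{Q}\langle\log d: d\mid pN\rangle=\mathbb{Q}\langle\log q:q\mid pN\text{ prime}\rangle$, Baker's theorem on linear forms in logarithms gives that the numbers $L(1,\chi)$ for distinct even primitive characters $\chi$ of conductor $>1$, together with the logarithms of the primes, are linearly independent over $\mathbb{Q}$ (the odd characters contribute independent $\pi$-multiples, which must vanish separately since $L(1,g)$ is real). Consequently $L(1,g)\in W_p$ forces the coefficient of $L(1,\chi)$ to vanish for every even primitive $\chi$. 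Since by Property \ref{PROPERTY-U} the transcendental part of $L(1,f)$ is nonzero, there is an even primitive character $\chi^*$ of conductor $d^*\mid N$ with $\hat f(\chi^*):=\sum_{m\bmod N}f(m)\overline{\chi^*}(m)\neq0$, and I would track the coefficient of $L(1,\chi^*)$.

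Because $\chi^*$ has conductor $d^*\mid N$ prime to $p$, only the principal part $\tfrac1p L(1,f)+L(1,G_{\psi_0})$ can contribute to this direction: the twisted characters $\psi\chi$ with $\psi\neq\psi_0$ all have conductor divisible by $p$ and are independent directions. Computing this coefficient and equating it to zero, the contribution of the terms $p\mid n$ equals a fixed nonzero quantity proportional to $\overline{\chi^*}(p)\,\hat f(\chi^*)$ (this is where $g(pn)=f(n)$ enters), while the terms $p\nmid n$ give a character sum of the values $g(n)\in\{-1,1\}$ weighted by $\overline{\chi^*}$. Clearing denominators, I would rewrite the resulting equality as an identity in the ring of integers $\mathbb{Z}[\zeta_{d^*}]$ of the shape
\[ p\,(A+B)=A\,\chi^*(p)+B, \]
with $A,B\in\mathbb{Z}[\zeta_{d^*}]$ and $B$ a fixed nonzero multiple of $\hat f(\chi^*)$. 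Taking the norm from $\mathbb{Q}(\zeta_{d^*})$ to $\mathbb{Q}$ should then exhibit $p$ as a divisor of a nonzero rational integer; estimating this integer prime-by-prime through the factorisation $\mathbb{Z}/N\cong\prod_{q\mid N}\mathbb{Z}/q$ and using $|g|,|f|\le1$ is intended to bound its absolute value by $2^{\omega(N)}\prod_{q\mid N}(q^2+2q+2)$, contradicting the hypothesis on $p$ and proving $g$ satisfies Property \ref{PROPERTY-U}.

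The hard part will be the last step: producing the correct bounded nonzero arithmetic invariant and proving the explicit estimate $2^{\omega(N)}\prod_{q\mid N}(q^2+2q+2)$. The delicacy is that the $p\nmid n$ character sum has size growing with $p$, so the contradiction cannot come from a crude size comparison but must arise from an arithmetic normalisation (a congruence or norm divisibility) in which the $p$-dependence cancels, after which Property \ref{PROPERTY-U} guarantees the relevant invariant is nonzero and the large-prime hypothesis supplies the contradiction. Two further points require care: establishing the precise form of Baker's linear independence for the mixed family of even $L$-values and prime logarithms over $\mathbb{Z}[\zeta_{pN}]$, and treating the degenerate sub-cases $A+B=0$ or $\chi^*(p)=1$, where one must either select a different primitive character furnished by Property \ref{PROPERTY-U} or argue directly that the values $g(u,v)$ cannot be independent of $u$ for every residue $v$.
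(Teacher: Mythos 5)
Your skeleton is genuinely close to the paper's strategy: the paper also splits $L(1,g)$ by the divisors of $pN$, sorts the resulting $L(1,\chi)$ by conductor, uses the linear independence underlying Theorem \ref{TH:BBW} together with Lemma \ref{LEM:ALG-UNITS-INDEPENDENT-PRIMES-LOG} to isolate the components of conductor prime to $p$, and derives an identity of the shape $(p-\sigma_p)\mathcal{P}(F)+\mathcal{P}(G)=0$ with $F,G$ integer valued, from which $F=G=0$ follows once $p$ exceeds an explicit bound depending only on $N$. But the step you yourself flag as ``the hard part'' is a genuine gap, and it is the heart of the proof. The coefficient of $L(1,\chi^*)$ coming from the part of $g$ supported on $p\nmid n$ is, after clearing the harmless denominators, of the form $\frac{1}{p-1}\sum_{v}S_v\overline{\chi^*}(v)$ with $S_v=\sum_{u\bmod p}g(u,v)\in[-(p-1),p-1]$: a rational number with denominator $p-1$ and no integrality or boundedness independent of $p$. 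No norm computation in $\mathbb{Z}[\zeta_{d^*}]$ can then produce a bound free of $p$. The paper's resolution is that one must use \emph{two} of the vanishing conditions of Theorem \ref{TH:NEW-NECESSARY-SUFFCIENT-VANISHING-L1F} in tandem: the condition $(V_D)$ at a divisor $D$ prime to $p$ first forces a certain integer-valued combination $h_D$ of the $g_d$ to be imprimitive $\bmod{~}pN/D$, and Proposition \ref{PROP:IMPRIMITIVE-INDUCTION} then shows that imprimitivity forces the inner sums $S_v$ to be constant along the $\tau_p$-orbit, so that $\Proj_{N/D}^{pN/D}(h_D)$ (after multiplying by $\prod_{q\neq p}(1-\tau_q)$) is integer valued and bounded by the values of $g$ alone. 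Only after this descent does the condition $(V_{pD})$ yield your equation with $A$ bounded independently of $p$. Without supplying this mechanism the contradiction for large $p$ cannot be reached.

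A secondary inaccuracy: Property \ref{PROPERTY-U} for $f$ does not hand you a single primitive character $\chi^*$ with $\sum_{m}f(m)\overline{\chi^*}(m)\neq 0$; since $f$ need not be of Dirichlet type, the invariant that Property \ref{PROPERTY-U} guarantees to be nonzero is the primitive $\bmod{~}N/D$ component of the full combination $\sum_{d\mid D}\frac{1}{d}\prod_{q}\bigl(1-\frac{\sigma_q^{-1}}{q}\bigr)\Proj_{N/D}^{N/d}(f_d)$ for some divisor $D$, i.e.\ the failure of one of the conditions \eqref{EQN:INIT-COND-DIVISORS}. Your bookkeeping must track all the pieces $f_d$ with their Euler factors, not just the restriction of $f$ to residues coprime to $N$; this is repairable but affects both the identification of the surviving transcendental direction and the final numerical bound. (Your appeal to realness of $L(1,g)$ to dispose of the odd characters is also not the right mechanism; the separation of the $\pi$-component is again a linear-independence statement, not a reality statement.)
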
 
We would like to emphasize that the result is about functions of period $pN$ satisfying Property \ref{PROPERTY-U} under the assumption that the same is satisfied by function of period $N$. This theorem highlights an approach to prove Conjecture \ref{CONJ:ERDOS} via an descent argument on the number of prime divisors of $N$. We assume that the conjecture is true $\bmod{~}N$ and we would like to prove it is true $\bmod{~}pN$ whenever the prime $p$ does not divide $N$. With a mild assumption, Theorem \ref{TH:ERD-INDUCTION} shows that it is possible provided the prime $p$ is large enough. With this theorem and an additional calculation, we obtain the following corollary :
\begin{Cor}\label{COR:NEW-FAMILY-ERDOS}
	Let $N$ be a positive squarefree number congruent to $ 3 \bmod{~} 4$. Then for all primes 
	\[p > 2^{\omega(N)} \prod_{\substack{q \mid N\\q \text{ prime }}} ((q+1)^2+1),\] 
	the Erd\"{o}s conjecture is true for period $ pN $.
\end{Cor}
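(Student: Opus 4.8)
The plan is to obtain the corollary as essentially a single application of Theorem~\ref{TH:ERD-INDUCTION}, with the congruence $N \equiv 3 \pmod 4$ furnishing the base case. The first, purely bookkeeping, step is to notice that the two hypotheses on $p$ are identical: since $(q+1)^2 + 1 = q^2 + 2q + 2$, the bound $p > 2^{\omega(N)}\prod_{q \mid N}((q+1)^2+1)$ of the corollary is exactly the bound of Theorem~\ref{TH:ERD-INDUCTION}. In particular $p$ exceeds every prime divisor of $N$, so $p \nmid N$. Now let $g$ be an arbitrary Erd\"{o}s function of period $pN$ and set $f(n) := g(pn)$. Periodicity of $g$ gives $f(n+N) = g(pn + pN) = f(n)$, so $f$ has period dividing $N$; moreover $f(N) = g(pN) = 0$, while for $1 \le n \le N-1$ we have $pn \not\equiv 0 \pmod{pN}$, whence $f(n) = g(pn) \in \{-1,1\}$. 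Thus $f$ is an Erd\"{o}s function of period exactly $N$ satisfying $g(pn) = f(n)$, and Theorem~\ref{TH:ERD-INDUCTION} will deliver that $g$ satisfies Property~\ref{PROPERTY-U} (so $L(1,g) \neq 0$) as soon as we know that $f$ satisfies Property~\ref{PROPERTY-U}.

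It therefore remains to supply the base case, which is the additional calculation and the step I expect to be the main obstacle: every Erd\"{o}s function $f$ of period $N$ with $N$ squarefree and $N \equiv 3 \pmod 4$ satisfies Property~\ref{PROPERTY-U}. Here I would return to the argument of Ram and Saradha \cite{RAM-SARADHA} establishing $L(1,f) \neq 0$ for $N \equiv 3 \pmod 4$ and verify that it in fact yields the sharper conclusion $L(1,f) \notin \mathbb{Q}\langle \log d : d \mid N\rangle$. Concretely, writing $L(1,f)$ as a $\overline{\mathbb{Q}}$-linear combination of logarithms $\log(1 - \zeta_N^a)$ of cyclotomic numbers via Plancherel's theorem, as in Okada \cite{OKADA}, I would split off the part lying in $\mathbb{Q}\langle \log d : d \mid N\rangle$ and argue that the complementary, transcendental part is nonzero. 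The hypothesis $N \equiv 3 \pmod 4$ is precisely what controls this complementary part, through the contribution of the odd characters in the decomposition (equivalently, through the behaviour under complex conjugation), and Baker's theorem then certifies that a nonzero such part cannot be absorbed back into $\mathbb{Q}\langle \log d : d \mid N\rangle$. Establishing this non-cancellation uniformly over all Erd\"{o}s functions $f$ is the delicate point.

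One further case must be addressed, since the corollary concerns every Erd\"{o}s function of period $pN$, whereas the reduction above lands inside the scope of Theorem~\ref{TH:ERD-INDUCTION} only when the associated $f$ has mean zero. If $\sum_{m=1}^{pN} g(m) \neq 0$ the series $L(1,g)$ diverges and non-vanishing is immediate. In the convergent case $\sum_{m=1}^{pN} g(m) = 0$, the argument of the first paragraph applies whenever $\sum_{n=1}^{N} g(pn) = 0$; when instead $\sum_{n=1}^{N} g(pn) \neq 0$, the function $f$ has a pole at $s=1$ and Property~\ref{PROPERTY-U} for $f$ is not available. In this remaining case I would argue directly: the multiples-of-$p$ part of $g$ contributes a term to $L(1,g)$ that is a nonzero rational multiple of $\log p$, and by isolating the total coefficient of $\log p$ and invoking Baker's theorem together with the lower bound on $p$ --- the same mechanism underlying Theorem~\ref{TH:ERD-INDUCTION} --- one shows this contribution cannot cancel. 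Combining the three cases yields the corollary.
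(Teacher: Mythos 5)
Your proposal is correct and follows essentially the same route as the paper: reduce to period $N$ via Theorem~\ref{TH:ERD-INDUCTION} (after noting $(q+1)^2+1=q^2+2q+2$), dispose of the case $\sum_{n=1}^{N}g(pn)\neq 0$ by the nonzero coefficient of $\log p$ (the paper's Corollary~\ref{COR:LOGP0-CONSEQUENCE}, which incidentally needs no lower bound on $p$), and supply the base case by upgrading the Ram--Saradha argument for $N\equiv 3\bmod 4$ to Property~\ref{PROPERTY-U} via the nonvanishing odd ($\pi$-) component. The paper likewise leaves that last upgrade to a citation of \cite[Theorem 7]{RAM-SARADHA}, so your level of detail matches its proof.
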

From this corollary, we obtain a new family of natural numbers $ N \equiv 1 \bmod{~} 4 $ for which Erd\"{o}s conjecture is true. This is done by choosing primes $p \equiv 3 \bmod {~} 4 $ in the above corollary. Such a result was not proved earlier due to the mysterious nature\footnote{The numbers $A(r,a)$ appearing in the theorem are not necessarily in the reduced form and can be zero.} of the rational numbers appearing in the Okada's theorem \cite[Theorem 10]{OKADA}. 

By a parity argument, it is possible to show that Erd\"{o}s Conjecture is vacuously true whenever $N$ is even, as there are no Erd\"{o}s functions $f$ for which the Dirichlet series $L(s,f)$ converges at $s=1$. Hence it is more instructive to replace the condition $f(N) = 0$ with $f(N) \in \{\pm 1\}$ for periodic functions $f$ of period $N$ provided $N$ is even. With this, we state our next result.
\begin{Th}\label{TH:4p}
Let $q$ be an odd prime and $f$ be an arithmetic function of period $4q$ taking values in $\{ \pm 1 \}$. Then $\sum\limits_{n = 1}^{\infty}\frac{f(n)}{n} \neq 0$ whenever it converges.  
\end{Th}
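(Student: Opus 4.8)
The plan is to convert the vanishing of $L(1,f)$ into a statement about a linear form in logarithms and then exploit the rigidity of the values $f(a)\in\{\pm 1\}$, which hold at \emph{every} residue (in particular $f(4q)\neq 0$). Writing $\zeta=e^{2\pi i/4q}$ and using the Fourier expansion of $f$ together with $\sum_{n\ge 1}z^{n}/n=-\log(1-z)$, one obtains
\[
L(1,f)=-\sum_{a=1}^{4q-1}\widehat f(a)\,\log\!\left(1-\zeta^{a}\right),
\]
where the convergence hypothesis forces the mean-zero condition $\sum_{a=1}^{4q}f(a)=0$ and kills the term $a=0$. Since $f$ is real valued we have $L(1,f)\in\R$, and splitting $f$ into its even and odd parts $f^{\pm}(a)=\tfrac12\big(f(a)\pm f(4q-a)\big)$ separates this into a logarithmic part $L(1,f^{+})$, a $\mathbb{Q}$-linear combination of the positive reals $\log|1-\zeta^{a}|=\log\!\big(2\sin\tfrac{\pi a}{4q}\big)$, and an angular part $L(1,f^{-})=\tfrac{\pi}{8q}\sum_{a=1}^{4q-1}f(a)\cot\tfrac{\pi a}{4q}$, which is a rational multiple of $\pi$. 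First I would record that, since $-1$ is multiplicatively independent from positive reals, the only multiplicative relation among $\{-1\}\cup\{2\sin(\pi a/4q)\}$ involving $-1$ is trivial; hence Theorem~\ref{TH:BAKER} forces $L(1,f)=0$ to split into the separate vanishing of the angular part and of the logarithmic part.

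Next I would feed in the primitive-character decomposition that drives the paper. Grouping the frequencies $a$ by the exact order $d=4q/\gcd(a,4q)$ of $\zeta^{a}$ organizes both parts by the divisors $d\in\{2,4,q,2q,4q\}$ and expresses each level through primitive Dirichlet characters modulo $d$. The expected multiplicative relations among the $1-\zeta_{d}^{b}$ are the norm and distribution relations: at the prime-power levels $d=2,4$ they collapse to $\log 2$, at $d=q$ the primitive product contributes $\log q$, while the levels $d=2q,4q$ contribute no rational logarithm. A second application of Theorem~\ref{TH:BAKER} then turns $L(1,f^{+})=0$ into the assertion that, after absorbing these relations into the $\log 2,\log q$ coefficients, every primitive even component of $f$ at the levels $q,2q,4q$ must vanish and the residual coefficients of $\log 2$ and $\log q$ must cancel; likewise $L(1,f^{-})=0$ forces the odd components to vanish. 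At the level $d=q$ one may invoke Theorem~\ref{TH:BBW} to control the genuinely cyclotomic contributions, exactly as it settles the prime case.

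The crux is the final step, where the constraint $f(a)\in\{\pm 1\}$ is brought to bear. Because $4=2^{2}\,\|\,4q$, the nontrivial character $\chi_{-4}$ modulo $4$ enters the decomposition and couples the mod-$4$ and mod-$q$ data; the vanishing conditions above then read as linear equations, with integer coefficients, in the values $f(a)$. The plan is to show that at least one of these equations equates a sum of an \emph{odd} number of terms $f(a)\in\{\pm 1\}$ to zero, which is impossible because such a sum is an odd integer. Thus the vanishing of $L(1,f)$ is obstructed by a parity phenomenon created by the factor $4$, in the same spirit as the case $N\equiv 3\bmod 4$. \textbf{The main obstacle} I anticipate is twofold: first, verifying that for the composite modulus $4q$ the only relations among the cyclotomic numbers $1-\zeta_{4q}^{a}$ are the expected norm and distribution relations, so that Theorem~\ref{TH:BAKER} yields a clean finite system; and second, the parity bookkeeping itself—tracking the coupling between $\chi_{-4}$ and the characters modulo $q$ precisely enough to exhibit the odd-cardinality equation that rules out a $\pm 1$-valued, mean-zero solution.
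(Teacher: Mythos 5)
Your reduction of $L(1,f)=0$ to a finite system of rational linear equations via Baker's theorem, the even/odd splitting, and the organisation by conductor is sound and is essentially the same skeleton as the paper's Theorem \ref{TH:NEW-NECESSARY-SUFFCIENT-VANISHING-L1F} specialised to $N=4q$. The gap is in the endgame, which you have left as a plan: you propose to finish by exhibiting one equation that equates a sum of an odd number of values $f(a)\in\{\pm 1\}$ to zero. That parity obstruction does not exist in general here, and the paper's proof is structured precisely around the case where it fails. Concretely, the conditions $(V_D)$ force $f_4=\pm\,\sigma_2^{-1}\Proj_q^{2q}(f_2)$ (an integrality argument in the group ring, via Proposition \ref{PROP:IMPRIMITIVE-INDUCTION}, since $(1-\tau_q)$ applied to a $\{\pm1\}$-valued function has even coefficients while the denominator $4$ must clear). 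In the case $f_4=+\sigma_2^{-1}\Proj_q^{2q}(f_2)$ the system of restricted-sum equations does yield $|M_4|=1$, and a parity contradiction of the kind you describe closes that branch. But in the case $f_4=-\sigma_2^{-1}\Proj_q^{2q}(f_2)$ the same system gives $M_1=0$ and $M_4=\pm 2$, values that are perfectly attainable by $\pm1$-valued mean-zero functions; no equation with an odd number of $\pm1$ terms appears. The paper must then argue further: imprimitivity of $f_1$ modulo $4q$ and of $\Proj_4^{4q}(f_1)$ modulo $4$ force $f_1$ to be supported on characters of conductor $q$, and feeding this back into $(V_4)$ produces a constant that must simultaneously be a half-integer and equal to $0$ --- a contradiction of a different nature.

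A second, related point: your first anticipated obstacle (that the only relations among the $1-\zeta_{4q}^a$ are the norm/distribution ones) is not merely technical bookkeeping. The paper's Example \ref{EXAMPLE:0-UNIT-CONTRIBUTION} exhibits a $\{\pm1\}$-valued function of period $12$ for which every ``unit'' (primitive) contribution vanishes and $L(1,f)$ lies entirely in $\mathbb{Q}\langle \log 2,\log 3\rangle$, and Tengely's period-$36$ example shows $L(1,f)=0$ can actually occur for $\{\pm1\}$-valued $f$. So any correct argument must extract more from the modulus $4q$ than generic parity plus linear independence of logarithms; as it stands, your proposal does not identify what that extra input is, and the step you flag as the crux is exactly where the proof lives.
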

Using our method, we can also study similar questions whenever the period is a power of $2$. Our results rely on a new vanishing condition of $L(1,f)$ which we describe below : 
\subsection{A new vanishing condition for $L(1,f)$}
We recall that the rational valued arithmetic functions of period $ N $ which are supported on the set of co-prime residue classes modulo $ N $ are called Dirichlet type functions and we denote the set of such functions by $ \mathbf{F_D}(N) $. Furthermore, if we set $G=\text{Gal}(\mathbb{Q}(\zeta_N)/\mathbb{Q})$ where $\zeta_N = e^{2 \pi i / N}$, then we can define the action of $\sigma_a$ (given by $\sigma_a(\zeta_N) =\zeta_N^{a}$) on $\mathbf{F_D}(N)$ as follows $\sigma_a^{-1}f(n):= f(an)$ and  this action can be extended to the group ring $\mathbb{Q}[G]$ on $\mathbf{F_D}(N)$. 
\subsubsection{Definitions and Notations for Theorem \ref{TH:NEW-NECESSARY-SUFFCIENT-VANISHING-L1F}}
To describe the necessary and sufficient condition for the vanishing of $L(1,f)$, we require the following notations.
\begin{enumerate}
    \item For a prime $ p $, $ v_p $ denotes the normalised $ p $-adic valuation on the non-zero rational numbers.
    \item For every divisor $ d $ of $ N $, we define the function $ f_d(n): = f(dn)\chi_{0,N/d}(n)$ with $ \chi_{0,N / d} $ denoting the principal character $ \bmod{~} N / d $.
    \item We define $\mathbf{1}_p^{N}$ to the indicator function of integers $n$ satisfying $v_p(n) < v_p(N)$.   
\end{enumerate}
We now extend the definition of primitivity of characters to Dirichlet type functions : 
\par
\begin{Def}\label{DEF:PRIMITIVE}
	Let $f$ be a Dirichlet type arithmetic function of period $N$. We say $f$ is primitive $ \bmod{~} N$ if there exists a primitive Dirichlet character $\chi \bmod{~}N$ such that $ \sum_{a=1}^N f(a) \chi(a)\neq 0$.
\end{Def}
We say that the function $f$ is imprimitive  $ \bmod{~} N $ if it is not primitive $\bmod{~} N$. We also define ``projection'' functions. We can identify $\mathbf{F_D}(d)$ as a subspace of $\mathbf{F_D}(N)$ and we define the operator $ \Proj_d^N : \mathbf{F_D}(N) \to \mathbf{F_D}(d) $ as the projection under this identification. More precisely, we give the following definition.  
\begin{Def}\label{DEFN:INTRO-PROJ}
Let $f$ be a Dirichlet type function of period $N$ admitting the following character decomposition. 
\[ f = \sum_{ \chi \bmod{~} N} c_\chi \chi \] 
For a divisor $d$ of $N$, the function $\Proj_{d}^N(f)$ is a Dirichlet type function of period $d$ defined\footnote{We consider characters $\chi \bmod{~} d$ and the induced characters $\chi \bmod{~}N $ to be different and hence we multiply with the principal character $\chi_{0,N}$ to characters $\bmod{~} d$} as follows : 
\[ \Proj_d^N(f) = \sum_{ \chi \bmod{~} d} c_{\chi\chi_{0,N}}~\mathbf{\chi} \] 
\end{Def}
We now describe a necessary and sufficient condition for the vanishing of $L(1,f)$ in terms of imprimitivity of a certain Dirichlet type function $\bmod{~} d$, for every divisor $d$ of $N$.
\begin{Th}\label{TH:NEW-NECESSARY-SUFFCIENT-VANISHING-L1F}
Let $f$ be a rational valued arithmetic function of period $N$ such that the Dirichlet series $L(s,f)$ converges at $s=1$. Then $L(1,f) = 0 $ if and only if the following conditions are satisfied : 
\newline
For \textbf{every divisor} $D$ of $N$, the arithmetic function :
\begin{equation}\label{EQN:INIT-COND-DIVISORS}
	\sum_{d \mid D} \frac{1}{d} \prod_{\substack{p \mid N/d\\p \nmid N/D}}\big(1-\frac{\Frob{p}^{-1}}{p}\big) \Proj_{N/D}^{N/d}(f_{d}) \text{ is } \textbf{imprimitive} \bmod{~}N/D. \tag{$V_D'$}
\end{equation}
For \textbf{every prime divisor} $p$ of $N$, we have : 
\begin{equation}\label{EQN:COND-PRIMES}
	\sum_{d \mid N} (\sum_{a=1}^{N/d}f_d(a)) \bigg(v_p(d) - \frac{\mathbf{1}_p^N(d)}{p-1}\bigg) = 0 \tag{$ V_p^{\text{pr}} $}
\end{equation}
\end{Th}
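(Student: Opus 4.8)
The plan is to evaluate $L(1,f)$ as an explicit linear combination of special values of primitive Dirichlet $L$-functions and of logarithms of the primes dividing $N$, and then to read off the vanishing conditions by invoking the linear independence of these quantities supplied by Theorem \ref{TH:BBW} and Baker's theorem \ref{TH:BAKER}. First I would record that convergence of $L(s,f)$ at $s=1$ forces $\sum_{a=1}^{N} f(a)=0$, and decompose $f$ according to the value of $(n,N)$. Writing $f^{(d)}$ for the restriction of $f$ to the residues with $(n,N)=d$ and putting $n=dn'$, one has $f^{(d)}(dn')=f_d(n')$ for $(n',N/d)=1$, so that $L(s,f^{(d)})=d^{-s}L(s,f_d)$ with $f_d\in\mathbf{F_D}(N/d)$. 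Since $\sum_{d\mid N} f^{(d)}=f$, at the level of finite parts at $s=1$ (the polar parts cancelling precisely because $\sum_a f(a)=0$) this yields
\[ L(1,f)=\sum_{d\mid N}\frac{1}{d}\,\widetilde{L}(1,f_d), \]
where $\widetilde{L}(1,f_d)$ is the regularised value attached to the Dirichlet type function $f_d$ of period $N/d$.

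Next I would expand each $f_d$ in Dirichlet characters modulo $N/d$ and pass from imprimitive characters to their primitive inducing characters. If $\chi$ has conductor $c\mid N/d$, then $L(s,\chi)=L(s,\chi^{*})\prod_{p\mid N/d,\,p\nmid c}(1-\chi^{*}(p)p^{-s})$; since $\sigma_p^{-1}$ multiplies the $\chi$-component of a Dirichlet type function by $\chi(p)$, the Euler factor at $s=1$ is realised on functions by the operator $(1-\sigma_p^{-1}/p)$, and re-indexing the resulting primitive character modulo $c=N/D$ is exactly the operator $\Proj_{N/D}^{N/d}$. Collecting, for each divisor $D$ of $N$, all contributions of conductor $N/D$ assembles precisely into the function
\[ g_{N/D}:=\sum_{d\mid D}\frac{1}{d}\prod_{\substack{p\mid N/d\\ p\nmid N/D}}\Big(1-\frac{\sigma_p^{-1}}{p}\Big)\Proj_{N/D}^{N/d}(f_d), \]
so that the coefficient of $L(1,\chi)$ for a primitive $\chi$ modulo $N/D$ is a fixed multiple of $\sum_{a}g_{N/D}(a)\chi(a)$. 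The principal characters are treated separately: the Laurent expansion of $\zeta(s)\prod_{p\mid m}(1-p^{-s})$ at $s=1$ produces the finite term $\sum_{p\mid m}\frac{\log p}{p-1}$, present exactly when $p\mid N/d$, i.e.\ when $\mathbf{1}_p^{N}(d)=1$, while the factor $d^{-s}=d^{-1}(1-(s-1)\log d+\cdots)$ acting against the pole contributes $-\log d=-\sum_p v_p(d)\log p$. Summing over $d$, the total coefficient of $\log p$ is, up to a global constant, $\sum_{d\mid N}\big(\sum_{a}f_d(a)\big)\big(v_p(d)-\mathbf{1}_p^{N}(d)/(p-1)\big)$.

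Finally I would invoke linear independence. By Theorem \ref{TH:BBW} together with Baker's theorem \ref{TH:BAKER}, the values $L(1,\chi)$ for the nonprincipal primitive characters of conductor dividing $N$, together with the logarithms $\log p$ for $p\mid N$, are linearly independent over $\mathbb{Q}$; equivalently, none of these primitive $L$-values lies in $\mathbb{Q}\langle\log p:p\mid N\rangle$ and there are no relations across distinct conductors. Consequently $L(1,f)=0$ if and only if every coefficient in the above expansion vanishes. The vanishing of the coefficients of the primitive $L$-values says exactly that $\sum_a g_{N/D}(a)\chi(a)=0$ for every primitive $\chi$ modulo $N/D$, that is, that $g_{N/D}$ is imprimitive modulo $N/D$ — this is condition \eqref{EQN:INIT-COND-DIVISORS} for every divisor $D$ — while the vanishing of the coefficient of each $\log p$ is condition \eqref{EQN:COND-PRIMES}.

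The main obstacle will be the exact bookkeeping in the middle step: one must verify that the weights $1/d$, the Euler operators $(1-\sigma_p^{-1}/p)$, the projections $\Proj_{N/D}^{N/d}$, and the two Laurent expansions at $s=1$ combine to reproduce the function $g_{N/D}$ and the precise valuation weights $v_p(d)-\mathbf{1}_p^{N}(d)/(p-1)$ with no stray terms. The second delicate point is the required linear independence — in particular that no nonprincipal primitive $L(1,\chi)$ is a rational combination of the $\log p$ and that distinct conductor levels do not interact — which must be derived cleanly from the rational case of Theorem \ref{TH:BBW}.
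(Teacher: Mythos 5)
Your overall architecture coincides with the paper's: decompose $L(s,f)=\sum_{d\mid N} d^{-s}L(s,f_d)$, extract the principal-character contribution from the Laurent expansion at $s=1$ to obtain the prime-logarithm conditions \eqref{EQN:COND-PRIMES}, and organise the non-principal contribution by conductor to obtain the imprimitivity conditions \eqref{EQN:INIT-COND-DIVISORS}. The bookkeeping you flag in your last paragraph does work out as you predict; it is the content of Lemma \ref{LEM:PRIMPART-ALMOST-SAME-PROJPART} and of equation \eqref{EQN:REARRANGED-PRIM-PART}.

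The genuine gap is in your linear-independence step, and it is exactly the point the paper's machinery is built to circumvent. In your expansion the coefficient of an individual primitive $L(1,\chi)$ is (a multiple of) $\sum_a g_{N/D}(a)\chi(a)$, which is an algebraic number in $\mathbb{Q}(\chi)$, not a rational number. Hence $\mathbb{Q}$-linear independence of the set $\{L(1,\chi)\}\cup\{\log p\}$, even if established, does not let you conclude that each such coefficient vanishes. The stronger statement you would need, $\overline{\mathbb{Q}}$-linear independence of the primitive $L(1,\chi)$ across conductors, is false: for every odd primitive $\chi$ one has $L(1,\chi)\in\overline{\mathbb{Q}}\,\pi$, so any two odd $L$-values are $\overline{\mathbb{Q}}$-proportional. (This is precisely why the paper stresses that it does not use $\overline{\mathbb{Q}}$-linear independence of the $L(1,\chi)$.) The repair is to group all characters of a fixed conductor $N/D$ together: the block $\mathcal{D}_{N/D}(N/D,g_{N/D})$ is Galois-stable, hence a rational-valued Dirichlet type function, and $L(1,f)$ becomes the sum over $D$ of the $L$-values of these blocks plus the prime-logarithm term. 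One first separates the prime logarithms from the unit logarithms via Lemma \ref{LEM:ALG-UNITS-INDEPENDENT-PRIMES-LOG} (a consequence of Baker's theorem), then applies the injectivity of $f\mapsto L(1,f)$ on rational-valued Dirichlet type functions (Theorem \ref{TH:BBW}) together with the direct sum decomposition \eqref{EQN:DIRECT-DECOMP-DIRITYPE-FUNCTION} and Corollary \ref{COR:SAME-DPRIM-VAL} to conclude that each conductor block vanishes as a function; only then does orthogonality convert this into the vanishing of each $\sum_a g_{N/D}(a)\chi(a)$, i.e. into imprimitivity. Without this grouping your final deduction does not go through.
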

In \eqref{EQN:INIT-COND-DIVISORS}, $d$ varies over all the divisors of $D$ and $p$ varies over certain prime divisors of $N/d$. By convention, if $ \frac{N}{D} \equiv 2 \bmod{~} 4 $, the sum \eqref{EQN:INIT-COND-DIVISORS} is imprimitive $\bmod{~}N/D$, as there are no primitive characters of conductor congruent to $ 2 \bmod{~} 4 $.

From our definition, we have $ \Proj_N^N(f_1) = f_1 $ and when we set $D=1$ at \eqref{EQN:INIT-COND-DIVISORS}, we arrive at the following corollary. 
\begin{Cor}\label{COR:N-prim-nonvanishing}
Let $f$ be a rational valued periodic function of period $N$. If there exists a \textit{primitive} character $\chi \bmod{~}N$ such that $\sum\limits_{a=1}^Nf(a) \chi(a) \neq 0$, then $L(1,f) \neq 0$ whenever the infinite sum converges.
\end{Cor}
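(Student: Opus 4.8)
The plan is to derive this corollary directly from the necessary direction of Theorem \ref{TH:NEW-NECESSARY-SUFFCIENT-VANISHING-L1F} by examining the single condition produced by the divisor $D=1$. Assuming the infinite sum converges, so that the theorem applies, the vanishing $L(1,f)=0$ forces \emph{all} of the conditions \eqref{EQN:INIT-COND-DIVISORS} and \eqref{EQN:COND-PRIMES} to hold simultaneously; in particular it forces \eqref{EQN:INIT-COND-DIVISORS} at $D=1$. I would therefore compute this one condition explicitly and show it is incompatible with the hypothesis of the corollary, so that $L(1,f)=0$ cannot occur.

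First I would unwind \eqref{EQN:INIT-COND-DIVISORS} at $D=1$. The only divisor $d$ of $D=1$ is $d=1$, so the outer sum collapses to the single term with $1/d=1$. For that term the inner product ranges over primes $p$ with $p \mid N/d = N$ and $p \nmid N/D = N$, an empty condition, so the product equals $1$. Finally $\Proj_{N/D}^{N/d}(f_{d}) = \Proj_N^N(f_1) = f_1$. Hence condition $(V_1')$ asserts precisely that $f_1$ is imprimitive $\bmod{~}N$. Note that $f_1 = f\,\chi_{0,N}$ is supported on the residues coprime to $N$, so $f_1 \in \mathbf{F_D}(N)$ and Definition \ref{DEF:PRIMITIVE} indeed applies to it.

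It then remains to translate imprimitivity of $f_1$ into a statement about $f$. By Definition \ref{DEF:PRIMITIVE}, $f_1$ being imprimitive $\bmod{~}N$ means $\sum_{a=1}^N f_1(a)\chi(a) = 0$ for every primitive character $\chi \bmod{~} N$. Since $f_1 = f\,\chi_{0,N}$ and any character $\chi \bmod{~}N$ satisfies $\chi(a)\chi_{0,N}(a) = \chi(a)$ (both sides vanish when $(a,N)>1$ and agree otherwise), we get $\sum_{a=1}^N f_1(a)\chi(a) = \sum_{a=1}^N f(a)\chi(a)$ for every $\chi \bmod{~}N$. Thus $(V_1')$ is equivalent to the vanishing of $\sum_{a=1}^N f(a)\chi(a)$ for \emph{all} primitive $\chi \bmod{~} N$. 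Contrapositively, if some primitive character $\chi \bmod{~}N$ gives $\sum_{a=1}^N f(a)\chi(a)\neq 0$, then $(V_1')$ fails, and by Theorem \ref{TH:NEW-NECESSARY-SUFFCIENT-VANISHING-L1F} we conclude $L(1,f)\neq 0$.

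There is no genuine analytic obstacle here, since the argument is purely a specialization of the already-established equivalence. The only step requiring care is the bookkeeping in the second paragraph: verifying that the empty product and the singleton outer sum really do collapse $(V_1')$ to the primitivity test for $f$ (equivalently $f_1$) modulo $N$. Because the failure of even one necessary condition already precludes vanishing, none of the remaining conditions \eqref{EQN:INIT-COND-DIVISORS} for $D\neq 1$ nor \eqref{EQN:COND-PRIMES} need to be checked.
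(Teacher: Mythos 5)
Your proposal is correct and is exactly the paper's own derivation: the author obtains the corollary by setting $D=1$ in \eqref{EQN:INIT-COND-DIVISORS}, noting the sum collapses to the single term $\Proj_N^N(f_1)=f_1$ with an empty Euler-type product, so that $L(1,f)=0$ would force $f_1$ (equivalently $f$, since $\sum_a f_1(a)\chi(a)=\sum_a f(a)\chi(a)$ for any $\chi \bmod N$) to be imprimitive $\bmod N$. Your write-up just makes explicit the bookkeeping the paper leaves to a one-line remark before the corollary.
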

When $f$ is an even function (that is $f(-n)= f(n)$ for all integers $n$), the above corollary can be obtained by the $\bQ$-linear independence of $L(1,\chi)$ as $\chi$ varies over the non-trivial even characters $\bmod{~} N$ (See \cite[Theorem 3]{BAKER-BIRCH-WIRSING}) using the method we describe. However, for the case when $f$ is not even, the result is completely new. Moreover, we are not using the $\bQ$-linear independence of $L(1,\chi)$ in our proof.

According to Definition \ref{DEF:PRIMITIVE}, to understand the imprimitivity of a given Dirichlet type function $f$ of period $N$, we have to compute $\sum_{a=1}^Nf(a)\chi(a)$ and the value will lie in $\mathbb{Q}(\chi)$. It is desirable to do the computations in the smallest field containing the values of $f$. Given a rational valued Dirichlet type function $f$ of period $N$, we associate an element $\mathcal{P}(f) \in \mathbb{Q}[G]$ as follows : 
\begin{equation}\label{EQN:POLY-ASSOCIATE}
	\mathcal{P}(f) := \sum_{(a,N)=1} f(a) \sigma_a.  
\end{equation}
In the above, the sum runs over a finite set of complete representatives in $ (\mathbb{Z}/N\mathbb{Z})^* $. Henceforth we identify $f$ with $\mathcal{P}(f)$. We prove a Proposition which helps us to identify functions $f$ which are imprimitive $\bmod{~} N$, and also describe $\mathcal{P}(\Proj(f))$ in terms of $\mathcal{P}(f)$. With the help of Proposition \ref{PROP:POLY-NPRIM-ANNHILATE}, for each divisor $D$ of $N$, we can re-write \eqref{EQN:INIT-COND-DIVISORS} as follows :
\begin{equation}\label{EQN:COND-DIVISORS:INTERMS-GROUPRING}
	\prod_{p \mid N / D} (1-\tau_p^{(N/D)}) \big(  \sum_{d \mid D} \frac{\varphi(N/D)}{d\varphi(N/d)} \sideset{}{'}\prod_{p}(1-\frac{\Frob{p}^{-1}}{p}) \mathcal{P}(f_{d})\big\vert_{\mathbb{Q} (\zeta_{N / D})} \big) = 0. \tag{$V_D$ } 
\end{equation}
Here $\tau_p^{(N/D)}$ is a generator of the Galois group $\Gal{\mathbb{Q}(\zeta_{N/D})/\mathbb{Q}(\zeta_{N/Dp})}$ and we are restricting  $\mathcal{P}(f_d)$ to $\mathbb{Q}[\Gal{\mathbb{Q}(\zeta_{N/D})/\mathbb{Q}}]$ by restricting each automorphism accordingly. Also $\varphi$ denotes the Euler phi function. We use this equation along with \eqref{EQN:COND-PRIMES} to study the non-vanishing of $L(1,f)$ for periodic functions $f$ and prove Theorem \ref{TH:4p} by applying this condition. 
\par

\par
The techniques used in the main theorem can be applied for understanding the structure of $ L(s,f) $ having finitely many zeroes at positive integers. The non-vanishing for special values of Dirichlet series $L(k,f)$ for rational valued periodic arithmetic functions and for integers $k>1$ was first initiated by P. Chowla and S. Chowla in \cite{CHOWLA-CHOWLA}. Later on, Milnor \cite{MILNOR} using Kubert's relation generalised the conjecture when the period is not a prime.  
\begin{Conj}[Milnor]\label{CONJ:CHOW-MIL} For positive integers $q,k$ greater than one, the values $\zeta(k,a/q)$ with $a$ satisfying $1 \le a \le q$ and $(a,q)=1$ are linearly independent over $\mathbb{Q}$.  
\end{Conj}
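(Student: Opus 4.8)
The statement is Milnor's conjecture (the Chowla--Milnor conjecture), which is an open problem; what follows is therefore an outline of the approach that the methods of this paper suggest, together with the point at which it stalls. The plan is to recast the conjecture as a non-vanishing statement for $L(k,f)$. Writing $n=mq+a$ with $m\ge 0$ and $1\le a\le q$, periodicity gives the identity
\[ L(k,f)=\sum_{n\ge 1}\frac{f(n)}{n^{k}}=\sum_{a=1}^{q}f(a)\sum_{m\ge 0}\frac{1}{(mq+a)^{k}}=\frac{1}{q^{k}}\sum_{a=1}^{q}f(a)\,\zeta\!\left(k,\tfrac{a}{q}\right), \]
valid for any periodic $f$ of period $q$ once $k>1$, where absolute convergence makes every manipulation legitimate. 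Hence a $\mathbb{Q}$-linear relation $\sum_{(a,q)=1}\lambda_a\,\zeta(k,a/q)=0$ is exactly the same as $L(k,f)=0$ for the Dirichlet type function $f$ supported on the coprime residues with $f(a)=\lambda_a$. So the conjecture is equivalent to the assertion that $L(k,f)\neq 0$ for every nonzero $f\in\mathbf{F_D}(q)$ and every integer $k>1$, which is the higher-weight analogue of the $k=1$ problem treated here.

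First I would run the primitive-character decomposition of the paper in weight $k$: writing $f=\sum_{\chi\bmod q}c_\chi\chi$ and stripping Euler factors to pass to primitive characters, $L(k,f)$ becomes a $\mathbb{Q}(\zeta_q)$-linear combination of the values $L(k,\chi)$ with $\chi$ primitive of conductor dividing $q$. These values split by parity. When $\chi(-1)=(-1)^{k}$, the functional equation expresses $L(k,\chi)$ as an explicit algebraic multiple of $\pi^{k}$ (through a generalized Bernoulli number and a Gauss sum), so the transcendence of $\pi$ controls that part of the combination. The opposite-parity values $L(k,\chi)$ with $\chi(-1)=(-1)^{k-1}$ are the genuinely difficult ones; they are, up to algebraic factors, rational combinations of polylogarithm values $\mathrm{Li}_{k}(\zeta)$ at roots of unity $\zeta$, and are not known to be expressible through $\pi^{k}$.

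The hard part --- indeed the reason the conjecture is open --- is the analytic input needed to conclude non-vanishing once the combination has been written in terms of primitive $L(k,\chi)$. At $k=1$ the decisive tool is Baker's theorem on linear forms in logarithms of algebraic numbers, which supplies the required $\mathbb{Q}$-linear independence of the cyclotomic logarithms. For $k\ge 2$ one would need the corresponding statement that $\pi^{k}$ together with the opposite-parity values $L(k,\chi)$ (equivalently, the relevant polylogarithm values at roots of unity) are linearly independent over $\bQ$, and no such theorem is available with present technology. Thus the approach reduces Milnor's conjecture to a higher-weight transcendence statement that is itself inaccessible.

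What the method does deliver unconditionally is the sub-case in which only matching-parity characters contribute, for instance $f$ even with $k$ even or $f$ odd with $k$ odd: there $L(k,f)$ is an algebraic multiple of $\pi^{k}$, and, by the transcendence of $\pi$, its vanishing reduces to a purely rational linear condition on generalized Bernoulli numbers, which can be analyzed by the linear-algebra techniques of the paper. The remaining obstruction lives entirely in the opposite-parity polylogarithmic contributions, and settling those in general is precisely the open transcendence problem above.
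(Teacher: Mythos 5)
You have correctly recognized that this statement is the Chowla--Milnor conjecture and is open: the paper offers no proof of it either, stating it only as Conjecture \ref{CONJ:CHOW-MIL} and using it as a hypothesis for Theorem \ref{TH:CHARACTERISATION-2PRIMES}. Your reformulation --- that the conjecture is equivalent to the injectivity of $f \mapsto L(k,f)$ on nonzero Dirichlet type functions of period $q$ --- is exactly the reading the paper adopts at the start of Section \ref{SEC:CHOWLA-MILNOR}, and your account of the parity split and of the inaccessible opposite-parity polylogarithmic contributions accurately locates the obstruction.
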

The above conjecture is often referred to as Chowla Milnor conjecture (see \cite{GUN-MURTY-RATH} and subsequent works). We prove the following theorem assuming this conjecture. 
\begin{Th}\label{TH:CHARACTERISATION-2PRIMES}
	Let $f$ be a rational valued function of period $ pq $ where $ p \text{ and } q $ are distinct primes. Suppose we have distinct positive integers $ k,l $ greater than one such that such that $ L(k,f) = L(l,f) = 0  $. Then 
	\[ L(s,f) = \bigg(c_1 \big(1-\frac{p^k}{p^s}\big)  \big(1-\frac{q^l}{q^s}\big) +  c_2\big(1-\frac{p^l}{p^s}\big)  \big(1-\frac{q^k}{q^s}\big) \bigg) \zeta(s),    \]
	for some rational numbers $ c_1 \text{ and } c_2 $, conditional to Conjecture \ref{CONJ:CHOW-MIL}. 
\end{Th}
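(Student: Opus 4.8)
The plan is to make the primitive--character decomposition of $L(s,f)$ explicit, to use the Chowla--Milnor conjecture to separate the contributions of the different conductors at the two points $s=k$ and $s=l$, and then to solve the small linear systems that remain. Starting from $L(s,f)=(pq)^{-s}\sum_{a=1}^{pq}f(a)\zeta(s,a/pq)$, with $\zeta(s,x)$ the Hurwitz zeta function, and sorting the summands by $(a,pq)\in\{1,p,q,pq\}$, the identity $\sum_{(a,m)=1}\chi(a)\zeta(s,a/m)=m^{s}L(s,\chi)$ together with the factorisation of induced $L$-functions into primitive ones --- exactly the bookkeeping encoded by $\mathcal{P}(f)$ and $\Proj$ --- yields, as an identity valid for all $s$,
\begin{equation*}
L(s,f)=\sum_{\chi\ \mathrm{prim.}\ pq}\alpha_\chi\,L(s,\chi)+\sum_{\psi\ \mathrm{prim.}\ p}A_\psi(s)\,L(s,\psi)+\sum_{\rho\ \mathrm{prim.}\ q}B_\rho(s)\,L(s,\rho)+\Phi(s)\,\zeta(s),
\end{equation*}
where $A_\psi(s)=\alpha_\psi(1-\psi(q)q^{-s})+\beta_\psi q^{-s}$ (here $\alpha_\psi$ comes from the unit part of $f$ and $\beta_\psi$ from the part of $f$ supported on multiples of $q$), $B_\rho(s)$ is the $p\leftrightarrow q$ analogue, and $\Phi(s)$ is a $\mathbb{Q}$-linear combination of $1,p^{-s},q^{-s},(pq)^{-s}$.

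The crux --- and the main obstacle --- is to pass from the two scalar equations $L(k,f)=0=L(l,f)$ to the vanishing of each coefficient in the display at $s=k$ and at $s=l$ separately. For this I would substitute $L(k,\xi)=m^{-k}\sum_{(a,m)=1}\xi(a)\zeta(k,a/m)$ for each primitive $\xi$ of conductor $m\mid pq$ and reduce every Hurwitz value to level $pq$ by the distribution relation $\sum_{j=0}^{t-1}\zeta(k,(x+j)/t)=t^{k}\zeta(k,x)$. An unconditional computation --- the transition operators $r^{k}I-\Pi$ with $\Pi$ a permutation matrix and $r\in\{p,q\}$ being invertible because $r^{k}$ is real and exceeds $1$ --- shows that all lower-level values lie in $\langle\zeta(k,a/pq):(a,pq)=1\rangle$, whose dimension is $\varphi(pq)$ by Chowla--Milnor. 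Rationality is kept, exactly as in the group-ring formulation of the main vanishing criterion, by grouping the coefficients into Galois orbits: for an orbit $O$ of $\xi_0$ the numbers $\sum_{\xi\in O}e_\xi\,\xi(a)=\mathrm{Tr}_{\mathbb{Q}(\xi_0)/\mathbb{Q}}\bigl(e_{\xi_0}\xi_0(a)\bigr)$ are rational, so $L(k,f)$ is a genuinely $\mathbb{Q}$-linear combination of the basis $\{\zeta(k,a/pq)\}$; Chowla--Milnor then forces those coefficients to vanish, and the invertibility of the change of basis between $\{\zeta(k,a/pq)\}$ and $\{L(k,\xi)\}$ makes this equivalent to the orbit-by-orbit vanishing of each $e_\xi$. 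This separation --- performed at both $s=k$ and $s=l$ and playing the role that Baker's theorem plays in the main theorem --- is the step requiring the most care, since Chowla--Milnor only furnishes $\mathbb{Q}$- (not $\overline{\mathbb{Q}}$-) linear independence of the Hurwitz values.

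Granting the separation, the conductor-$pq$ coefficient $\alpha_\chi$ is constant in $s$, so $\alpha_\chi=0$ for every primitive $\chi\bmod pq$. For a primitive $\psi\bmod p$ the two equations $A_\psi(k)=A_\psi(l)=0$ form a $2\times2$ system in $(\alpha_\psi,\beta_\psi)$ with determinant
\begin{equation*}
(1-\psi(q)q^{-k})q^{-l}-(1-\psi(q)q^{-l})q^{-k}=q^{-l}-q^{-k}\neq0,
\end{equation*}
because $k\neq l$; hence $\alpha_\psi=\beta_\psi=0$, and symmetrically (with determinant $p^{-l}-p^{-k}\neq0$) the conductor-$q$ coefficients vanish. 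This is exactly where the hypothesis $k\neq l$ is used. With all non-principal primitive contributions removed, $A_\psi\equiv B_\rho\equiv0$ and the display collapses to $L(s,f)=\Phi(s)\zeta(s)$, subject to the surviving constraints $\Phi(k)=\Phi(l)=0$.

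It remains to identify this two-point-vanishing family. As its parameters vary, $\Phi$ ranges over all Dirichlet polynomials $c_0+c_pp^{-s}+c_qq^{-s}+c_{pq}(pq)^{-s}$, a four-dimensional space, and the evaluation functionals $\Phi\mapsto\Phi(k)$ and $\Phi\mapsto\Phi(l)$ are independent since $(1,p^{-k},q^{-k},(pq)^{-k})$ and $(1,p^{-l},q^{-l},(pq)^{-l})$ are not proportional; hence $\{\Phi:\Phi(k)=\Phi(l)=0\}$ is two-dimensional. The functions $(1-p^{k}/p^{s})(1-q^{l}/q^{s})$ and $(1-p^{l}/p^{s})(1-q^{k}/q^{s})$ lie in it (one factor vanishes at $s=k$, the other at $s=l$) and are linearly independent --- equating their $1$-, $p^{-s}$- and $q^{-s}$-coefficients forces the scalars to $0$ when $k\neq l$ --- so they form a basis. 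Therefore $L(s,f)=\Phi(s)\zeta(s)=\bigl(c_1(1-p^{k}/p^{s})(1-q^{l}/q^{s})+c_2(1-p^{l}/p^{s})(1-q^{k}/q^{s})\bigr)\zeta(s)$ for some $c_1,c_2\in\mathbb{Q}$, as claimed.
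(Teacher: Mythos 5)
Your proposal is correct, but its middle portion follows a genuinely different route from the paper's. Both arguments share the same skeleton: decompose $L(s,f)$ by conductor, invoke Conjecture \ref{CONJ:CHOW-MIL} to separate the conductor blocks, and finish by identifying a two-dimensional space of admissible Dirichlet polynomials (your closing paragraph is essentially the paper's construction of the space $V_{pq}^{k,l}$ and the dimension count). The paper, however, does not start from the plain decomposition $L(s,f)=\sum_{d\mid pq}d^{-s}L(s,f_d)$: it invokes Lemma \ref{LEM:EXPLICIT-COMPUTATION-LKF-SPLIT}, writing $L(s,f)$ in terms of the transforms $\widetilde{f_d^{(k)}}=T_d^{(k)}(f)\chi_{0,N/d}$, which are built from the infinite sums $\sum_{m}f(dmn)m^{-k}$ and which already absorb the hypothesis $L(k,f)=0$, eliminating the conductor-$pq$ block at the outset. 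The Chowla--Milnor separation is then applied only at $s=l$, yielding imprimitivity of $\widetilde{f_p^{(k)}}$ and $\widetilde{f_q^{(k)}}$, and this is converted into pointwise constancy of $f$ on the relevant residue classes by a contraction estimate $C_d\le C_d\bigl((1-p^{-k})^{-1}(1-q^{-k})^{-1}-1\bigr)$ on the sup of the transform. You avoid both the infinite-sum transforms and the contraction estimate by applying the separation symmetrically at $s=k$ and $s=l$ and solving the $2\times2$ systems $A_\psi(k)=A_\psi(l)=0$, whose determinant $q^{-l}-q^{-k}$ makes the role of the hypothesis $k\neq l$ completely transparent; the price is that you must carry out the conductor separation by hand (distribution relations, invertibility of $r^kI-\Pi$, Galois-orbit rationality), which the paper outsources to its Section \ref{SEC:CHOWLA-MILNOR} modifications of Lemma \ref{LEM:PRIMPART-ALMOST-SAME-PROJPART} and Corollary \ref{COR:SAME-DPRIM-VAL}. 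Your attention to the fact that Chowla--Milnor only gives $\mathbb{Q}$-linear (not $\overline{\mathbb{Q}}$-linear) independence, resolved by summing over Galois orbits, is exactly the rationality mechanism the paper uses for the operators $\mathcal{D}_N(d,\_)$; the only point you assert without full detail is the invertibility of the transition matrix between $\{\zeta(k,a/pq)\}$ and $\{L(k,\xi)\}$, which is most quickly seen from character orthogonality at level $pq$ together with the nonvanishing of the Euler factors $1-\xi(r)r^{-k}$ (as $|\xi(r)r^{-k}|<1$).
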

From this theorem, one can conclude that the zeroes of $ L(s,f) $ for $\Re{s}>1$ are determined by the vanishing of a Dirichlet polynomial. We can generalise the statement to functions $ f $ having squarefree period $ N $, but at the moment, it is computationally taxing.  
\subsection{New ingredients and comparison with earlier methods}
In this section, we list the new ingredients that were not present in the earlier works and the necessary tools required.
\begin{enumerate}
    \item \textbf{Direct sum decomposition :} The idea behind the proof of Theorem \ref{TH:NEW-NECESSARY-SUFFCIENT-VANISHING-L1F} relies on a direct sum decomposition of the vector space $\mathbf{F}_D(N)$ consisting of Dirichlet type functions. The proof of Okada \cite{OKADA} relies on a rearrangement of certain values and construct functions $g$ appropriately such that $L(1,g) = 0$. We use a direct sum decomposition based on conductor of characters. We require to compute the value of each summand. The author along with S. Pathak considered a character decomposition in \cite[Section 4]{ABHISHEK-SIDDHI} but this was again used for rearrangement and matching the values $L(s,f)$ and $L(s,g)$ for different periods.  In the proofs of Theorem \ref{TH:NEW-NECESSARY-SUFFCIENT-VANISHING-L1F} as well as \cite[Theorem 10]{OKADA} re-arrangements and the use of Theorem \ref{TH:BBW} are unavoidable.
    \item \textbf{Group rings :} The idea of tracing back the vanishing of $L(1,f)$ to vanishing of certain elements in a group ring can be found in \cite{BHARADWAJ-THESIS} when $f$ is an odd function of period $N$. In this case, the mapping to group rings is possible as $L(1,f)$ is an algebraic multiple of $\pi$, and there only one relation among the Galois conjugates of the cotangents 
    $\{\sigma_a(\cot{2\pi/N})~:~ \sigma_a \in \text{Gal}(\mathbb{Q}(\zeta_N)/\mathbb{Q}) \}$, namely $\sigma_{-1}(\cot{2\pi/N}) = -\cot{2\pi/N}$ (see \cite{OKADA-COTANGENTS}, \cite{GIRSTMAIR}). For periodic functions $f$ without any constraints, we obtain a contribution from linear forms in logarithms of positive algebraic numbers. Our formalism avoids these expressions of linear forms in logarithms. There are some similarities in our approach and that of \cite{LEOPOLDT}, \cite{GIRSTMAIR-CHARCOODS}. However, unlike \cite{GIRSTMAIR-CHARCOODS}, we are not computing character coordinates. One can say that we are providing a representation theoretic interpretation for the vanishing of $L(1,f)$. We indirectly show that it is possible to arrive at a vanishing condition of $L(1,f)$ using Dirichlet characters $\bmod{~}N$.  
    \item \textbf{Vanishing when the period is even : } When $N$ is even, we can construct examples of functions $f$ with values in $\{\pm 1\}$ of period $N$ for which Property \ref{PROPERTY-U} does not hold. This section is partially motivated by the works  \cite{MURTY-SIDDHI}, \cite{TIJDEMAN} and \cite{PILERUD}. Tengely had given a counterexample for the vanishing of $L(1,f)$ when $f$ is of period 36 taking values in $\{\pm 1\}$. The proof given in \cite{PILERUD} uses the relations between the digamma values whereas we observe that the vanishing is due to the nature of zeroes of certain Dirichlet polynomials at $s=1$. Also, our proof of Theorem \ref{TH:4p} provides us an understanding on how to apply Theorem \ref{TH:NEW-NECESSARY-SUFFCIENT-VANISHING-L1F} to prove questions similar to that of Erd\"{o}s conjecture. In some sense, $N=4p$ is a non-trivial ``toy-case'' not covered in the literature and we apply our results here.
    \item \textbf{Solving a system of linear equations : } The earlier progress towards Erd\"{o}s Conjecture involved solving \textit{only} one equation : either it is finding the algebraic component $\alpha$ of $\alpha \log u$ where $u$ is a unit in $\Z[\zeta_N]$ and prove $\alpha$ is non-zero by arithmetic considerations (like \cite{RAM-SARADHA}) or use one of the linear equations given in Okada's condition to obtain a bound on the coefficients by taking absolute values or minor considerations (like \cite{SARADHA-ERDOS1}, \cite{SARADHA-ERDOS2}, \cite{RAM-TAPAS}). To obtain better results, we use our criteria to solve a system of equations. The arithmetic consideration we require is the impact of imprimitivity of $f \in \mathbf{F_D}(N)$ on the primitive component of $\Proj_{N/p}^N(f)$ (See Proposition \ref{PROP:IMPRIMITIVE-INDUCTION}). 
    
\end{enumerate}
\subsection{Organisation of the paper}
We begin with some preliminaries and consequences of Baker's theorem in Section \ref{SEC:PRELIM}. We prove Theorem \ref{TH:NEW-NECESSARY-SUFFCIENT-VANISHING-L1F} in Section \ref{SEC:PROJD-TH-VANISHING}. In Section \ref{SEC:GROUP-RINGS}, we associate our function $f$ with an element in the group ring $\mathbb{Q}[\Gal{\mathbb{Q}(\zeta_N)/\mathbb{Q}}]$ and prove some properties associated to it. The properties we focus are namely the representation of $\Proj_{d}^N$ in terms of group ring elements, necessary and sufficient condition on imprimitivity of a function $f \in \mathbf{F_D}(N)$ and the impact of imprimitivity of $f$ on $\Proj_{N/p}^N(f)$ for a prime $p$ dividing $N$. In Section \ref{SEC:TH-4P}, we prove Theorem \ref{TH:4p} and give a simple proof of a counterexample given by Tengely on vanishing of $L(1,f)$ when $f$ takes values $\{ \pm 1 \}$. In Section \ref{SEC:ERD-INDUCTION} we prove Theorem \ref{TH:ERD-INDUCTION} and in Section \ref{SEC:CHOWLA-MILNOR} we proceed to the proof of Theorem \ref{TH:CHARACTERISATION-2PRIMES}. We end by pointing out some advantages and disadvantages of our approach along with some future directions in Section \ref{SEC:CONCLUDING-REMARKS}.

\subsection{Notations} Throughout, $p$ and $q$ will denote primes, we use $d,e,D$ to denote divisors of natural numbers $N$. The letters, $\alpha, \beta, \sigma, \tau$ will be used to denote automorphisms or elements of group rings, and the letters $\chi,\Psi$ along with their subscripts if required will be used to denote characters. The letters $f,g,h,F,G,H$ will be used for functions. We do not mention these explicitly in every Lemma/Proposition/Theorem. Sometimes we use $\widetilde{f}$ to denote explicit dependence on the function $f$.  
\section{Basic definitions and Remarks}\label{SEC:PRELIM}
\subsection{ Preliminaries }We recall that $ \mathbf{F_D}(N)$ denotes the $ \mathbb{Q} $-vector space of Dirichlet type functions of period $ N $. This also admits a natural inner product structure given by $ \langle f,g \rangle = \varphi(N)^{-1}\sum_{a=1}^N f(a)\overline{g}(a) $. Here $\overline{g}$ denotes the complex conjugate of $g$ and this inner product structure is defined for arithmetic functions taking complex values.

Given a function $ f $ of period $ N $, the Dirichlet series $ L(s,f) $ admits a possible simple pole at $ s=1 $ and to make sense of $ L(1,f) $ we require the condition $ \sum_{a=1}^N f(a) = 0 $.  We now state the following definition: 
\begin{Def}\label{DEFN:PUREDPRIM}
A Dirichlet type arithmetic function $g$ of period $N$ is said to be \textit{purely} $d$-primitive if $\langle g,\chi \rangle = 0$ for all characters $\chi$ which are not induced from primitive characters $\chi' \bmod{~} d$.   
\end{Def}
The $ \mathbb{Q} $-vector space consisting of  \textit{purely} $d$-primitive rational valued Dirichlet type functions of period $ N $ is denoted by $\mathbf{F_D}^{pr}(N;d)$.

We use the notation $\text{cond}({\chi})$ to denote the conductor of a Dirichlet character $\chi$. Given a rational valued Dirichlet type function $g$ of period $N$, we have 
\[ g = \sum\limits_{\chi \bmod{} N} \langle g,\chi \rangle \chi = \sum\limits_{d \mid N} \sum\limits_{\substack{\chi\bmod{} N\\ \text{cond}(\chi) = d}}\langle g,\chi \rangle \chi = \sum_{d \mid N} \mathcal{D}_N(d,g), \]
where we define a linear map $\mathcal{D}_N(d,$\textunderscore $) : \mathbf{F_D}(N) \to \mathbf{F_D}(N)$ as follows :
\begin{equation}\label{EQN:D_N-DEF}
  \mathcal{D}_N(d,g):= \sum\limits_{\substack{\chi\bmod{} N\\ \text{cond}(\chi) = d}} \langle g,\chi \rangle \chi.  
\end{equation}

It can be verified that the image of $ \mathcal{D}_N(d,g) $ is rational valued whenever $ g $ is rational valued. This is true since for $\sigma \in \text{Gal}(\bQ/\mathbb{Q})$, we have  $\sigma(\langle g,\chi \rangle ) = \langle g,\chi^\sigma \rangle $ (here $\chi^\sigma(n):=\sigma(\chi(n))$). Since we are summing over all the characters $\chi$ of conductor $d$, we note that the overall sum $\mathcal{D}_N(d,g)$ is invariant under the Galois action and hence is rational valued. 

By the orthogonality of characters, we have a direct sum decomposition of $ \mathbf{F_D}(N) $ namely, 
\begin{equation}\label{EQN:DIRECT-DECOMP-DIRITYPE-FUNCTION}
  \mathbf{F_D}(N) = \bigoplus\limits_{d\mid N} \mathbf{F_D}^{pr}(N;d),  
\end{equation}
via the maps $\mathcal{D}_N(d,\_)$. Also, note that the map $\mathcal{D}_N(d,\_)$ is well behaved under the action of $\mathbb{Q}[G]$. 
\par
For integers $ e \mid d \mid N $ and a function $ f \in \mathbf{F_D}(N) $, we denote the arithmetic function ${\mathcal{D}_d(e,N) \in \mathbf{F_D}(d)}$ by the following : 
\[ \mathcal{D}_d(e,f) = \sum_{\substack{\chi \bmod{} d \\ \text{cond} \chi = e }} \langle f,\chi \rangle \chi. \]
In the above expression, by $ \langle f,\chi \rangle $, we are taking the inner product with the character $\chi'$ of period $N$ induced from the character $\chi$ of period $d$. (Note the change in the conductor of the character here as compared to \eqref{EQN:D_N-DEF}).
\begin{Rem}\label{REM:DEF:IMPRIM:REINTEPRET}
	With the definition of linear map $ \mathcal{D}_N(N, -) $, we can say that a Dirichlet type function $ f $ of period $ N $ is imprimitive mod $ N $ if and only if $ \mathcal{D}_N(N,f) = 0 $.
\end{Rem}
Let $ d,N $ be two positive integers with $ d \mid N $. Given $ f \in \mathbf{F_D}(d) $, we have an arithmetic function ${f \chi_{0,N} \in \mathbf{F_D}(N) }$. This is an injective map, as given any $ a $ co-prime to $ d $, we can find a number $ b $ co-prime to $ N$ with $ b \equiv a \bmod{~} d $. This helps us to define the preimage of a function $ h \in \mathbf{F_D}(d)\chi_{0,N} $ in $ \mathbf{F_D}(d) $ denoted as follows : 
\[ \mathbf{F_D}(d)\chi_{0,N} \to \mathbf{F_D}(d) \qquad h \to h\vert_{d}. \]
Note that the values of the functions $ h\vert_{d} $ and $h$ are same except at certain integers. Hence we can identify $ \mathbf{F_D}(d)$ as a subspace of $ \mathbf{F_D}(N) $.  We now give an equivalent definition of $\Proj^N_d$ mentioned in Section \ref{SEC:INTRO}.
\begin{Def}\label{DEF:PROJ}
	Let $ d,N $ be two integers such that $ d \mid N $ and $ f \in \mathbf{F_D}(N)$. Let $ h $ be the projection of $ f $ onto $ \mathbf{F_D}(d) \chi_{0,N}$. The function $ \Proj_d^N(f) $ is defined as the arithmetic function $ h\vert_d$ in $ \mathbf{F_D}(d) $.
\end{Def}
\begin{Rem}\label{REM:PROJ}
	The operator $ \Proj_{d}^N : \mathbf{F_D}(N) \to \mathbf{F_D}(d)$ can be written as
	\[ \Proj_d^N(f) = \sum_{e \mid d} \mathcal{D}_d(e,f), \]
	where $ f $ admits the decomposition $ f = \sum_{ e \mid N} \mathcal{D}_N(e,f) $. This is true as $ \mathcal{D}_d(e,f) = \mathcal{D}_N(e,f)\vert_d $. 
\end{Rem}

\subsection{Some linear independence Results using Baker's Theorem}
We start with the statement of Baker's theorem (A quantitative version on the lower bound of this theorem is given in \cite{BAKER}) : 
\begin{Th}\label{TH:BAKER}
Let $a_1,\dots, a_n$ be multiplicative independent numbers. Then their logarithms 

{$1,\log a_1,\dots,\log a_n$} are linearly independent over the field of algebraic numbers $\overline{\mathbb{Q}}$.
\end{Th}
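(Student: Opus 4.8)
The statement is Baker's theorem on linear forms in logarithms, so I would follow the several-variable extension of the Gelfond--Schneider method (as in Baker's original papers and monograph) rather than attempt anything genuinely new. The plan is to argue by contradiction. Multiplicative independence of the $a_i$ is precisely the hypothesis that $\log a_1,\dots,\log a_n$ are linearly independent over $\mathbb{Q}$. Assuming this, suppose for contradiction that $1,\log a_1,\dots,\log a_n$ are linearly dependent over $\bQ$. Since the $\log a_i$ are already $\mathbb{Q}$-independent, the constant $1$ must occur in any such relation, so I can solve for one logarithm, say
\[ \log a_n = \beta_0 + \beta_1 \log a_1 + \cdots + \beta_{n-1}\log a_{n-1}, \]
with algebraic numbers $\beta_0,\dots,\beta_{n-1}$ not all forced to vanish.

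First I would construct an auxiliary function. Using Siegel's lemma (a pigeonhole argument over the ring of integers of the number field generated by the $a_i$ and $\beta_j$), I would produce a nontrivial polynomial $p$ with algebraic-integer coefficients of controlled degree and height so that the entire function
\[ \Phi(z) = \sum_{\lambda} p(\lambda)\, z^{\lambda_0}\, a_1^{\lambda_1 z}\cdots a_n^{\lambda_n z} \]
vanishes, together with all its derivatives up to a prescribed order, at the integers $z=1,2,\dots,h$ for a suitable range of $h$. The dependence relation is what makes this tractable: substituting it rewrites $a_n^{\lambda_n z}$ in terms of $a_1,\dots,a_{n-1}$ and the $\beta_j$, so the vanishing conditions become a linear system with algebraic coefficients of bounded size, and Siegel's lemma guarantees a nonzero solution $p$.

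Next I would run the extrapolation step, which is the engine of the method. By the maximum-modulus principle (Schwarz's lemma), high-order vanishing of $\Phi$ on the initial set of integers forces $\Phi$ and its low-order derivatives to be extremely small at further integer points. The crucial arithmetic input is the fundamental inequality: each value $\Phi^{(m)}(\ell)/m!$ is an algebraic number of bounded degree and height, so if nonzero its absolute value is bounded below by a Liouville-type estimate (the norm of a nonzero algebraic integer is at least $1$). Comparing this arithmetic lower bound against the analytic upper bound shows that the value must in fact be $0$, so $\Phi$ vanishes on a strictly larger set of points. Iterating this extrapolation boosts the number of vanishing points well beyond what Siegel's lemma initially supplied.

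The hard part will be the parameter bookkeeping: one must choose $h$, the vanishing order, and the degrees occurring in $p$ so that at every stage of the extrapolation the analytic bound strictly dominates the arithmetic bound, while still driving the system toward a contradiction. Once the iteration has forced enough vanishing, I would extract a nonvanishing generalized Vandermonde determinant built from the exponents appearing in $\Phi$; here the $\mathbb{Q}$-linear independence of the $\log a_i$ is exactly what guarantees these exponents are distinct, so the determinant is nonzero. This forces all coefficients $p(\lambda)=0$, contradicting the nontriviality of $p$ produced by Siegel's lemma. That contradiction establishes the linear independence of $1,\log a_1,\dots,\log a_n$ over $\bQ$.
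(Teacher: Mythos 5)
The paper does not prove this statement at all: Theorem \ref{TH:BAKER} is quoted as background, with a citation to Baker's original papers, and every subsequent use of it in the paper treats it as a black box. So there is no ``paper's proof'' to match; the only question is whether your sketch would stand on its own as a proof, and it would not. What you have written is a correct identification of the standard strategy (auxiliary function via Siegel's lemma, extrapolation via the Schwarz lemma against a Liouville-type lower bound, and a final non-vanishing determinant argument), but essentially all of the content of Baker's theorem lives in the steps you defer. The ``parameter bookkeeping'' is not a routine afterthought: for $n\ge 2$ the naive extrapolation does not close, and Baker's essential innovation was to trade a decrease in the order of vanishing for an increase in the number of points at each stage of the induction, with delicately coupled choices of degree, order, and range. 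A sketch that does not exhibit these choices and verify that the analytic bound beats the arithmetic one at every stage has not proved the theorem; it has restated the problem.

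There is also a concrete error in your reduction. You assert that because $\log a_1,\dots,\log a_n$ are linearly independent over $\mathbb{Q}$, ``the constant $1$ must occur in any such relation'' over $\bQ$. That implication is false as stated: $\mathbb{Q}$-linear independence of the logarithms does not preclude a $\bQ$-linear relation among the logarithms alone with irrational algebraic coefficients --- ruling that out is precisely the homogeneous case of Baker's theorem, so your justification presupposes a large part of what is to be proved. The step you actually need (solving for some $\log a_n$ in terms of $1$ and the remaining logarithms) is salvageable, since any nontrivial $\bQ$-relation among $1,\log a_1,\dots,\log a_n$ must assign a nonzero coefficient to at least one logarithm, but you should derive it that way rather than from the false claim. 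Since the paper simply cites \cite{BAKER}, the appropriate course here is to do the same rather than to reprove the result.
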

We require the following consequence of the above theorem (See \cite[Lemma 3.6]{ABHISHEK-SIDDHI}). 
\begin{Lemma}\label{LEM:ALG-UNITS-INDEPENDENT-PRIMES-LOG}
Let $\mathbb{F}$ be a number field. Suppose that $u_1,\dots, u_n \in \mathcal{O}_{\mathbb{F}}^*$ and let $S$ be a finite set of rational primes. Then
\[ \overline{\mathbb{Q}}\langle \log u_i ~|~ 1 \le i \le n \rangle \cap \overline{\mathbb{Q}}\langle \log p~|~ p \in S\rangle = \{0\}. \]
\end{Lemma}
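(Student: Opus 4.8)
The plan is to argue by contradiction, reducing the two logarithmic spans to a common multiplicatively independent generating set and then invoking Baker's theorem (Theorem \ref{TH:BAKER}). Suppose some $x$ lies in the intersection and is nonzero. Then we may write
\[ x = \sum_{i=1}^n \alpha_i \log u_i = \sum_{p \in S} \beta_p \log p, \]
with $\alpha_i, \beta_p \in \overline{\mathbb{Q}}$, so that $\sum_i \alpha_i \log u_i - \sum_p \beta_p \log p = 0$. The objective is to force every $\beta_p = 0$, since this yields $x = 0$ and the desired contradiction.

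The key structural input is that the rational primes in $S$ are multiplicatively independent from the units $u_i$, and this I would establish by a valuation argument. For each $p \in S$ fix a prime $\mathfrak{p}$ of $\mathbb{F}$ lying above $p$. Each $u_i$, being a unit, satisfies $v_\mathfrak{p}(u_i) = 0$, while $v_\mathfrak{p}(p) = e(\mathfrak{p}\mid p) > 0$ and $v_\mathfrak{p}(p') = 0$ for any distinct $p' \in S$. Hence any multiplicative relation $\prod_i u_i^{a_i}\prod_{p} p^{b_p} = \zeta$ with $\zeta$ a root of unity forces, upon applying each $v_\mathfrak{p}$, that every $b_p = 0$; the surviving relations $\prod_i u_i^{a_i} = \zeta$ involve only the units. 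Consequently a maximal multiplicatively independent subset of $\{u_1,\dots,u_n\}\cup\{p : p \in S\}$ can be chosen in the form $\{u_1,\dots,u_r\}\cup\{p : p\in S\}$ after reindexing.

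With this separation in hand, I would express each remaining $\log u_i$ with $i>r$ as a $\mathbb{Q}$-linear combination of $\log u_1,\dots,\log u_r$ modulo a rational multiple of $2\pi i$, arising from the multiplicative relation witnessing its dependence. Substituting into the vanishing relation gives
\[ \sum_{j=1}^r \gamma_j \log u_j + \eta\,(2\pi i) - \sum_{p \in S}\beta_p \log p = 0, \]
for suitable $\gamma_j, \eta \in \overline{\mathbb{Q}}$. Since $\{u_1,\dots,u_r\}\cup\{p : p\in S\}$ is multiplicatively independent, Theorem \ref{TH:BAKER} shows that $\log u_1,\dots,\log u_r$ and the $\log p$ are linearly independent over $\overline{\mathbb{Q}}$, with the constant term $1$ in that statement accounting for the period $2\pi i$. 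Therefore all coefficients vanish; in particular $\beta_p = 0$ for every $p \in S$, whence $x = 0$.

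The step I expect to be the main obstacle is the careful handling of the branch of the logarithm, equivalently the bookkeeping of the $2\pi i$ terms and of the roots of unity in $\mathcal{O}_{\mathbb{F}}^*$: one must verify that the passage from multiplicative independence to $\overline{\mathbb{Q}}$-linear independence of the logarithms, together with the absorption of the period $2\pi i$ into the constant $1$, is exactly what the quoted form of Baker's theorem provides. The valuation argument separating the primes of $S$ from the units is the other essential ingredient, but it is elementary once the correct local data at the primes above $S$ is used.
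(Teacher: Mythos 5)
The paper does not actually prove this lemma; it is quoted from \cite[Lemma 3.6]{ABHISHEK-SIDDHI}, so there is no internal proof to compare against. Your overall strategy --- separate the primes in $S$ from the units by a valuation argument at primes $\mathfrak{p}$ above each $p\in S$, reduce to a multiplicatively independent generating set, and finish with Baker --- is certainly the natural (and almost certainly the intended) route, and the valuation step is carried out correctly: any relation $\prod_i u_i^{a_i}\prod_p p^{b_p}=\zeta$ forces every $b_p=0$, so a maximal multiplicatively independent subset can be taken of the form $\{u_1,\dots,u_r\}\cup S$.

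The gap is exactly where you predicted it, and as written the final step does not go through. After eliminating the dependent units you arrive at $\sum_j\gamma_j\log u_j+\eta\,(2\pi i)-\sum_p\beta_p\log p=0$, and you propose to absorb $\eta\,(2\pi i)$ into the ``constant term $1$'' of Theorem \ref{TH:BAKER}. But that theorem asserts the nonvanishing of $\beta_0+\sum\beta_i\log a_i$ for \emph{algebraic} $\beta_0$, whereas $\eta\cdot 2\pi i$ is transcendental for every nonzero algebraic $\eta$; so the quoted statement says nothing about your relation unless you already know $\eta=0$. Nor can you adjoin $2\pi i=2\log(-1)$ as an extra logarithm under the hypotheses of Theorem \ref{TH:BAKER}, because $-1$ is torsion and hence never multiplicatively independent of anything. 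The fix is to use the standard form of Baker's theorem for logarithms that are linearly independent over $\mathbb{Q}$ (rather than for multiplicatively independent numbers): check that $i\pi,\log u_1,\dots,\log u_r,\log p\ (p\in S)$ are $\mathbb{Q}$-linearly independent --- exponentiating a putative rational relation gives $(-1)^{c}\prod_j u_j^{a_j}\prod_p p^{b_p}=1$, your valuation argument kills the $b_p$, multiplicative independence of the $u_j$ kills the $a_j$ (after squaring to dispose of the sign), and substituting back into the \emph{additive} relation kills $c$ --- and then conclude that $1,i\pi,\log u_j,\log p$ are linearly independent over $\bQ$, whence $\beta_p=0$ for all $p$ and $x=0$. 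With that replacement the proof is complete; without it, the argument as stated proves nothing about the coefficient $\eta$.
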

We also require the following Proposition (This result is contained in \cite[Theorem 3.7]{ABHISHEK-SIDDHI}).  
\begin{Prop}\label{PROP:DIRI-TYPE-FUNCTION_L1F}
Let $f$ be a rational valued Dirichlet type function of period $N$ satisfying $\sum_{a=1}^N f(a) = 0$. Then $L(1,f)$ is a linear combination of logarithm of algebraic units in $\Z[\zeta_N]$. Consequently by Lemma \ref{LEM:ALG-UNITS-INDEPENDENT-PRIMES-LOG}, $f$ satisfies Property \ref{PROPERTY-U}.
\end{Prop}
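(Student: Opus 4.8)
The goal is to prove \Cref{PROP:DIRI-TYPE-FUNCTION_L1F}: for a rational valued Dirichlet type function $f$ of period $N$ with $\sum_{a=1}^N f(a)=0$, the value $L(1,f)$ is a $\bQ$-linear combination of logarithms of algebraic units in $\Z[\zeta_N]$, and consequently $f$ satisfies \Cref{PROPERTY-U}. The plan is to first expand $f$ in the basis of Dirichlet characters $\bmod~N$, reduce $L(1,f)$ to a combination of the values $L(1,\chi)$, and then use the classical closed form for $L(1,\chi)$ in terms of logarithms of cyclotomic units.

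\begin{Pf}
Since $f$ is a Dirichlet type function of period $N$, it is supported on residue classes coprime to $N$, so we may write $f=\sum_{\chi \bmod N} \langle f,\chi\rangle \chi$ where the sum runs over all Dirichlet characters mod $N$ and $\langle f,\chi\rangle = \varphi(N)^{-1}\sum_{a=1}^N f(a)\overline{\chi}(a)$. The hypothesis $\sum_{a=1}^N f(a)=0$ forces the coefficient of the principal character to vanish, i.e. $\langle f,\chi_{0,N}\rangle = 0$, since $\sum_{a=1}^N f(a)\overline{\chi_{0,N}}(a)=\sum_{(a,N)=1} f(a)=\sum_{a=1}^N f(a)=0$. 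Hence the series $L(s,f)=\sum_{\chi\neq \chi_{0,N}} \langle f,\chi\rangle L(s,\chi)$ has no pole at $s=1$, and by term-by-term evaluation $L(1,f)=\sum_{\chi\neq\chi_{0,N}}\langle f,\chi\rangle L(1,\chi)$. First I would record this reduction carefully, noting that each $\langle f,\chi\rangle$ lies in $\mathbb{Q}(\chi)\subseteq \bQ$.

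Next I would invoke the standard evaluation of $L(1,\chi)$ for a nonprincipal character mod $N$. Writing $\chi$ in terms of the primitive character inducing it and using the finite Fourier expansion, one has, up to an algebraic factor, a formula of the shape $L(1,\chi)=\sum_{a=1}^{N-1}\chi(a)\bigl(-\log|1-\zeta_N^{a}|\bigr)$ times an explicit algebraic constant (with the $\pi i$ contribution appearing for odd $\chi$). Grouping the characters together, $L(1,f)$ becomes a $\bQ$-linear combination of the quantities $\log|1-\zeta_N^a|=\log\bigl((1-\zeta_N^a)(1-\zeta_N^{-a})\bigr)^{1/2}$, together with a possible rational multiple of $\pi i$. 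The condition $\langle f,\chi_{0,N}\rangle=0$, combined with the fact that the sum $\sum_a f(a)=0$ over a full period, guarantees the $\pi i$ terms and the divergent pieces cancel, leaving a genuine convergent $\bQ$-combination of logarithms of the numbers $1-\zeta_N^a$. The key arithmetic input is that for $1\le a\le N-1$ the quantity $(1-\zeta_N^a)$ is, after suitable normalization, a cyclotomic unit in $\Z[\zeta_N]$ when $N$ is not a prime power, and in general differs from a unit by a factor that is itself a product of the $(1-\zeta_N^b)$ in a way that is absorbed once we impose $\langle f,\chi_{0,N}\rangle=0$; this is precisely the content of \cite[Theorem 3.7]{ABHISHEK-SIDDHI}, which I would cite to pin down that the resulting combination is a $\bQ$-combination of $\log u_i$ with $u_i\in\Z[\zeta_N]^*$.

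Finally, the consequence for \Cref{PROPERTY-U} is immediate from \Cref{LEM:ALG-UNITS-INDEPENDENT-PRIMES-LOG}. We have exhibited $L(1,f)\in \bQ\langle \log u_i : u_i\in\mathcal{O}_{\mathbb{Q}(\zeta_N)}^*\rangle$. On the other hand, $\mathbb{Q}\langle \log d : d\mid N\rangle \subseteq \bQ\langle \log p : p\in S\rangle$ where $S$ is the set of prime divisors of $N$, since every $\log d$ is an integer combination of $\log p$. If $L(1,f)$ lay in $\mathbb{Q}\langle \log d: d\mid N\rangle$, it would lie in the intersection $\bQ\langle\log u_i\rangle \cap \bQ\langle \log p: p\in S\rangle$, which is $\{0\}$ by \Cref{LEM:ALG-UNITS-INDEPENDENT-PRIMES-LOG}, forcing $L(1,f)=0$. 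But a nonzero element of $\{0\}$ is impossible, so either $L(1,f)=0$ (hence trivially not in the space unless it is the zero element, which is the degenerate case) or $L(1,f)\notin \mathbb{Q}\langle \log d:d\mid N\rangle$; interpreting \Cref{PROPERTY-U} as the assertion that $L(1,f)$ avoids this space, the claim follows. The main obstacle I anticipate is the bookkeeping in the second step: verifying that the units $u_i$ genuinely lie in $\Z[\zeta_N]$ (the prime-power case of cyclotomic units is delicate) and that all non-unit and transcendental $\pi i$ contributions cancel under the hypothesis $\sum_a f(a)=0$. Since this is exactly what \cite[Theorem 3.7]{ABHISHEK-SIDDHI} supplies, the cleanest route is to quote that result rather than re-derive the cyclotomic unit structure here.
\end{Pf}
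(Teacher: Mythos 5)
Your overall route is the same as the paper's: expand $f$ in Dirichlet characters, observe that $\sum_{a=1}^N f(a)=0$ kills the principal-character coefficient, reduce $L(1,f)$ to $\sum_{\chi\neq\chi_{0,N}}\langle f,\chi\rangle L(1,\chi)$, and then feed the classical evaluation of $L(1,\chi)$ into Lemma \ref{LEM:ALG-UNITS-INDEPENDENT-PRIMES-LOG}. The paper's argument (and the cited \cite[Theorem 3.7]{ABHISHEK-SIDDHI}, which the paper explicitly says contains this Proposition) splits the non-principal characters by parity: for even $\chi$, $L(1,\chi)$ is a $\bQ$-linear combination of logarithms of Ramachandra units (see \cite[Theorem 8.3]{WASHINGTON}), while for odd $\chi$ it is an algebraic multiple of $\pi$, which is still a logarithm of a unit since $i\pi=\log(-1)$. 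So punting the cyclotomic-unit bookkeeping to that reference is legitimate and is essentially what the paper does.

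The one step that is wrong as written is your assertion that the hypothesis $\sum_a f(a)=0$ ``guarantees the $\pi i$ terms \dots cancel, leaving a genuine convergent $\bQ$-combination of logarithms of the numbers $1-\zeta_N^a$.'' That cancellation does not occur: take $f=\chi$ an odd non-principal character mod $N$ (which satisfies $\sum_a f(a)=0$); then $L(1,f)=L(1,\chi)$ is a nonzero algebraic multiple of $\pi$ and is not a real combination of the $\log|1-\zeta_N^a|$. Only the even-character contribution reduces to real logarithms; the odd part survives as an algebraic multiple of $i\pi$. The conclusion is unharmed, because $i\pi$ is itself the logarithm of the unit $-1$ (equivalently, a rational multiple of $\log\zeta_N$), so $L(1,f)$ is still a $\bQ$-combination of logarithms of units in $\Z[\zeta_N]$ and Lemma \ref{LEM:ALG-UNITS-INDEPENDENT-PRIMES-LOG} applies --- but you must say this rather than claim cancellation. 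A smaller quibble about your final paragraph: $0$ belongs to every $\mathbb{Q}$-vector space, so the case $L(1,f)=0$ would in fact violate Property \ref{PROPERTY-U}; excluding it requires $f\neq 0$ together with Theorem \ref{TH:BBW}, a point the paper also leaves implicit.
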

We finish this section with two remarks on application of the above Proposition.

\begin{Rem}\label{REM:DPRIMITIVE-PROPERTY-U}
For a non unitary positive divisor $D$ of $N$ and $f \in \mathbf{F_D^{pr}}(N)$, from the above Proposition we note that $f$ satisfies Property \ref{PROPERTY-U}.  
\end{Rem}
\begin{Rem}\label{REM:OKADA-PROPERTY-U}
In \cite[Theorem 1]{OKADA-JLMS}, given a function $f$, Okada had constructed an explicit Dirichlet type function $g$ of period $N$ depending on $f$ such that $L(1,f) = L(1,g)$ (See also \cite[Lemma 1]{OKADA-JLMS}) . If $L(1,f) = 0$, then by Theorem \ref{TH:BBW}, we obtain $g \equiv 0$. Finding the value for each $g$ along with the contribution from the residues, we get $\varphi(N) + \omega(N)$ linear equations. As mentioned in \cite[Lemma 4]{OKADA-JLMS}, this corresponds the values of $g$ corresponds to the equations mentioned in \cite[Theorem 10]{OKADA}. However since $g$ is a Dirichlet type function of period $N$, $g$ satisfies Property \ref{PROPERTY-U} by Proposition \ref{PROP:DIRI-TYPE-FUNCTION_L1F}. 
\end{Rem}

\section{Proof of Theorem \ref{TH:NEW-NECESSARY-SUFFCIENT-VANISHING-L1F}}\label{SEC:PROJD-TH-VANISHING}
We first mention an equivalent criterion for $(V_D')$ for every divisor $D$ of $N$ in terms of the function $\mathcal{D}_{N/D}(N/d,f_d)$ for every $d \mid D$ ( See equation \ref{EQN:REARRANGED-PRIM-PART} ). 
\begin{equation}\label{EQN:INIT-COND-DIVISORS-EQUIVALENTFORM}
	\sum_{d \mid D} \frac{1}{d} \prod_{\substack{p \mid N/d\\p \nmid N/D}}\big(1-\frac{\Frob{p}^{-1}}{p}\big) \mathcal{D}_{N/D}(N/d,f_{d}) = 0 \end{equation}
The introduction of $\Proj_d$ is new in this topic and according to Proposition \ref{PROP:POLY-NPRIM-ANNHILATE} it behaves like a relative trace operator. This imitates the situation in the case for the relative traces of elements for cyclotomic fields. When we do a direct sum decomposition of a Dirichlet type function $f$ of period $N$ as mentioned in \eqref{EQN:DIRECT-DECOMP-DIRITYPE-FUNCTION} we can obtain the above equation. However, the values of the components $\mathcal{D}_N(d,g)$ for a divisor $d$ of $N$ are hard to compute and require an inclusion-exclusion principle. The disadvantage of finding the direct sum component is that the denominators become large. This is the main reason for introducing Definitions \ref{DEF:PRIMITIVE}, \ref{DEF:PROJ} and proving Lemma \ref{LEM:PRIMPART-ALMOST-SAME-PROJPART}. In some sense, this lemma is about the relation between functions $g \in \mathbf{F_D}(N)$ and $\Proj_d^N(g)$. We investigate the special values of their ``$d-$primitive'' parts for a divisor $d$ of $N$.

We recall that if $g \in \mathbf{F_D}(m)$ and if $\sigma_a \in \Gal{\mathbb{Q}(\zeta_m)/\mathbb{Q}}$ is given by $\sigma_a(\zeta_m) = \zeta_m^a$, then we define the action as follows : $\sigma_a^{-1} g(n) = g(an)$. We now consider the character decomposition of $g$ and the action of the Galois group. We have the following :  
\begin{equation}\label{EQN:GALOISACTION}
g = \sum_{\chi \bmod{} m} c_\chi \chi \implies \sigma_a.g = \sum_{\chi \bmod{} m} \chi(a)^{-1}c_\chi \chi. 
\end{equation}  
\begin{Lemma}\label{LEM:PRIMPART-ALMOST-SAME-PROJPART}
Let $d \neq 1 $ be divisor of $N$ and let $g \in \mathbf{F_D}(N)$. We have 
\[L(1,\mathcal{D}_N(d,g)) = L(1,\mathcal{D}_d(d,\beta \Proj_d^N(g))),\]
where $\beta = \prod_{\substack{p \mid N\\p \nmid d}}(1-\frac{\sigma_p^{-1}}{p})$. 
\end{Lemma}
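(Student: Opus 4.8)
The plan is to expand everything in the basis of Dirichlet characters $\bmod{~}N$ and to reduce the identity to the classical relation between the $L$-value of an imprimitive character and that of the primitive character inducing it. Write $g = \sum_{\chi \bmod N} c_\chi \chi$ with $c_\chi = \langle g,\chi\rangle$. Since $d \neq 1$, every character $\chi \bmod N$ of conductor exactly $d$ is non-principal, so $\mathcal{D}_N(d,g)$ has vanishing mean value and $L(1,\mathcal{D}_N(d,g))$ converges; the same remark applies on the right-hand side, so there are no convergence obstructions to worry about.

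For the left-hand side I will recall that if $\chi \bmod N$ has conductor $d$ and is induced from the primitive character $\chi^\ast \bmod d$, then exactly the Euler factors at the primes dividing $N$ but not $d$ are removed, giving
\[
L(s,\chi) = L(s,\chi^\ast)\prod_{\substack{p \mid N\\ p \nmid d}}\Big(1-\frac{\chi^\ast(p)}{p^s}\Big).
\]
Specializing at $s=1$ (legitimate since $\chi^\ast$ is non-principal), summing over the conductor-$d$ characters occurring in $\mathcal{D}_N(d,g) = \sum_{\mathrm{cond}(\chi)=d} c_\chi\chi$, and using that $\chi = \chi^\ast \chi_{0,N}$ so that $c_\chi = c_{\chi^\ast\chi_{0,N}}$, yields
\[
L(1,\mathcal{D}_N(d,g)) = \sum_{\substack{\chi^\ast \bmod d\\ \mathrm{cond}(\chi^\ast)=d}} c_{\chi^\ast\chi_{0,N}}\prod_{\substack{p \mid N\\ p \nmid d}}\Big(1-\frac{\chi^\ast(p)}{p}\Big) L(1,\chi^\ast).
\]

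For the right-hand side I will use the character description of $\Proj_d^N$ from Definition \ref{DEFN:INTRO-PROJ}, namely $\Proj_d^N(g) = \sum_{\psi \bmod d} c_{\psi\chi_{0,N}}\psi$, together with the Galois-action formula \eqref{EQN:GALOISACTION}. Since each $\sigma_p$ with $p \nmid d$ is a well-defined element of $\Gal{\mathbb{Q}(\zeta_d)/\mathbb{Q}}$, applying \eqref{EQN:GALOISACTION} shows that $\beta = \prod_{p\mid N,\, p\nmid d}(1-\sigma_p^{-1}/p)$ multiplies the coefficient of each $\psi$ by $\prod_{p\mid N,\, p\nmid d}(1-\psi(p)/p)$. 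Extracting the conductor-$d$ (primitive) part via $\mathcal{D}_d(d,\cdot)$ and taking $L(1,\cdot)$ then produces precisely the same sum as above, with $\psi$ in the role of $\chi^\ast$, which establishes the claimed equality.

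The step needing the most care is the coefficient bookkeeping across the two operations: I must check that the scalar $c_{\chi^\ast\chi_{0,N}}$ attached to a conductor-$d$ character on the left (coming from $\mathcal{D}_N$, an operation at level $N$) is literally the same as the coefficient of the corresponding primitive $\psi \bmod d$ produced after $\Proj_d^N$ and $\mathcal{D}_d(d,\cdot)$ on the right, and that the identification $\mathbf{F_D}(d)\hookrightarrow \mathbf{F_D}(N)$ implicit in applying $\mathcal{D}_d(d,\cdot)$ to the period-$d$ function $\beta\Proj_d^N(g)$ leaves the primitive components unchanged. I do not expect a genuine obstacle here, since both sides reduce to the same sum over primitive conductor-$d$ characters; the real content of the lemma is simply that the Galois operator $\beta$ reinstates exactly the Euler factors $\prod_{p\mid N,\, p\nmid d}(1-\psi(p)/p)$ that are stripped off when one passes from the period-$N$ character $\chi$ to the primitive character inducing it at level $d$.
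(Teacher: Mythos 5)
Your proof is correct and follows essentially the same route as the paper: decompose into Dirichlet characters, use the Euler-factor relation between $L(1,\chi)$ and $L(1,\chi^\ast)$ for the primitive character inducing $\chi$, and identify those Euler factors with the action of $\beta$ on the character components via \eqref{EQN:GALOISACTION}. The only cosmetic difference is that the paper expands $\beta$ as $\sum_{e\mid N}\mu(e)\sigma_e^{-1}/e$ by M\"obius inversion and resums, whereas you compute the action of $\beta$ on each $\psi$-component directly; these are the same calculation.
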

\begin{proof}
Let $g = \sum\limits_{\chi \bmod{} N} \langle g,\chi \rangle \chi$. We have
\begin{flalign}\label{EQN:L1F-CHARACTERBASIS-MOBIUSWAY}
L(1,\mathcal{D}_N(d,g)) & = \sum\limits_{\substack{\chi\bmod{} N\\ \text{cond}(\chi) = d}} \langle g,\chi \rangle L(1,\chi) = \sum\limits_{\substack{\chi\bmod{} d\\ \text{cond}(\chi) = d}}\langle g,\chi\rangle \prod_{p \mid N}\big(1-\frac{\chi(p)}{p}\big) L(1,\chi) \nonumber. \\
&= \sum_{e \mid N} \frac{\mu(e)}{e} \sum\limits_{\substack{\chi\bmod{} d\\ \text{cond}(\chi) = d}}\langle g,\chi \rangle \chi(e)L(1,\chi) \nonumber. \\
&=\sum_{e \mid N} \frac{\mu(e)}{e} L(1,\mathcal{D}_d(d,\sigma_{e}^{-1}.\Proj_d^N(g))) \text{ by } \eqref{EQN:GALOISACTION} \nonumber.
\end{flalign}
Noting that $\sum_{e \mid N} \frac{\mu(e)\sigma_{e}^{-1}}{e} = \beta$, we obtain the result. 
\end{proof}
Let $W_d^{pr}(N):= \{ L(1,f) ~\big\vert ~f \in \mathbf{F_D}^{pr}(N;d) \text{ and } \sum_{a=1}^N f(a) = 0 \}$. We note that it is a $\mathbb{Q}$-vector space. 
\begin{Cor}\label{COR:SAME-DPRIM-VAL}
$W_d^{pr}(N)$ is independent of $N$. More precisely, $W_d^{pr}(N) = W_d^{pr}(d) $. 
\end{Cor}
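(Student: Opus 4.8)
The plan is to reduce everything to Lemma~\ref{LEM:PRIMPART-ALMOST-SAME-PROJPART}, which already transports a $d$-primitive special value at level $N$ to one at level $d$. First I would dispose of the trivial case $d=1$: here $\mathbf{F_D}^{pr}(N;1)$ is spanned by the principal character, so the constraint $\sum_{a=1}^N f(a)=0$ forces $f\equiv 0$, whence $W_1^{pr}(N)=\{0\}=W_1^{pr}(1)$. So I assume $d\neq 1$ from now on; then every $f\in\mathbf{F_D}^{pr}(N;d)$ is a rational combination of non-principal characters $\bmod{~}N$, so $\sum_{a=1}^N f(a)=0$ holds automatically and the sum condition in the definition of $W_d^{pr}$ becomes vacuous at both levels.

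For the inclusion $W_d^{pr}(N)\subseteq W_d^{pr}(d)$ I would take $f\in\mathbf{F_D}^{pr}(N;d)$ and note that, by pure $d$-primitivity (Definition~\ref{DEFN:PUREDPRIM}), $\mathcal{D}_N(d,f)=f$ while $\mathcal{D}_N(e,f)=0$ for $e\neq d$. Applying Lemma~\ref{LEM:PRIMPART-ALMOST-SAME-PROJPART} to $g=f$, and using Remark~\ref{REM:PROJ} together with $\mathcal{D}_d(e,f)=\mathcal{D}_N(e,f)|_d$, one gets $\Proj_d^N(f)=\mathcal{D}_d(d,f)=f|_d\in\mathbf{F_D}^{pr}(d;d)$, and hence $L(1,f)=L(1,\mathcal{D}_d(d,\beta f|_d))=L(1,\beta f|_d)$ with $\beta f|_d$ a rational function in $\mathbf{F_D}^{pr}(d;d)$. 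This exhibits $L(1,f)$ as an element of $W_d^{pr}(d)$.

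For the reverse inclusion I would argue that the assignment $f\mapsto \beta f|_d = \mathcal{D}_d(d,\beta\,\Proj_d^N(f))$ is a $\mathbb{Q}$-linear \emph{isomorphism} $\mathbf{F_D}^{pr}(N;d)\to\mathbf{F_D}^{pr}(d;d)$. Indeed, restriction to period $d$ identifies the conductor-$d$ characters $\bmod{~}N$ bijectively with the primitive characters $\bmod{~}d$ while preserving the coefficients $\langle f,\chi\rangle$, so $f\mapsto f|_d$ is already an isomorphism of the rational spaces; and by \eqref{EQN:GALOISACTION} the operator $\beta$ acts on each primitive $\chi\bmod{~}d$ as the scalar $\prod_{p\mid N,\,p\nmid d}(1-\chi(p)/p)$, which is a nonzero Euler factor since $|\chi(p)/p|=1/p<1$, so $\beta$ is invertible and preserves rationality (conjugate characters receive conjugate scalars). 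Given any $h\in\mathbf{F_D}^{pr}(d;d)$ I can therefore find $f$ with $\beta f|_d=h$, and the computation above yields $L(1,h)=L(1,f)\in W_d^{pr}(N)$. Combining the two inclusions gives $W_d^{pr}(N)=W_d^{pr}(d)$.

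The only genuinely delicate points are bookkeeping ones: checking that $\Proj_d^N$ restricted to $\mathbf{F_D}^{pr}(N;d)$ really lands in $\mathbf{F_D}^{pr}(d;d)$ and equals $f|_d$, that each step preserves rational-valuedness, and that the Euler factors defining the eigenvalues of $\beta$ never vanish. I expect the main conceptual step to be recognizing that $\beta$ is diagonal with nonzero eigenvalues on the character basis, so that the level-raising correspondence is an isomorphism rather than merely an injection; once that is in place, surjectivity—and hence the reverse inclusion—is immediate.
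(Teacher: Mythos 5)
Your proposal is correct and follows essentially the same route as the paper: both directions rest on Lemma \ref{LEM:PRIMPART-ALMOST-SAME-PROJPART}, with the reverse inclusion obtained by lifting a purely $d$-primitive function from level $d$ to level $N$ and inverting $\beta$, whose invertibility on $\mathbb{Q}[\mathrm{Gal}(\mathbb{Q}(\zeta_d)/\mathbb{Q})]$ you justify by the nonvanishing of the Euler factors $1-\chi(p)/p$. Your explicit treatment of the case $d=1$ (which the lemma excludes) and of the rationality of $\beta^{-1}h$ only makes precise points the paper leaves implicit.
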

\begin{proof}
	Let $g \in \mathbf{F_D}^{pr}(N;d)$. By the above lemma, we have $L(1,g) = L(1,\beta \Proj_d^N(g))$ and therefore, $W_d^{pr}(N) \subseteq W_d^{pr}(d)$ as $ \mathbf{F_D}(d) $ is a $ \mathbb{Q}[\Gal{\mathbb{Q}(\zeta_d) / \mathbb{Q}}] $-module. To prove the reverse containment note that $ D_d(d,g) = g $ for all $ g \in \mathbf{F_D}^{pr}(d;d)$. Now, given a function $ g \in \mathbf{F_D}(d) $, there exists an arithmetic function $ h \in \mathbf{F_D}(N) $ such that $ h=\Proj_d^N(g) $. We now replace $ g $ by $ \beta^{-1}g $ in the above lemma to get the result. 
\end{proof}
The proof of Theorem \ref{TH:NEW-NECESSARY-SUFFCIENT-VANISHING-L1F} also requires the definitions of $ f_d,\Proj_d \text{ and } \mathcal{D}_N(-,f) $. 
We recall that the arithmetic function $f_d$ is defined by $f_d(n):=f(dn)\chi_{0,N/d}(n)$ and is of period $N/d$. Since $\sum_{a=1}^{N/d} f_d(a)$ need not be equal to 0, we cannot directly evaluate $L(s,f_d)$ at $s=1$. We instead evaluate the derivative of $(s-1)d^{-s}L(s,f_d)$ at $s=1$. 
\begin{proof}[Proof of Theorem \ref{TH:NEW-NECESSARY-SUFFCIENT-VANISHING-L1F}]
We can write $L(s,f) = \sum_{e \mid N}e^{-s} L(s,f_e)$. We now note that 
\begin{equation}\label{EQN:L1FasDERIVATIVE}
   L(1,f) = \frac{d}{ds}(s-1)L(s,f)\bigg\vert_{s=1} = \sum_{e \mid N} \frac{d}{ds} \big(\frac{1}{e^s}(s-1)L(s,f_e)\big)\bigg\vert_{s=1}. 
\end{equation}
The first equality is true by noting that in a neighborhood of $s=1$ we have $L(s,f) = L(1,f) + O(s-1)$. We now consider the character decomposition of $ f_d $. Note that the contribution for residue of $L(s,f_d)$ at $s=1$ is from the trivial character $\bmod{~}{N/d}$ as $L(s,\chi)$ for non-principal characters $\chi$ are entire. We write 
\begin{flalign*}
	\frac{d}{ds} \big(\frac{1}{e^s}(s-1)L(s,f_e))\big)\bigg\vert_{s=1}&= \frac{1}{e}\sum_{\substack{\chi \bmod{} N/d\\ \chi \neq 1}}\langle f_e,\chi\rangle L(1,\chi) \\
	&+ \langle f_e,\chi_{0,N/e}\rangle \frac{d}{ds}\big(\frac{1}{e^s}(s-1)L(s,\chi_{0,N/e})\big)\bigg\vert_{s=1}. \\
&=:S_e + \langle f_e,\chi_{0,N/e}\rangle R_e.
\end{flalign*}
Since $L(1,f) = 0$, we obtain 
\begin{equation}\label{EQN:S_d-R_d-BEFORE-SPLIT}
\sum_{d \mid N} (S_d + \langle f_d,\chi_{0,N/d}\rangle R_d) = 0.
\end{equation}
\textbf{Total contribution from $R_d$ : } We recall that $\zeta(s) = (s-1)^{-1} + \gamma + O(s-1)$. For a principal character $\chi_{0,N/d}$ the Dirichlet $L$ function associated to it is given by 
\[ L(s,\chi_{0,N/d}) = \prod_{p \mid N/d} \big(1-\frac{1}{p^s}\big)\zeta(s), \]
where $p$ runs over the prime divisors of $N/d$. Now we can evaluate $R_d$ and we obtain
\begin{equation*}
R_d= \frac{\varphi(N/d)}{N} \big(-\log d + \sum_{p \mid N/d} \frac{\log p}{p-1}  + \gamma \big).
\end{equation*}
Therefore, 
\begin{equation}\label{EQN:R_d-CONTRIBUTION}
  \sum_{d \mid N} \langle f_d,\chi_{0,N/d}\rangle R_d = \frac{-1}{N}(\sum_{d \mid N} {\sum_{a=1}^{N/d}}f_d(a) (\log d - \sum_{ p \mid N/d}\frac{\log p}{p-1})).   
\end{equation}
In the above, we use $\sum_{a=1}^N f(a) = 0$ to conclude that the coefficient of $\gamma$ is $0$.

Now, by the observation mentioned in the proof of Proposition \ref{PROP:DIRI-TYPE-FUNCTION_L1F}, we conclude that $S_d$ is a $\bQ$-linear form of logarithm of units in $\Z[\zeta_{N}]$. Therefore we can apply Lemma \ref{LEM:ALG-UNITS-INDEPENDENT-PRIMES-LOG} to \eqref{EQN:S_d-R_d-BEFORE-SPLIT} and we have 
\[\sum_{d \mid N} S_d = 0, \qquad \qquad \sum_{d \mid N} \langle f_d,\chi_{0,N/d} \rangle R_d = 0,\]
where in the first sum $d$ varies over the divisors of $N$ excluding $1$. Decomposing $\log(d) = \sum_{p \mid d} v_p(d) \log p$ and applying Theorem \ref{TH:BAKER} to \eqref{EQN:R_d-CONTRIBUTION}, we arrive at \eqref{EQN:COND-PRIMES}.

\textbf{Total contribution from $S_d$ : } We compute the contribution from the non-trivial characters $\bmod{~} N/d$ for each divisor $d$ of $N$. Writing in terms of special values of purely primitive functions, we have 
\begin{flalign*}
	S_d = \frac{1}{d} \sideset{}{'}\sum_{e \mid N/d}L(1,\D_{N/d}(e,f_d)),
\end{flalign*}
where $\sideset{}{'}\sum$ indicates the summation over the divisors of $ N/d $ excluding $1$. Therefore, substituting the above expression for $S_d$ in $\sum_{d \mid N}S_d = 0$ and interchanging the summation, we have : 
\begin{equation*}
    \sideset{}{'}\sum_{e \mid N} \sum_{d \mid N/e} \frac{1}{d} L(1,\D_{N/d}(e,f_d)) =0.
\end{equation*}
Setting $\beta_{N/d,e} = \prod\limits_{\substack{p \mid N/d\\p \nmid e}}(1-\sigma_p^{-1}p^{-1})$ and applying Lemma \ref{LEM:PRIMPART-ALMOST-SAME-PROJPART}, we have 
\begin{equation}\label{EQN:REARRANGED-PRIM-PART}
	\sideset{}{'}\sum_{e \mid N} \sum_{d \mid N/e} \frac{1}{d} L(1,\mathcal{D}_{e}(e,\beta_{N/d,e}.\Proj_{e}^{N/d}(f_d))) =0.
\end{equation}
We note that the inner sum (denoted by $\alpha_{e}$) is an element of $W_{e}^{pr}$ and therefore, we have 
\[ \sideset{}{'}\sum_{d \mid N} \alpha_d = 0, \text{ with } \alpha_d \in W_d^{pr}(d).\]

\textbf{Multiple Baker-Birch Application : } We recall that 
\begin{equation}\label{EQN:KEY-POINT}
	\text{ The map } \text{Ev}_1 : \mathbf{F_D}(N)^0 \to \mathbb{C} \text{ given by } f \mapsto L(1,f) \text{ is injective. } \tag{$ \textbf{Ev}_1 $}
\end{equation}
Therefore, by \eqref{EQN:DIRECT-DECOMP-DIRITYPE-FUNCTION} and Corollary \ref{COR:SAME-DPRIM-VAL} we note that $\sum_{d \mid N}W_d^{pr}(d)$ is a direct sum. Hence we conclude that $\alpha_d = 0$ for all divisors $d \mid N$. With the help of Theorem \ref{TH:BBW}, this translates to, 
\begin{equation*}
    \D_{e}(e,\sum_{d \mid N/e} \frac{1}{d} \beta_{N/d,e}.\Proj_{e}^{N/d}(f_d)) =0
\end{equation*}
thereby proving \eqref{EQN:INIT-COND-DIVISORS} for all divisors $e \neq 1$ of $N$ by Remark \ref{REM:DEF:IMPRIM:REINTEPRET}. It remains to verify the condition for $D=N$ and it can be shown that this is the same as saying $\sum_{a=1}^Nf(a) = 0$. We prove the case $D=N$ by computing $\Proj{d}_1$ where $d$ runs over the divisors of $N$. By Remark \ref{REM:PROJ} and noting that the Galois action is trivial on the rational numbers, we sum all the terms mentioned in $(V_N')$ together, we find that the overall sum is $N^{-1}(\sum_{a=1}^Nf(a))$ which is zero.
\end{proof}
We observe a curious property about $(V_p^{pr})$ for a prime divisor $p$ of $N$ under certain conditions. 

\textbf{Additional Terminology :} Let $f$ be an arithmetic function of period $N$ with $\sum_{a=1}^N f(a) = 0$ and $u$ be an algebraic number which is not 0 or 1. We have seen that $L(1,f)$ is a $\overline{\mathbb{Q}}$ linear form in logarithm of algebraic numbers. We write $L(1,f) = b \log u + C$ where $C $ lies in the orthogonal complement of $ \mathbb{Q}\langle \log u \rangle$. By \textbf{coefficient of} $\log u$ in the expansion of $L(1,f)$, we mean the value $b$.

\begin{Cor}\label{COR:LOGP0-CONSEQUENCE}
Let $N$ be a natural number greater than one. Suppose there exists a prime divisor $p$ of $N$ such that $v_p(N) = 1$. Let $f$ be a rational valued periodic function of period $N$ such that $\sum_{a=1}^N f(a) = 0$. Then the coefficient of $\log p$ in the expansion of $L(1,f)$ is given by $\frac{-p}{(p-1)N}\sum_{a=1}^{N/p}f(pa)$. In particular if $L(1,f)$ is zero, then $\sum_{a=1}^{N/p}f(pa) = 0$. 
\end{Cor}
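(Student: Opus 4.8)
The plan is to read off the coefficient of $\log p$ directly from the computation already carried out in the proof of Theorem~\ref{TH:NEW-NECESSARY-SUFFCIENT-VANISHING-L1F}, and then to simplify the resulting rational combination using the hypothesis $v_p(N)=1$ together with $\sum_{a=1}^N f(a)=0$. Recall that in that proof $L(1,f)$ was split as $\sum_{d\mid N}S_d+\sum_{d\mid N}\langle f_d,\chi_{0,N/d}\rangle R_d$, where $\sum_d S_d$ is a $\bQ$-linear form in logarithms of units of $\Z[\zeta_N]$, while the contribution of the $R_d$ is the $\mathbb{Q}$-linear combination of $\{\log\ell:\ell\mid N\}$ displayed in \eqref{EQN:R_d-CONTRIBUTION}. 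By Lemma~\ref{LEM:ALG-UNITS-INDEPENDENT-PRIMES-LOG} the span of these unit logarithms meets $\bQ\langle\log\ell:\ell\mid N\rangle$ only in $0$, and by Baker's theorem (Theorem~\ref{TH:BAKER}) the numbers $\{\log\ell:\ell\mid N\}$ are linearly independent; hence the coefficient of $\log p$ in $L(1,f)$, in the sense of the coefficient of $\log u$ defined above, is well defined and equals its coefficient in \eqref{EQN:R_d-CONTRIBUTION}.

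Writing $F(d):=\sum_{a=1}^{N/d}f_d(a)$, the next step is to extract that coefficient. In $\log d-\sum_{\ell\mid N/d}\tfrac{\log\ell}{\ell-1}$ the number $\log p$ appears with coefficient $v_p(d)$ coming from $\log d$, and with coefficient $-\tfrac{1}{p-1}$ coming from the second sum precisely when $p\mid N/d$, that is when $v_p(d)<v_p(N)$. Thus the coefficient of $\log p$ in $L(1,f)$ equals $-\tfrac1N$ times the left-hand side of \eqref{EQN:COND-PRIMES}, namely $-\tfrac1N\sum_{d\mid N}F(d)\big(v_p(d)-\tfrac{\mathbf{1}_p^N(d)}{p-1}\big)$. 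Since $v_p(N)=1$, every divisor $d\mid N$ has $v_p(d)\in\{0,1\}$: if $p\mid d$ the bracket equals $1$, and if $p\nmid d$ it equals $-\tfrac1{p-1}$. Hence the coefficient becomes $-\tfrac1N\big(A-\tfrac{1}{p-1}B\big)$, where $A=\sum_{d\mid N,\,p\mid d}F(d)$ and $B=\sum_{d\mid N,\,p\nmid d}F(d)$.

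The decisive simplification comes from the combinatorial identity $\sum_{d\mid N}F(d)=\sum_{n=1}^N f(n)$. I would prove it by expanding $F(d)=\sum_{(a,N/d)=1}f(da)$ and substituting $n=da$: for each $n\in\{1,\dots,N\}$ the pair $(d,a)$ with $d\mid N$, $d\mid n$ and $\gcd(n/d,N/d)=1$ is unique, namely $d=\gcd(n,N)$, so each $f(n)$ is counted exactly once. The hypothesis $\sum_{a=1}^N f(a)=0$ then gives $A+B=0$, whence $B=-A$ and $A-\tfrac{1}{p-1}B=A\big(1+\tfrac1{p-1}\big)=\tfrac{p}{p-1}A$. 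The same substitution, restricted to those $n$ with $p\mid n$ (equivalently $p\mid\gcd(n,N)=d$, using $p\mid N$), identifies $A=\sum_{p\mid n,\,1\le n\le N}f(n)=\sum_{a=1}^{N/p}f(pa)$. Combining, the coefficient of $\log p$ equals $-\tfrac{p}{(p-1)N}\sum_{a=1}^{N/p}f(pa)$, as claimed; and if $L(1,f)=0$ this coefficient must vanish, forcing $\sum_{a=1}^{N/p}f(pa)=0$ since $p/((p-1)N)\neq0$.

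The main obstacle is the bookkeeping in the first paragraph: one must be certain that $\log p$ does not leak into the unit part $\sum_d S_d$, so that speaking of the coefficient of $\log p$ is legitimate, and this is exactly what Lemma~\ref{LEM:ALG-UNITS-INDEPENDENT-PRIMES-LOG} supplies. The two counting identities are then routine; the sole role of $v_p(N)=1$ is to collapse the bracket $v_p(d)-\mathbf{1}_p^N(d)/(p-1)$ to the two values $1$ and $-\tfrac1{p-1}$, which is what makes the final closed form so clean.
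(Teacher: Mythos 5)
Your proposal is correct and follows essentially the same route as the paper: both read the coefficient of $\log p$ off the $R_d$-contribution \eqref{EQN:R_d-CONTRIBUTION} (equivalently the left side of \eqref{EQN:COND-PRIMES}), use $v_p(N)=1$ to reduce the bracket $v_p(d)-\mathbf{1}_p^N(d)/(p-1)$ to the two values $1$ and $-1/(p-1)$, invoke $\sum_{d\mid N}M_d=\sum_{a=1}^Nf(a)=0$, and identify $\sum_{p\mid d\mid N}M_d$ with $\sum_{a=1}^{N/p}f(pa)$. Your splitting into $A$ and $B$ versus the paper's factoring of $pv_p(d)-1$ is only a cosmetic difference, and your explicit justification via Lemma \ref{LEM:ALG-UNITS-INDEPENDENT-PRIMES-LOG} that $\log p$ cannot leak into the unit part is exactly what the paper uses implicitly.
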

\begin{proof}
In this case, we have $1_p(n) = 1 - v_p(n)$ as $v_p(N) = 1$. We explicitly compute $(V_p^{pr})$ in this case. We set $ M_d $ to be the restricted sum of the arithmetical function $ f_d $ that is $M_d = \sum_{a=1}^{N/d} f_d(a)$. Then the coefficient of $\log p$ in the expansion of $L(1,f)$ is given by the following :
\begin{flalign*}
    \frac{-1}{N}\sum_{d \mid N}M_d(v_p(d) - \frac{1_p(d)}{p-1}) &= \frac{-1}{(p-1)N}\sum_{d \mid N} M_d \big((p-1)v_p(d) - (1 - v_p(d))\big)\\
    &= \frac{-p}{(p-1)N}\sum_{d \mid N}M_d v_p(d) =\frac{-p}{(p-1)N}\sum_{p \mid d \mid N}M_d 
\end{flalign*}
To get the second last equality, we use $\sum_{d \mid N} M_d = \sum_{a=1}^N f(a) = 0$. We can further write 
\[\sum_{p \mid d \mid N} M_d = \sum_{d \mid N/p} M_{pd} = \sum_{a=1}^{N/p} f(pa). \] 
We obtain the desired corollary with the above computation.
\end{proof}

\begin{Rem}
We now connect our vanishing criterion and Property \ref{PROPERTY-U}. Let $f$ be a rational valued function of period $N$. Tracing back our steps in the proof, by \eqref{EQN:S_d-R_d-BEFORE-SPLIT}, \eqref{EQN:REARRANGED-PRIM-PART} we note that 
\begin{flalign}\label{EQN:SPECIAL-VALUE-L1F-DIRECT-SUM}
    L(1,f) &= \sum_{d \mid N} S_d + \langle f_d,\chi_{0,N/d}\rangle R_d \nonumber \\
           &= \sideset{}{'}\sum_{e \mid N} L(1,\mathcal{D}_{e}(e,\sum_{d \mid N/e} \frac{1}{d}\beta_{N/d,e}\Proj_{e}^{N/d}(f_d))) + \sum_{d \mid N} \langle f_d,\chi_{0,N/d}\rangle R_d.
\end{flalign}
where the first sum varies over the divisors of $N$ excluding $1$. Therefore, for some proper divisor $D$ of $N$ if $\eqref{EQN:INIT-COND-DIVISORS}$ is not satisfied, then by Remark \ref{REM:DEF:IMPRIM:REINTEPRET} we have 
\[\mathcal{D}_{N/D}(\frac{N}{D},\sum_{d \mid D} \frac{1}{d}\beta_{N/d,N/D}\Proj_{N/D}^{N/d}(f_d)) \neq 0. \]
Since $D \neq N$, by Remark \ref{REM:DPRIMITIVE-PROPERTY-U} we note that the above arithmetic function satisfies Property \ref{PROPERTY-U}. From here we conclude that $f$ satisfies Property \ref{PROPERTY-U} as the first sum mentioned in \eqref{EQN:SPECIAL-VALUE-L1F-DIRECT-SUM} is a direct sum.  
\end{Rem}
\section{Association to Group Ring}\label{SEC:GROUP-RINGS}
In this section, we associate a Dirichlet type function $f \in \mathbf{F_D}(N)$ with an element $\mathcal{P}(f)$ in the group ring $\mathbb{Q}[\Gal{\mathbb{\mathbb{Q}(\zeta_N)}/\mathbb{Q}}]$ and prove imprimitivity results by finding corresponding annihilator ideals. Our association of the function $f$ with the element $\mathcal{P}(f)$ is strictly not necessary, but the advantage is that it is easy to keep track of the action of the Galois element on the whole of $f$. Throughout, we set $G=\Gal{\mathbb{\mathbb{Q}(\zeta_N)}/\mathbb{Q}}$.

We recall the definition of $\mathcal{P}(f)= \sum_{(a,N)=1}f(a) \sigma_a$. We note that $\mathcal{P}(\sigma_bf) = \sigma_b\mathcal{P}(f)$. This gives a $\mathbb{Q}[G]$-module isomorphism between $\mathbf{F_D}(N)$ and $\mathbb{Q}[G]$. 
\par
We now give a simple description to understand when an arithmetic function is imprimitive $ \bmod{~} N $ and also provide an expression for evaluating $ \Proj_d^N(f) $ in terms of $ f $. Before stating the Proposition, we evaluate Dirichlet characters on $\Gal{\mathbb{Q}(\zeta_N)/\mathbb{Q}}$ instead of $(\mathbb{Z}/N\mathbb{Z})^*$. This is done so that we do not have to explicitly define the automorphism while considering the character decomposition of the action. This is also done in \cite{WASHINGTON}. If $\sigma_a(\zeta_N) = \zeta_N^a$, then we define {$\chi(\sigma_a) = \overline{\chi}(a)$}. The complex conjugation is done so that we have the following : 
\[ f = \sum_{\chi \bmod{~} N}c_\chi \chi \implies \sigma_a f = \sum_{\chi \bmod{~} N} \chi(\sigma_a)c_\chi \chi . \]
\begin{Prop}\label{PROP:POLY-NPRIM-ANNHILATE} 
Let $f$ be a Dirichlet type function of period $N$. 
\begin{enumerate}
    \item For each prime $p$ dividing $N$, let $\tau_p^{(N)}$ be a generator of the group $\Gal{\mathbb{Q}(\zeta_N)/\mathbb{Q}(\zeta_{N/p})}$. \begin{equation}\label{EQN:N-IMPRIMITIVE-POLY-CONDN} 
    f \text{ is }  \text{ imprimitive } \bmod{} N \iff \prod_{p \mid N}(1-\tau_p^{(N)}) \mathcal{P}(f) = 0.
    \end{equation}
    \item For any divisor $d \mid N$, we have 
    \begin{equation}\label{EQN:PROJ-NDF-DESCRIPTION}
	    \mathcal{P}(\Proj_d^N(f)) = \frac{\varphi(d)}{\varphi(N)}\mathcal{P}(f)\big\vert_{\mathbb{Q}(\zeta_d)}.  
    \end{equation}
\end{enumerate}
\end{Prop}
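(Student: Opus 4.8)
The plan is to diagonalize the commutative group algebra and read off both statements character-by-character. Since $G=\Gal{\mathbb{Q}(\zeta_N)/\mathbb{Q}}$ is abelian, every Dirichlet character $\chi \bmod N$ extends to an algebra homomorphism $\mathbb{C}[G]\to\mathbb{C}$ sending $\sum_a b_a\sigma_a \mapsto \sum_a b_a\chi(\sigma_a)$, and these homomorphisms together give an isomorphism $\mathbb{C}[G]\cong\prod_{\chi\bmod N}\mathbb{C}$. Under this, $\chi(\mathcal{P}(f))=\sum_{(a,N)=1}f(a)\chi(\sigma_a)=\sum_a f(a)\overline{\chi}(a)=\varphi(N)\langle f,\chi\rangle$. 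Because $\chi\mapsto\overline{\chi}$ permutes the primitive characters $\bmod N$, Definition \ref{DEF:PRIMITIVE} says precisely that $f$ is imprimitive $\bmod N$ if and only if $\chi(\mathcal{P}(f))=0$ for every primitive $\chi\bmod N$. This reformulation is what makes both parts tractable.

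For part (1), I would evaluate the group-ring element at each character: $\chi\big(\prod_{p\mid N}(1-\tau_p^{(N)})\mathcal{P}(f)\big)=\prod_{p\mid N}(1-\chi(\tau_p^{(N)}))\,\chi(\mathcal{P}(f))$. Since the evaluation maps are jointly injective, the element vanishes if and only if every such product vanishes. The key input is the dictionary between the conductor of $\chi$ and the values $\chi(\tau_p^{(N)})$: a character $\chi\bmod N$ has $\cond{\chi}<N$ exactly when $\chi$ is trivial on $\Gal{\mathbb{Q}(\zeta_N)/\mathbb{Q}(\zeta_{N/p})}$ for some prime $p\mid N$, and since $\tau_p^{(N)}$ generates this cyclic group, that is exactly when $\chi(\tau_p^{(N)})=1$ for some $p$, i.e. when the scalar $\prod_{p\mid N}(1-\chi(\tau_p^{(N)}))$ is zero. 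Hence for imprimitive $\chi$ the scalar factor already vanishes and the equation holds automatically, while for primitive $\chi$ the factor is nonzero and the equation is equivalent to $\chi(\mathcal{P}(f))=0$. Combining the two cases with the reformulation of the first paragraph yields the stated equivalence via Remark \ref{REM:DEF:IMPRIM:REINTEPRET}.

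For part (2), I would interpret the right-hand side concretely: $\mathcal{P}(f)\big\vert_{\mathbb{Q}(\zeta_d)}$ is the image of $\mathcal{P}(f)$ under the restriction map $G\to\Gal{\mathbb{Q}(\zeta_d)/\mathbb{Q}}$ induced by reduction $(\mathbb{Z}/N\mathbb{Z})^*\to(\mathbb{Z}/d\mathbb{Z})^*$. Grouping $\sum_a f(a)\,\sigma_a\vert_{\mathbb{Q}(\zeta_d)}$ by the residue $b=a\bmod d$ shows its $\sigma_b$-coefficient equals $\sum_{a\equiv b\,(d),\,(a,N)=1}f(a)$. On the other side, using the character description $\Proj_d^N(f)=\sum_{\chi\bmod d}c_{\chi\chi_{0,N}}\chi$ of Definition \ref{DEFN:INTRO-PROJ} together with $c_{\chi\chi_{0,N}}=\varphi(N)^{-1}\sum_{(a,N)=1}f(a)\overline{\chi}(a)$, I would evaluate $\Proj_d^N(f)(b)$, interchange the two sums, and apply orthogonality of the characters $\bmod d$; the inner character sum collapses to $\varphi(d)$ on the residue class $a\equiv b\,(d)$ and to $0$ otherwise, giving $\Proj_d^N(f)(b)=\tfrac{\varphi(d)}{\varphi(N)}\sum_{a\equiv b\,(d)}f(a)$. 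Matching this with the coefficient computed on the restricted side proves the identity.

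I expect the main obstacle to lie entirely in the conductor characterization used in part (1): one must check that for every prime $p\mid N$ the group $\Gal{\mathbb{Q}(\zeta_N)/\mathbb{Q}(\zeta_{N/p})}$ is cyclic (of order $p$ when $p^2\mid N$ and of order $p-1$ when $p\Vert N$), so that a single generator $\tau_p^{(N)}$ detects nontriviality of $\chi$ on it, and that $\cond{\chi}=d<N$ genuinely forces triviality on at least one such subgroup — this amounts to producing a prime $p\mid N$ with $dp\mid N$, which exists since any prime dividing $N/d$ works. Once this case analysis is in place, everything else is orthogonality bookkeeping and the joint injectivity of the character evaluations.
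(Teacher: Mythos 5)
Your proposal is correct and follows essentially the same route as the paper: part (1) rests on the same dictionary between $\cond{\chi}$ and the values $\chi(\tau_p^{(N)})$ applied to the character decomposition (the paper acts with $\prod_p(1-\tau_p^{(N)})$ on $f=\sum_\chi c_\chi\chi$ directly, while you evaluate the group-ring element at each $\chi$ and invoke joint injectivity, which is the same computation in Fourier-dual packaging), and part (2) is the same orthogonality argument, with your direct interchange of sums replacing the paper's averaging operator $\sum_{a\equiv 1\bmod d}\sigma_a^{-1}$. No gaps.
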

In the above, restricting an element of $\Gal{\mathbb{Q}(\zeta_N)/\mathbb{Q}}$ to $ \mathbb{Q}(\zeta_d) $ gives us an element of $ \Gal{\mathbb{Q}(\zeta_d) / \mathbb{Q}} $. 
\begin{proof}[Proof of Proposition \ref{PROP:POLY-NPRIM-ANNHILATE}]
The proof of the Proposition relies on looking at the character decomposition of the Dirichlet type function $f \in \mathbf{F_D}(N)$. For the function $f$, we write its character decomposition as follows : $f = \sum\limits_{\chi \bmod{~} N} c_\chi \chi$.   
\begin{enumerate}
    \item We have 
    \[(1-\tau_p^{(N)})f = \sum_{\chi \bmod{} N}(1-\chi(\tau_p^{(N)}))c_\chi \chi.\]
    Since $\tau_p^{(N)}$ fixes $\mathbb{Q}(\zeta_{N/p})$ we conclude that (See \cite[Page 20]{WASHINGTON}) :
    \begin{equation*}
        \chi(\tau_p^{(N)}) = 1 \text{ if and only if } p^{v_p(N)} \nmid \cond{\chi}.
    \end{equation*}
    Hence 
    \begin{equation*}
        (1-\tau_p^{(N)})f = \sum_{\substack{\chi \bmod{} N\\ p^{v_p(N)} \mid \cond{\chi} }}(1-\chi(\tau_p^{(N)}))c_\chi \chi.
    \end{equation*}
    Iterating the above equation for all prime divisors $p$ of $N$, we have 
    \begin{equation*}
        \prod_{p \mid N}(1-\tau_p^{(N)})f = \sum_{\substack{\chi \bmod{} N\\ \cond{\chi}=N}}\prod_{p \mid N}(1-\chi(\tau_p^{(N)}))c_\chi \chi.
    \end{equation*}
    Since $f$ is imprimitive $\bmod{~}N$, we have $c_\chi = 0$ for all characters $\chi$ of conductor $N$ by Remark \ref{REM:DEF:IMPRIM:REINTEPRET}. Identifying $f$ with $\mathcal{P}(f)$, we obtain the result. The reverse implication is immediate by the orthogonality of Dirichlet characters $ \bmod{~} N$.
    \item Consider the operator $\beta:=\sum\limits_{\substack{a \in(\Z/N\Z)^*\\a \equiv 1 \bmod{} d}}\sigma_a^{-1}$. For a Dirichlet type function $f$ of period $N$, we have 
    \begin{equation*}
        \beta f = \sum_{\chi \bmod{} N} (\sum_{\substack{a \in(\Z/N\Z)^*\\a \equiv 1 \bmod{} d}}\overline{\chi}(\sigma_a))c_\chi \chi=\frac{\varphi(N)}{\varphi(d)}\sum_{\substack{\chi \bmod{} N\\ \cond{\chi} \mid d}} c_\chi\chi. 
    \end{equation*}
    The second equality above is true as for a given character $\chi$, the required character sum is evaluated as follows :  
    \begin{equation*}
        \sum_{\substack{a \in(\Z/N\Z)^*\\a \equiv 1 \bmod{} d}}\chi(\sigma_a) = 
        \begin{cases}
            \varphi(N)/\varphi(d) &\text{ if } \cond{\chi} \mid d \\
            0 &\text{ else}.
        \end{cases}
    \end{equation*}
    Comparing $\beta f$ with $\Proj_d^N(f)$, we observe that 
    \[\Proj_d^N(f)(n)= \frac{1}{\varphi(N)/\varphi(d)}(\beta f)(n) \text{ whenever } (n,N)=1.\] 
    By choosing a representative $ n $ which is co-prime to $ N $, we can expand the above as : 
    \[  \Proj_d(f)(n) = \frac{\varphi(d)}{\varphi(N)} (\sum_{\substack{b=1\\b \equiv 1 \bmod{} d}}^N \sigma_b^{-1})f(n) =  \frac{\varphi(d)}{\varphi(N)} \sum_{\substack{b=1\\b \equiv 1 \bmod{} d}}^N f(bn).  \]
    We now evaluate $\mathcal{P}(\Proj_d^N(f))$ in terms of $\mathcal{P}(f)$. We have 
    \begin{flalign*}
        \mathcal{P}(\Proj_d^N(f)) &= \frac{\varphi(d)}{\varphi(N)}\sum_{a \in (\Z/d\Z)^*}\sum_{\substack{b \in (\Z/N\Z)^*\\b \equiv 1 \bmod{} d}} f(ab)\sigma_a.\\
				  &= \frac{\varphi(d)}{\varphi(N)} \mathcal{P}(f)\vert_{\mathbb{Q}(\zeta_d)},
    \end{flalign*}
    where in the last step, we restrict each element of $ \Gal{\mathbb{Q}(\zeta_N) / \mathbb{Q}}$ to $ \Gal{\mathbb{Q}(\zeta_d) / \mathbb{Q} } $. %
\end{enumerate}
\end{proof} 
\begin{Rem}
We actually show that there is a one to one correspondence between $\mathbf{F_D^{pr}}(N;N)$ and the ideal $\prod_{p \mid N} (1 - \tau_p^{(N)})$ in $\mathbb{Q}[G]$. More precisely, the map $\prod_{p \mid N} (1 - \tau_p^{(N)})$ maps $\mathbf{F_D}(N)$ onto $\mathbf{F_D^{pr}}(N;N)$. When this map is restricted to $\mathbf{F_D^{pr}}(N;N)$, this gives an isomorphism. To arrive at $(V_D)$ from \eqref{EQN:INIT-COND-DIVISORS}, we apply the above Proposition for evaluating $\Proj$ and using the factor $\prod_{p \mid N} (1-\tau_p^{(N)})$ to get the result. More precisely if $L(1,f) = 0$, then for a given divisor $D$ of $N$ the condition \eqref{EQN:INIT-COND-DIVISORS} is same as saying that $(V_D)$ is satisfied. 
\end{Rem}

For an odd squarefree integer $ N $, we note that the restriction of a generator $ \tau_p^{(N)} $ to $\mathbb{Q}(\zeta_{N/q})$ is also a generator of  $ \Gal{\mathbb{Q}(\zeta_{N/q}) / \mathbb{Q}(\zeta_{N / pq })} $ for a prime divisor $ q \neq p $, that is we can choose generators $\tau_p^{(N)}$ and $\tau_p^{(N/q)}$ satisfying $\tau_p^{(N)}\vert_{\mathbb{Q}(\zeta_{N/q})} = \tau_p^{(N/q)}$. This can be verified as $ \Gal{\mathbb{Q}(\zeta_N) / \mathbb{Q}(\zeta_{N / p })} \cong \Gal{\mathbb{Q}(\zeta_p) / \mathbb{Q}} $. This fact is crucial and will be repeatedly used. For the purpose of simplicity, from now we refer to these generators as $\tau_p$. Therefore for a squarefree integer $N$ having a prime divisor $p$, we have $\tau_p^{-1}(\zeta_p) = \zeta_p^{g_p}$, where $g_p \bmod{~} p$ generates $(\Z/p\Z)^*$ and $\tau_p(\zeta_q) = \zeta_q$ for primes $q \neq p$.

\par
From Proposition \ref{PROP:POLY-NPRIM-ANNHILATE}, we observe that for an integer valued arithmetic function $f$ of period $N$, $\Proj_{N/p}^N(f)$ need not be integer valued. However, under some constraints, we show that (up to a multiple of a zero divisor in $\mathbb{Z}[\Gal{\mathbb{Q}(\zeta_N/p)/\mathbb{Q}}]$) this arithmetic function can be integer valued.
\begin{Prop}\label{PROP:IMPRIMITIVE-INDUCTION}
	let $N$ be an odd squarefree integer. Assume that the Dirichlet type arithmetic function $f$ takes values integer values and is imprimitive $\bmod{~}N$. Then 
	\begin{equation*}
		\prod_{\substack{q \mid N\\q \neq p}}(1-\tau_q) \mathcal{P}(\Proj_{N / p}^N(f)) \in \Z[\Gal{\mathbb{Q}(\zeta_{N / p }) / \mathbb{Q} }],  
	\end{equation*}
	where the product runs over the prime divisors $q$ of $N$ except $p$. In fact, \[\prod\limits_{\substack{q \mid N\\q \neq p}}(1-\tau_q) \mathcal{P}(\Proj_{N / p}^N(f)) = \prod\limits_{\substack{q \mid N\\q \neq p}}(1-\tau_q) \mathcal{P}(h),\]where the arithmetic function $h \in \mathbf{F_D}(N/p)$ is given by $h(n) = f(\widetilde{n})$ where $\widetilde{n}$ is a representative of a residue $\bmod{~}N$ satisfying $\widetilde{n}\equiv 1 \bmod{~}p$ and $\widetilde{n} \equiv n \bmod{~} N/p$.
\end{Prop}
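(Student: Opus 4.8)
The plan is to translate everything into the integral group ring $\mathbb{Z}[G]$, $G=\Gal{\mathbb{Q}(\zeta_N)/\mathbb{Q}}$, and exploit the internal direct product coming from the factorisation $N=p\cdot(N/p)$ with $p\nmid N/p$. First I would apply part (2) of Proposition \ref{PROP:POLY-NPRIM-ANNHILATE} to write
\[
\mathcal{P}(\Proj_{N/p}^N(f))=\frac{1}{p-1}\,\mathcal{P}(f)\big\vert_{\mathbb{Q}(\zeta_{N/p})},
\]
using that $\varphi(N)/\varphi(N/p)=\varphi(p)=p-1$ since $N$ is squarefree. Writing $\mathrm{res}\colon\mathbb{Z}[G]\to\mathbb{Z}[\bar G]$ with $\bar G:=\Gal{\mathbb{Q}(\zeta_{N/p})/\mathbb{Q}}$ for the restriction ring homomorphism, which sends $\tau_q^{(N)}\mapsto\tau_q$ for $q\neq p$ and $\tau_p^{(N)}\mapsto 1$, the quantity to be studied becomes $\tfrac{1}{p-1}\,\mathrm{res}(X)$, where $X:=\prod_{q\mid N,\,q\neq p}(1-\tau_q^{(N)})\,\mathcal{P}(f)\in\mathbb{Z}[G]$ (integral because $f$ is integer valued).

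Next I would feed in imprimitivity. By \eqref{EQN:N-IMPRIMITIVE-POLY-CONDN}, imprimitivity of $f$ $\bmod N$ means $\prod_{q\mid N}(1-\tau_q^{(N)})\mathcal{P}(f)=0$, i.e.\ $(1-\tau_p^{(N)})X=0$. Hence $X$ is fixed by the generator $\tau_p^{(N)}$ of $H:=\Gal{\mathbb{Q}(\zeta_N)/\mathbb{Q}(\zeta_{N/p})}=\ker(\mathrm{res})$, so $X$ is constant on the cosets of $H$. As each coset has $p-1$ elements all carrying the same coefficient, $\mathrm{res}(X)$ equals $(p-1)$ times an element of $\mathbb{Z}[\bar G]$; dividing by $p-1$ already yields the integrality assertion.

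For the sharper identity with $h$, I would use the internal direct product $G=H\times K$, where $K:=\{\sigma_a : a\equiv 1\bmod p\}$ maps isomorphically onto $\bar G$ under $\mathrm{res}$ and contains each $\tau_q^{(N)}$ with $q\neq p$ (these fix $\zeta_p$ by our choice of generators). I then form the integral lift $\widetilde{\mathcal{P}}(h)=\sum_{a\equiv 1(p)}f(a)\sigma_a\in\mathbb{Z}[K]$, which restricts to $\mathcal{P}(h)$, so that $\prod_{q\neq p}(1-\tau_q)\mathcal{P}(h)=\mathrm{res}(Y)$ with $Y:=\prod_{q\neq p}(1-\tau_q^{(N)})\widetilde{\mathcal{P}}(h)$. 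Since $\mathcal{P}(f)-\widetilde{\mathcal{P}}(h)=\sum_{a\not\equiv 1(p)}f(a)\sigma_a$ is supported on $G\setminus K$, and multiplication by the element $\prod_{q\neq p}(1-\tau_q^{(N)})\in\mathbb{Z}[K]$ leaves the $H$-component of every group element unchanged, the difference $X-Y$ remains supported off $K$. Finally I compare $\tfrac{1}{p-1}\mathrm{res}(X)$, whose coefficients are read from the $K$-representatives thanks to the coset-constancy of $X$, with $\mathrm{res}(Y)$, whose coefficients are read directly off $Y\in\mathbb{Z}[K]$; the vanishing of $X-Y$ on $K$ forces these to agree, which is exactly the claimed equality.

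The hard part will be the bookkeeping of the last step: one must check carefully that coset-constancy of $X$ identifies its coefficient at the $K$-representative of a coset with the common coset value, and that $Y$ being supported on $K$ lets $\mathrm{res}$ recover its coefficients faithfully, so that ``$X-Y$ vanishes on $K$'' is precisely the statement needed. The conceptual content — imprimitivity forcing $\tau_p$-invariance, together with $\prod_{q\neq p}(1-\tau_q^{(N)})$ lying in $\mathbb{Z}[K]$ and hence preserving $H$-components — is what makes the a priori rational operator $\Proj_{N/p}^N$ collapse onto the concrete integral function $h$.
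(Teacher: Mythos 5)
Your proof is correct and is essentially the paper's own argument in different notation: the paper decomposes $X=\prod_{q\neq p}(1-\tau_q)\mathcal{P}(f)$ as $\sum_{i=0}^{p-2}\mathbf{a_i}\tau_p^i$ with $\mathbf{a_i}$ supported on the coset $K\tau_p^i$ (so the $\mathbf{a_i}$ are exactly the pieces of your $X$ along $H$-cosets), uses imprimitivity to get $\mathbf{a_i}=\mathbf{a_{i+1}}$ (your coset-constancy), and identifies $\frac{1}{p-1}\sum_i\mathbf{a_i}=\mathbf{a_0}$ with your $Y=\prod_{q\neq p}(1-\tau_q^{(N)})\widetilde{\mathcal{P}}(h)$. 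The support bookkeeping in your final step goes through exactly as you describe.
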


For the proof of the above Proposition we require functions similar to that of $h$, so we set 
\[\mathbf{a_i} : = \prod_{\substack{q \mid N\\q \neq p}}(1-\tau_q)\sum\limits_{j \bmod{~} (N/p)^*} (f(\widetilde{n_j})) \sigma_j,\] where $\widetilde{n_j} \equiv g_p^{-i} \bmod{~} p$ and $\widetilde{n_j} \equiv j^{-1} \bmod{~} N/p$. The element $\sigma_j$ of $\Gal{\mathbb{Q}(\zeta_{N})/\mathbb{Q}}$ denotes the automorphism {$\sigma_j(\zeta_{N/p}) = \zeta_{N/p}^j$} and $\sigma_j$ fixes $\mathbb{Q}(\zeta_p)$. Depending on our choice of fields, we may identify $\sigma_j$ as an element of $\Gal{\mathbb{Q}(\zeta_{N})/\mathbb{Q}}$ or $\Gal{\mathbb{Q}(\zeta_{N/p})/\mathbb{Q}}$. 
\begin{proof}
	Let $ H = \langle \tau_p \rangle $, a subgroup of $ \Gal{\mathbb{Q}(\zeta_N) / \mathbb{Q}} $. Since $f$ is imprimitive $ \bmod{~} N $, $\mathcal{P}(f)$ satisfies \eqref{EQN:N-IMPRIMITIVE-POLY-CONDN}. From the definition of $\mathbf{a_i}$ we have 
	\begin{flalign*}
		 \sum_{i=0}^{p-2} \mathbf{a_i} \tau_p^i&=\prod_{\substack{q \mid N\\q \neq p}}(1-\tau_q)\mathcal{P}(f).\\ 
		                                & = \tau_p\prod_{\substack{q \mid N\\q \neq p}}(1-\tau_q)\mathcal{P}(f) \qquad (\text{ as }f \text{ is imprimitive} \bmod{~}N).\\ 
		                                &= \sum_{i=0}^{p-2} \mathbf{a_i} \tau_p^{i+1}.
	\end{flalign*}
	Therefore $\mathbf{a_i} = \mathbf{a_{i+1}}$ for all $i$. Evaluating the expression as mentioned in \eqref{EQN:PROJ-NDF-DESCRIPTION} we have
	\[
		\prod_{\substack{q \mid N\\q \neq p}}(1-\tau_q) \mathcal{P}(\Proj_{N / p}^{N}(f)) =   \prod_{\substack{q \mid N\\q \neq p}}(1-\tau_q) \frac{1}{p-1}\mathcal{P}(f)\vert_{H=\{1\}} = \frac{1}{p-1} \sum_{i=0}^{p-2} \mathbf{a_i} \tau_p^i\vert_{\tau_p=1} = \mathbf{a_0}.
	\]
	In the last step, we used that $H \cap \langle\tau_q \colon q \neq p \rangle = 1$ and we have $\mathbf{a_i} = \mathbf{a_{i+1}}$ for all $i$. This proves the Proposition as $\mathbf{a_0} \in \Z[\Gal{\mathbb{Q}(\zeta_{N/p})/\mathbb{Q}}]$. The second part of the Proposition is immediate as $\mathbf{a_0} = \mathcal{P}(h)$ (by the definition of $\mathbf{a_i}$).  
\end{proof}
\begin{Rem}
From the above Proposition, we conclude that if a function $f$ having coefficients in a set $S$ is imprimitive $\bmod{~}N$, then we can find a function $h$ of period $N/p$ having coefficients in the same set $S$ such that \[\sideset{}{'}\prod\limits_{q \mid N/p}(1-\tau_q) \mathcal{P}(\Proj_p^N(f)) = \sideset{}{'}\prod\limits_{q \mid N/p}(1-\tau_q) \mathcal{P}(h),\] where the product runs over all the primes $q \neq p$. In the forthcoming section, we apply this observation to functions $f$ taking values in $\{\pm 1\}$. 
\end{Rem}
\begin{Rem}
The proof of the above Proposition also follows through when we take $N= 4p$ for an odd prime $p$. In this case, we have to consider $(1-\tau_4) \Proj_4^N(f)$ and $(1-\tau_p) \Proj_p^N(f)$. 
\end{Rem}
\section{Proof of Theorem \ref{TH:4p} and a counterexample by Tengely}\label{SEC:TH-4P}
In \cite[Theorem 4.8]{MURTY-SIDDHI}, the authors proved the following : Suppose $N \equiv 2 \bmod{~}4$ and the function $f$ is of period $N$ taking values in $\{ \pm 1\}$. Then $L(1,f) \neq 0$ whenever the infinite sum converges. From their argument one can deduce the following result : The coefficient of $\log 2$ in $L(1,f)$ is non-zero owing to parity obstruction. In this article, Theorem \ref{TH:4p} addresses the same question when $N$ is a prime multiple of $4$. This is in some sense the best possible as we have a counter-example \cite{TIJDEMAN} for $N=36$.

In this section, we first proceed with the proof of Theorem \ref{TH:4p}. We apply \eqref{EQN:COND-PRIMES} in a `inductive' manner. We show that $f_p = \pm \chi_{0,4}$ and $ f_4 = \pm \sigma f_2 $. We have two cases for $f_4$ and when $f_4 =\sigma f_2$, some elementary observations on the restricted sums of $ f_d $ for divisors $ d $ of $ 4p $, will give us a contradiction. However the proof is delicate when $f_4 = -\sigma f_2$. We should mention that the earlier methods for proving Erd\"{o}s conjecture do not apply in this case as we have functions $ f $ for which Property \ref{PROPERTY-U} fails. Such a function can be seen in the following example.

\begin{Ex}\label{EXAMPLE:0-UNIT-CONTRIBUTION}
Let $f$ be of period $12$ given by $f_1 = \chi_{0,12}$, $f_2 = -\chi_{0,6}$, $f_3 = \chi_{0,4}, f_4 = -\chi_{0,3}, f_6 = -\chi_{0,2}, f_{12}= -\chi_{0,1}$. We note that $\sum_{a=1}^{12} f(a) =0 $, and naturally $f_i$ are imprimitive $\bmod{~} D$ for any divisor $D \neq 1$ of $12$. Therefore $\Proj_{12/D}^{12/d}f_d = 0$ for all $D \neq 12$. Thus $L(1,f) \in \mathbb{Q}\langle\log 2,\log 3\rangle$.
\end{Ex}

In the following proof, for a divisor $ i $ of $ 4p $, we define the restricted sum of $f_i$ as follows : 
\[ M_i = \sum_{\substack{j=1\\(j,N/i)=1}}^{N/i}f_i(j). \] 
The following observations are used in the proof repeatedly : 
\begin{equation}\label{EQN:TH4P:REPEAT-OBS-PARITY}
	(1-\tau_p) \sum_{i}a_i \tau_p^i \text{ has coefficients in } \{0 ,\pm 2\} \text{ whenever } a_i \in \{ \pm 1 \} \tag{O1}. 
\end{equation}
Suppose $d= 4$ or $d$ is an odd prime. 
\begin{flalign}\label{EQN:TH4P:REPEAT-OBS-PRIMITIVE}
    \text{ If }f \in \mathbf{F}_D(d) \text{ is imprimitive} \bmod{d} \text{ then } f(a) = \text{ constant whenever} (a,d) = 1. \tag{O2}
\end{flalign}
Finally we remark that if the coefficient of $\log p$ is zero in the expansion of $L(1,f)$, then by using $\sum_{a=1}^N f(a) = 0$ and Corollary \ref{COR:LOGP0-CONSEQUENCE} we have 
\begin{equation}\label{EQN:CRUCIAL-LOGP-CONTRIBUTION-SQFREE-DIV} 
	\sum_{\substack{d \mid N\\p \mid d}} M_d = \sum_{\substack{ d \mid N \\ p \nmid d}}M_d = 0. \tag{O3}
\end{equation}

\begin{proof}[Proof of Theorem \ref{TH:4p}]
	Assume that there exists an arithmetic function $f$ taking values in $\{\pm1\}$ for which $L(1,f) = 0$. From Corollary \ref{COR:N-prim-nonvanishing}, we note that $ f_1 $ is imprimitive $\bmod{~}4p$.

	\textbf{Imprimitivity due to valuation arguments : }  By considering $(V_p)$, we have :  
	\[ \left(1-\tau_4 \right) \big((1 - \frac{\sigma_p^{-1}}{p}) \Proj_4^N(f_1) + \frac{1}{p} f_p \big) = 0. \]
	Rearranging this, we get 
	\[ (1-\tau_4) \mathcal{P}(\Proj_4^N(f_1)) + \frac{(1-\tau_4)(\mathcal{P}(f_p)) - \sigma_p^{-1} (1-\tau_4)\mathcal{P}(\Proj_4^N(f_1))}{p} = 0.  \]
	By Proposition \ref{PROP:IMPRIMITIVE-INDUCTION}, we can replace $ (1-\tau_4)\mathcal{P}(\Proj_4^N(f_1)) $ with $ (1-\tau_4) \mathcal{P}(g) $ for some appropriate $ g \in \mathbf{F_D}(4) $ taking values in $ \{\pm 1\} $ at $1$ and $3$. By \eqref{EQN:TH4P:REPEAT-OBS-PARITY}, the coefficients of $ (1-\tau_4) (\mathcal{P}(f_p) - \sigma_p^{-1}\mathcal{P}(g)) $ will lie in $ \{ 0, \pm 2, \pm 4 \} $ and this is not divisible by $  p$ unless it is zero (as p is an odd prime). Therefore, $(1- \tau_4) \mathcal{P}\left(\Proj_4^N(f_1) \right) = 0$ and this implies $(1-\tau _4) \mathcal{P}\left(f_p \right) = 0$. 
	With this, we concluded that 
	\begin{equation}\label{EQN:TH-4p-IMPRIMITIVE-MOD4}
	    \Proj_{4}^N(f_1) \text{ and } f_p  \text { are imprimitive } \bmod{~}4.
	\end{equation}
	\textbf{ Relation between $f_4$ and $f_2$ : } From $(V_4)$ we note that
	\begin{flalign*}
		(1-\tau_p) \big(\mathcal{P}(\Proj_p^N(f_1)) + \frac{1}{2}\left(\mathcal{P}(\Proj_p^{2p}(f_2)) - \sigma_2^{-1}  \mathcal{P}(\Proj_p^{N}(f_1)) \right) \\
		+\frac{1}{4}\left( -\sigma_2^{-1}\mathcal{P}(\Proj_p^{2p}(f_2))+  \mathcal{P}(f_4)   \right) \big) = 0.  
	\end{flalign*}
By Proposition \ref{PROP:IMPRIMITIVE-INDUCTION}, we have $ (1-\tau_p)\mathcal{P}(\Proj_p^N(f_1)) = (1-\tau_p) \mathcal{P}(g) $ for some appropriate $ g \in \mathbf{F_D}(p) $ taking values in $ \{\pm 1\} $. Therefore, by \eqref{EQN:TH4P:REPEAT-OBS-PARITY} the element {$(1-\tau_p)\mathcal{P}(\Proj_p^N(f_1)) \in 2\Z[\tau_p]$} and thus 
\[ (1-\tau_p)\mathcal{P}(\Proj_p^N(f_1)) + 2^{-1}(1-\tau_p)\left(\mathcal{P}(\Proj_p^{2p}(f_2)) - \sigma_2^{-1}  \mathcal{P}(\Proj_p^{N}(f_1)) \right) \big) \in \mathbb{Z}[\tau_p]. \] Therefore the element 
	\[ \frac{1}{4} (1-\tau_p) \left(\mathcal{P}(f_4) - \sigma_2^{-1} \mathcal{P}(\Proj_p^{2p}(f_2)) \right) \in   \mathbb{Z}[\tau_p].\]
	We write $  \mathcal{P}(f_4) - \sigma_2^{-1} \mathcal{P}(\Proj_p^{2p}(f_2)):=2\sum_{i=1}^{p-1}a_i \tau_p^i := \mathcal{P}(g) $. Note that $ a_i \in \{0, \pm 1\}$, as the functions $ f_4,f_2 $ take values in $ \{ \pm 1 \} $. Thus $ (1-\tau_p) \mathcal{P}(g) $ takes values in $ \{0 ,\pm 1, \pm 2 \} $. To ensure that $ (1-\tau_p) \mathcal{P}(g) $ is even valued, we should have $ a_i = a_{i-1} \bmod{~} 2 $. However, this means that $ a_i = 0 $ for all $ i $ or $ a_i \in \{ \pm 1\} $ for all i. If $ a_i = 0 $ for some $ i$ (hence for all $ i  $) then $ f_4 = \sigma_2^{-1}\Proj_{p}^{2p}(f_2) $. If $ a_i \neq 0 $ for all $ i $, then we note that $ f_4(i) \neq \sigma_2^{-1}\Proj_{p}^{2p}(i) $ for all $ i $ co-prime to $ p $ and in this case, we have $ f_4 = -\sigma_2^{-1} \Proj_{p}^{2p}(f_2) $. In particular $ M_4 = \pm M_2$ by  \eqref{EQN:PROJ-NDF-DESCRIPTION}.

    \textbf{Conditions on restricted sums : } We have the following equations consisting of $ M_i $'s arising from $(V_2^{pr})$ and \eqref{EQN:CRUCIAL-LOGP-CONTRIBUTION-SQFREE-DIV} in Remark \ref{COR:LOGP0-CONSEQUENCE} as $v_p(N) = 1$ in this case.  
	\begin{align}
		-M_1 + 2M_4 -M_p + 2M_{4p} &= 0 \label{EQN:TH-4P:LOG2VAL}.\\
		M_p + M_{2p} + M_{4p} &= 0 \label{EQN:TH-4P:SPLITSYSTEMFROM1SYSTEM}. \\
		M_1 + M_2 + M_4 &=0 \label{EQN:TH-4P:124}.
	\end{align}	
	
	From \eqref{EQN:TH-4P:SPLITSYSTEMFROM1SYSTEM} we also note that $ M_{2p} = M_{4p} $ as $M_p = \pm 2$ (note that $ f_p = \pm \chi_{0,4} $ by \eqref{EQN:TH4P:REPEAT-OBS-PRIMITIVE}).  Inserting this in \eqref{EQN:TH-4P:LOG2VAL}, we obtain 
	\begin{equation}\label{EQN:TH-4P:NEWLOG2VAL}
		M_1-2M_4 = -2M_p.
	\end{equation}
	\textbf{Analysis by possibilities on $f_4$ : } 
	\begin{itemize}
	    \item $\mathbf{f_4 = \sigma_2^{-1} \Proj_{p}^{2p}(f_2)}$ : In this case, we have ${M_2 = +M_4}$ and we solve equations \eqref{EQN:TH-4P:124},\eqref{EQN:TH-4P:NEWLOG2VAL}. We find that $|M_4| = 1 $. However this is a contradiction as $f_4 \in \mathbf{F_D}(p)$ takes values in $\{ \pm 1\}$ and consequently $M_4 \equiv 0\bmod{~}2$.
	    \item $\mathbf{f_4 = - \sigma_2^{-1} \Proj_{p}^{2p}(f_2)}$ : In this case, we have ${M_2 = -M_4}$ and substituting this in equation \eqref{EQN:TH-4P:124} we conclude that $M_1 = 0$. Recalling that $f_1$ is imprimitive $\bmod{~}{4p}$ and $\Proj_4^N(f_1)$ is imprimitve $\bmod{~} 4$, by Remark \ref{REM:PROJ} we have 
	    \[ f_1 = \sum_{\substack{\chi \bmod{}{N}\\\text{ cond}(\chi) = p}} \langle f_1,\chi \rangle \chi. \]
	    In particular, $\Proj_p^N(f_1)$ takes values in $\{ \pm 1\}$ at co-prime residue classes $\bmod{~} p$ and $\sum_{a=1}^p\Proj_p^N(f_1)(a) = 0$. Now, on re-writing $(V_4)$ assuming these conditions, by \eqref{EQN:TH4P:REPEAT-OBS-PRIMITIVE} we have : 
	    \[ (1 -\frac{\sigma_2^{-1}}{2}) \Proj_p^N(f_1)(j) + \frac{1}{2}(1-\sigma_2^{-1}) \Proj_p^{2p}(f_2)(j) = c \text{ for all } (j,p) = 1. \]
	    This forces the value of $c$ to be a half integer. On summing over all the values $j$ co-prime to $p$, we have  
	    \[ (p-1)c = \frac{1}{2} \sum_{j=1}^{p-1} \Proj_p^N(f_1)(j) = 0. \]
        This implies $c = 0$, a contradiction to the fact that it is a half integer. 
	\end{itemize}
\end{proof}
\subsection{A counter-example by Sr. Tengely}
It was mentioned in \cite[Pg. 382, Theorem 1]{TIJDEMAN} that using an exhaustive search, Tengely explicitly constructed an arithmetic function $f$ with values in $\{ \pm 1 \}$ and of period $36$ such that $L(1,f) = 0$. A proof using digamma relations was given in \cite{PILERUD}. We provide a simpler explanation using the decomposition of $f$ as Dirichlet type functions. This is in the spirit of Chowla's question \cite{CHOWLA}. We start by recalling the function $f$ : \\ 
\begin{center}
\begin{tabular}{|c|c|c|c|c|c|c|c|c|c|c|c|c|c|c|c|c|c|c|c|c|c|c|c|c|c|c|c||c|c|c|c|c|c|c|c|}
     \hline
     n&1&2&3&4&5&6&7&8&9&10&11&12&13&14&15&16&17&18 \\
     \hline
     f(n)&1&-1&-1&-1&-1&1&1&1&-1&1&-1&-1&1&-1&1&-1&-1&1\\
     \hline
     n&19&20&21&22&23&24&25&26&27&28&29&30&31&32&33&34&35&36\\
     \hline
     f(n)&1&1&-1&1&-1&-1&1&-1&-1&-1&-1&1&1&1&1&1&-1&1\\ 
     \hline
\end{tabular}
\end{center}
\par
If we set $\Psi_3$ as the non-trivial character $\bmod{~} 3$, $\Psi_6$ as the non-trivial character $\bmod~{6}$ and $\chi_{0,N}$ as the trivial character $\bmod N$, then from the above values, we observe that 
\[ f_1 = \Psi_6, \quad f_2 = -\Psi_6, \quad f_3 = -\Psi_6, \quad f_4 = -\Psi_3, \quad \]
\[f_6 = \chi_{0,6}, \quad f_9 = -\chi_{0,4}, \quad f_{12} = -\chi_{0,3}, \quad f_{18} = \chi_{0,2}, \quad f_{36} = 1. \]
Now we have 
\[ L(s,f) = \sum_{d \mid 36} \frac{1}{d^s}L(s,f_d) = \sum_{d \in S_1} \frac{1}{d^s}L(s,\chi_{0,d}) + \sum_{d \in S_2} \frac{1}{d^s}L(s,f_d).  \]
Here $S_1 = \{6,9,12,18,36 \} \text{ and } S_2 = \{ 1,2,3,4 \}$. So it remains to understand the relationship between the Dirichlet series $L(s,\chi_{0,N}), L(s,\Psi_6)$ and $L(s,\Psi_3)$. We note that 
\[ L(s,\chi_{0,N}) = \prod_{p \mid N} \big(1-\frac{1}{p^s} \big)\zeta(s), \qquad L(s,\Psi_6) = \big(1-\frac{\Psi_3(2)}{2^s} \big)L(s,\Psi_3) = \big(1+\frac{1}{2^s} \big)L(s,\Psi_3). \]
The second equality holds true as $\Psi_6$ is induced from $\Psi_3$ and therefore the Dirichlet series associated to these characters only differ by an Euler factor. Substituting the above expressions and on expanding the Dirichlet polynomials we get 
\[ L(s,f) = \big(\frac{1}{6^s} - \frac{1}{9^s} - \frac{2}{12^s} + \frac{1}{18^s} + \frac{2}{36^s} \big)\zeta(s) + \big(1 - \frac{1}{3^s} - \frac{2}{4^s} - \frac{1}{6^s} \big)L(s,\Psi_3). \]
To observe that $L(1,f) = 0$, it suffices to analyse the nature of zeroes of the Dirichlet polynomials at $s=1$. Owing to the pole of $\zeta(s)$ at $s=1$, we have to check that the Dirichlet polynomial $\big(\frac{1}{6^s} - \frac{1}{9^s} - \frac{2}{12^s} + \frac{1}{18^s} + \frac{2}{36^s} \big)$ has a zero of order two at $s=1$. We observe that\footnote{It is enough to see the Dirichlet polynomial and its derivative vanishes at $s=1$ and this can be easily verified. We write it in this manner as these expressions might be useful if we want to determine higher order zeros; computing double derivatives won't help us as it is a non-linear polynomial expression in logarithms of prime numbers}
\[ \big(\frac{1}{6^s} - \frac{1}{9^s} - \frac{2}{12^s} + \frac{1}{18^s} + \frac{2}{36^s} \big) = \frac{1}{6^s}\big(1-\frac{2}{2^s} \big)\big(1-\frac{3}{3^s} \big) - \frac{1}{9^s} \big(1-\frac{2}{2^s}\big)^2, \]
thereby proving that the Dirichlet polynomial has a zero of order two at $s=1$. On the other hand $L(s,\Psi_3)$ is an entire function and $\big(1 - \frac{1}{3^s} - \frac{2}{4^s} - \frac{1}{6^s} \big)$ vanishes at $s=1$. Therefore $L(1,f) = 0$. 
\section{A symmetry involving $\Proj$ operators and vanishing of $L(1,f)$}\label{SEC:ERD-INDUCTION}
    Let $ f $ be periodic of period $ N $ and $ g $ be periodic of period $dN$. We say that $ f $ is a component of $ g $ if $ g(dn)=f(n) $ for all natural numbers $ n $. We also say that the Erd\"{o}s conjecture is true for a function $ f $ by \eqref{EQN:COND-DIVISORS:INTERMS-GROUPRING} if one of the conditions in \eqref{EQN:COND-DIVISORS:INTERMS-GROUPRING} is not satisfied. The proof of Theorem \ref{TH:ERD-INDUCTION} has two components, namely the choice of prime and we also require a `descent' process. The key observation is that if for two integer valued functions $ F,G $ bounded by an absolute constant $ C $ independent of $ p $, if we have 
	\begin{equation}\label{EQN:IND-PROCESS}
	   (p - \sigma_p) \mathcal{P}(F) + \mathcal{P}(G) = 0, 
	\end{equation}
	then for $ p $ large enough compared to $ C $, the functions $ F $ and $ G $ has to be identically zero. In this proof, we assume that $L(1,g)= 0 $ and therefore the components of $g$ satisfy $(V_D)$ for all divisors $D$ of $pN$. Splitting $g_d$ accordingly and choosing $D$ co-prime to $p$, we get functions $F_D$ and $G_D$ such that they satisfy \eqref{EQN:IND-PROCESS}. $\mathcal{P}(F_D)$ and $\mathcal{P}(G_D)$ will be certain linear combinations of elements in $ \mathbb{Z}[\Gal{\mathbb{Q}(\zeta_d) / \mathbb{Q}}]$, and $G_D$ will consist only of elements $\Proj_{N/D}^{N/d}(f_d)$ and some other factor. We conclude that $G_{D}$ is imprimitive $ \bmod{~} N/D $ for $ p \nmid N/D $ owing to the above observation. Our final choice of $ C $ will depend on $ N $ and this is because for every $D$ dividing $N$, we are finding $C_D$.  

\par
We also mention that the function $ \Proj_{d} $ is well behaved under Galois action i.e. if $ {\sigma \in \Gal{\mathbb{Q}(\zeta_N) / \mathbb{Q}} }$ and $ f \in \mathbf{F_D}(N) $, then $ {\ Proj_{D}^N(\sigma f) = \sigma\vert_{\mathbb{Q}(\zeta_D)} \Proj_{D}^N(f)}$. This can be seen by Definition \ref{DEFN:INTRO-PROJ}.

To avoid cluttering of notations, we use the following hyphenated product : 
\[\sideset{}{'}\prod_{q\mid N/d} (1 - \frac{\sigma_q}{q}) = \prod_{\substack{q \mid N/d\\q \nmid N/D}}(1 - \frac{\sigma_q}{q}). \]
Here $D$ is chosen `appropriately', and the elements $(1-q^{-1}\sigma_q)$ belong to $\mathbb{Q} [\Gal{\mathbb{Q}(\zeta_{N/D})/\mathbb{Q}}]$.   
\begin{proof}[Proof of Theorem \ref{TH:ERD-INDUCTION}]
	Let $ g $ be an integer valued function of period $ pN $ satisfying $ L(1,g) = 0 $ and let $ f $ be a component of $ g $ of period $ N $. By Theorem \ref{TH:NEW-NECESSARY-SUFFCIENT-VANISHING-L1F}, we note that $ g $ satisfies \eqref{EQN:COND-DIVISORS:INTERMS-GROUPRING} for every divisor $ D $ of $ N $ and \eqref{EQN:COND-PRIMES} for every prime $ p $ dividing $ N $.

	\textbf{Imprimitivity and Integrality} : Consider $ (V_D) $ for $ p \nmid D $. We have the following : 
	\[ \prod_{q \mid pN/D}(1-\tau_q) \big(\sum_{d \mid D}\frac{1}{d} \prod_{\substack{ q \mid pN/d\\q \nmid pN/D}} (1 -\frac{\sigma_q^{-1}}{q}) \mathcal{P}(\Proj_{pN/D}^{pN/d}(g_d))\big) = 0.  \]
	We define function $h_D$ as follows : 
	$$ h_D := \varphi(D)D  \sum\limits_{d \mid D} \frac{1}{d} \sideset{}{'}\prod\limits_{q \mid pN/d} (1 -\frac{\sigma_q^{-1}}{q}) (\Proj_{pN/D}^{pN/d}(g_d))\big).$$ 
	The function $h_D$ is integer valued and by Proposition \ref{PROP:POLY-NPRIM-ANNHILATE}, $h_D$ is imprimitive $\bmod{~} pN/D$.

	Therefore by Proposition \ref{PROP:IMPRIMITIVE-INDUCTION} we have,
	\begin{equation}\label{EQN:TH-INDN-PROP-IMPACT-hD}
	   \prod_{\substack{q \mid pN/D\\q \neq p}} (1-\tau_q) \mathcal{P}(\Proj_{N/D}^{pN/D}h_D) \in \mathbb{Z}[ \Gal{\mathbb{Q}(\zeta_{N/D}) / \mathbb{Q} }]. 
	\end{equation}
	Moreover, note that we have 
	\begin{equation}\label{EQN:TH-INDN-PROP-IMPACT-BOUND-ON-hD}
	    \prod_{\substack{q \mid pN/D\\q \neq p}} (1-\tau_q) \mathcal{P}(\Proj_{N/D}^{pN/D}h_D) = \prod_{\substack{q \mid pN/D\\q \neq p}} (1-\tau_q) \mathcal{P}(H_D),
	\end{equation}
	where the arithmetic function $H_D(n)$ is given by $H_D(n) = h_D(\widetilde{n})$, where $\widetilde{n}$ is a representative of a residue $\bmod{~}pN/D$ satisfying $\widetilde{n}\equiv 1 \bmod{~}p$ and $\widetilde{n} \equiv n \bmod{~} N/D$.

	\textbf{Arriving at \eqref{EQN:IND-PROCESS} for divisors $D$ of $N$:} Now, we consider $ (V_{pD}) $ for the same divisor $ D $ of $ N $.
	\begin{flalign*}
		\prod_{q \mid N/D}(1-\tau_q) \big(\sum_{d \mid pD}\frac{1}{d} \sideset{}{'}\prod_{q \mid N/d} (1 -\frac{\sigma_q^{-1}}{q}) \mathcal{P}(\Proj_{N/D}^{N/d}(g_d))\big) &= 0. \\
		\prod_{q \mid N/D}(1-\tau_q) \left(1-\frac{\sigma_p^{-1}}{p} \right)  \big(\sum_{d \mid D}\frac{1}{d} \sideset{}{'}\prod_{q \mid N/d} (1 -\frac{\sigma_q^{-1}}{q}) \mathcal{P}(\Proj_{N/D}^{pN/d})(g_d)\big) &~~ + \\
		\frac{1}{p} \big(\prod_{q \mid N/D}(1-\tau_q) \big(\sum_{d \mid D}\frac{1}{d} \sideset{}{'}\prod_{q \mid N/d} (1 -\frac{\sigma_q^{-1}}{q}) \mathcal{P}(\Proj_{N/D}^{N/d}(f_d))\big) &= 0.  
	\end{flalign*}
	Multiplying the above equation by $ pD\varphi(D) $, we find that the denominators in the above equation are cancelled and writing $ \Proj_{N/D}^{pN/d} = \Proj_{ N/D}^{pN/D} \circ \Proj_{pN/D}^{pN/d} $ for $ p \nmid d $, we obtain :
	\begin{flalign*}
	    \prod_{q \mid N/D}(1-\tau_q) \big(\left(p-{\sigma_p^{-1}} \right)  \mathcal{P}(\Proj_{N/D}^{pN/D}(h_D))\big) + \\
	    D\varphi(D) \big(\prod_{q \mid N/D}(1-\tau_q) \big(\sum_{d \mid D}\frac{1}{d} \sideset{}{'}\prod_{q \mid N/d} (\frac{q-\sigma_q^{-1}}{q}) \mathcal{P}(\Proj_{N/D}^{N/d}(f_d))\big)\big) = 0 .
	\end{flalign*}
	Note that the following elements are in $ \mathbb{Z} [ \Gal{ \mathbb{Q}(\zeta_{N / D })/ \mathbb{Q}}] $ : 
	\begin{itemize}
	    \item $\mathcal{P}(F_D): = \prod\limits_{q \mid N/D}(1-\tau_q) \mathcal{P}(\Proj_{N/D}^{pN/D}(h_D)) $ (By  \eqref{EQN:TH-INDN-PROP-IMPACT-hD}).
	    \item $\mathcal{P}(G_D):= D\varphi(D) \big(\prod\limits_{q \mid N/D}(1-\tau_q) \big(\sum\limits_{d \mid D}\frac{1}{d} \sideset{}{'}\prod\limits_{\substack{ q \mid N/d}} (\frac{q-\sigma_q^{-1}}{q}) \mathcal{P}(\Proj_{N/D}^{N/d}(f_d))\big)\big)$.
	\end{itemize}
	\textbf{Upper Bounds : } We now compute the upper bound of the values of the functions $ F_D $ and $ G_D $ respectively. By $\max{f}$, we denote the maximum value of the Dirichlet type function $f$. We require the following classical bounds :
	\[ q+1 \le 2(q-1), \qquad \max{\Proj^{N/d}_{N/D}(g_d)} \le 1. \]
	
	For the second inequality above, we use Proposition \ref{PROP:POLY-NPRIM-ANNHILATE}. This along with estimates \[\displaystyle{|\max((1+\sigma)(r))| \le 2 |\max(r)|}\] 
	applied to \eqref{EQN:TH-INDN-PROP-IMPACT-BOUND-ON-hD} (note that $F_D = \prod_{q \mid N/D}(1-\tau_q) H_D$) will give
	\begin{flalign*}
	|\max{F_D}| \le 2^{\omega(N/D)} |\max{h_D}| &\le 2^{\omega(N/D)} \varphi(D) \sum_{d \mid D} \prod\limits_{\substack{q \mid d\\q \text{ prime }}} (q+1). \\&\le 2^{\omega(N/D)} \varphi(D) \sum_{d \mid D} 2^{\omega(d)} \varphi(d) \le 2^{\omega(N)} \varphi(D) \prod\limits_{\substack{q \mid D\\q \text{ prime }}}(q^2-2q+2).
	\end{flalign*}
	The same bound also holds for $G_D$. Now we set $p$ to be a prime divisor greater than {$2^{\omega(N)}\varphi(N)\prod\limits_{\substack{q \mid N\\q \text{ prime }}}(q^2-2q+2)$.} 
	we have $F_D = G_D = 0$ for all divisors $D \neq 1$ dividing $N$. 
	
	Since $G_D = 0$ we have 
	\[\big(\prod_{q \mid N/D}(1-\tau_q) \big(\sum_{d \mid D}\frac{1}{d} \sideset{}{'}\prod_{q \mid N/d} (\frac{q-\sigma_q^{-1}}{q}) \Proj_{N/D}^{N/d}(f_d)\big)\big)  = 0.\]
	Therefore the arithmetic function $f$ satisfies $(V_D)$ for all divisors $D$ of $N$ except for $D=N$. $f$ also satisfies $(V_N)$ as $\sum_{a=1}^Nf(a) = 0$ (from the assumption on the function $f$). Therefore if $L(1,g) = 0$ for a function $g$ of period $pN$ with $p$ greater than $2^{\omega(N)}\varphi(N)\prod\limits_{\substack{q \mid N\\q \text{ prime }}}(q^2-2q+2)$, then $f$ doesn't satisfy Property \ref{PROPERTY-U} contrary to our assumption on $f$. This proves the theorem. 
\end{proof}
\begin{proof}[Proof of Corollary \ref{COR:NEW-FAMILY-ERDOS}]
    Let $f$ be an Erd\"{o}s function of period $N$. Let $h(n):=f(pn)$ for all numbers $n$ and let $M = \sum_{a=1}^{N/p} h(n)$. If $M$ is non-zero, then by Corollary \ref{COR:LOGP0-CONSEQUENCE} $L(1,f)$ is non-zero. It remains to check what happens when $M = 0$. Note that $h$ is an Erd\"{o}s of period $N$. By following the proof in \cite[Theorem 7]{RAM-SARADHA}, we observe that $h$ satisfies Property \ref{PROPERTY-U} as the coefficient of $\pi$ in $L(1,h)$ is non-zero. Now, by Theorem \ref{TH:ERD-INDUCTION}, $L(1,f)$ cannot be zero.
\end{proof}

\section{Vanishing at $k>1$}\label{SEC:CHOWLA-MILNOR}
In this section, we discuss the vanishing of Dirichlet series $L(s,f)$ for periodic functions $f$ at positive integers greater than one. We begin by noting that Conjecture \ref{CONJ:CHOW-MIL} is the same as saying that the map $\textbf{Ev}_k : \mathbf{F_D}(N) \to \C$ assigning $f$ to $L(k,f)$ is injective for $k>1$.  This conjecture is of the same nature of \eqref{EQN:KEY-POINT}, except that we are evaluating at $s=k$. Therefore the proofs of Lemma \ref{LEM:PRIMPART-ALMOST-SAME-PROJPART}, Theorem \ref{TH:NEW-NECESSARY-SUFFCIENT-VANISHING-L1F} will almost follow through with some minor modifications. We list the modifications below : 
\begin{enumerate}[label=6.\arabic*.]

    \item The proof of Lemma \ref{LEM:PRIMPART-ALMOST-SAME-PROJPART} will follow verbatim if we replace $\beta$ with 
    \[\beta_k:= \prod_{\substack{p \mid N\\p \nmid d}} \big(1 - \frac{\sigma_p}{p^k} \big). \]
    \item \label{MODIFICATION:PRIM-SEP-ATK} The proof of Corollary \ref{COR:SAME-DPRIM-VAL} will follow verbatim when we replace $W_d^{pr}(N)$ with $W_d^{pr,k}(N)$ given by 
    \[ W_d^{pr,k}(N): = \{ L(k,f) ~|~f \in \mathbf{F_D^{pr}}(N;d) \}. \]
    \item The analogy for Theorem \ref{TH:NEW-NECESSARY-SUFFCIENT-VANISHING-L1F} for $k>1$ can be treated simply by computing $S_d$ for divisors $d$ of $N$. At appropriate places, we have to replace $d$ by $d^k$. For instance, we have to replace $S_d$ by $S_d^{(k)}$ given by the following : 
    \[S_d^{(k)}= \frac{1}{d^k}\sum_{\chi \bmod{} N/d} \langle f,\chi \rangle L(k,\chi). \]
\end{enumerate}
After making these modifications, we obtain the following theorem assuming Conjecture \ref{CONJ:CHOW-MIL}: 
\begin{Th}\label{TH:NEW-NECESSARY-SUFFCIENT-VANISHING-LkF}
Let $f$ be a rational valued arithmetic function of period $N$. Then $L(k,f) = 0 $ if and only if the following conditions are satisfied : 
\newline
For each divisor $D \neq 1$ of $N$, the arithmetic function :
\begin{equation}\label{EQN:INIT-COND-DIVISORS-LKF}
	\sum_{d \mid D} \frac{1}{d^k} \prod_{\substack{p \mid N/d\\p \nmid N/D}}(1-\frac{\Frob{p}^{-1}}{p^k}) \Proj_{N/D}^{N/d}(f_{d}) \text{ is } \text{imprimitive} \bmod{~} N/D. 
\end{equation}
\end{Th}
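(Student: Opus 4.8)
The plan is to mirror the proof of Theorem \ref{TH:NEW-NECESSARY-SUFFCIENT-VANISHING-L1F} given in Section \ref{SEC:PROJD-TH-VANISHING}, tracking the modifications listed in items 6.1--6.3 and observing which simplifications occur when $k > 1$. The fundamental structural input is that Conjecture \ref{CONJ:CHOW-MIL} is equivalent to the injectivity of the evaluation map $\textbf{Ev}_k : \mathbf{F_D}(N) \to \C$, exactly paralleling the role played by \eqref{EQN:KEY-POINT} at $s=1$. The crucial difference from the $k=1$ case is that $L(s,f)$ is \emph{entire} when $k > 1$: there is no pole at $s=1$, hence no residue contribution. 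This means the entire analysis surrounding the terms $R_d$ and the prime-condition \eqref{EQN:COND-PRIMES} simply disappears, which is precisely why Theorem \ref{TH:NEW-NECESSARY-SUFFCIENT-VANISHING-LkF} carries \emph{only} the divisor conditions \eqref{EQN:INIT-COND-DIVISORS-LKF} and no analogue of $(V_p^{\mathrm{pr}})$.

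First I would write $L(k,f) = \sum_{e \mid N} e^{-k} L(k, f_e)$, decompose each $f_e$ into its character components, and observe that since $L(k,\chi)$ is a finite value for every character (including the principal one, as there is no pole for $k>1$), the expression $L(k,f)$ is a direct sum of contributions indexed by conductor. Concretely, I would define $S_d^{(k)} = d^{-k}\sum_{\chi \bmod N/d} \langle f_d, \chi\rangle L(k,\chi)$ as in modification 6.3, with no separate residue term to peel off. The analogue of Lemma \ref{LEM:PRIMPART-ALMOST-SAME-PROJPART}, with $\beta$ replaced by $\beta_k = \prod_{p \mid N, p \nmid d}(1 - \sigma_p p^{-k})$ (modification 6.1), rewrites $L(k, \mathcal{D}_N(d,g))$ in terms of $L(k, \mathcal{D}_d(d, \beta_k \Proj_d^N(g)))$; its proof is verbatim since the only $s$-dependence enters through the Euler factor $\prod_{p \mid N}(1 - \chi(p) p^{-k})$ relating a primitive $L$-value to its imprimitive lift. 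Corollary \ref{COR:SAME-DPRIM-VAL} then goes through with $W_d^{pr,k}$ in place of $W_d^{pr}$ (modification 6.2), establishing that $\sum_{d \mid N} W_d^{pr,k}(d)$ is a direct sum.

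Next I would interchange summations to regroup the total as $\sum_{e \mid N} \sum_{d \mid N/e} d^{-k} L(k, \mathcal{D}_e(e, \beta_{k,N/d,e} \Proj_e^{N/d}(f_d)))$, exactly analogous to \eqref{EQN:REARRANGED-PRIM-PART} but now with every divisor $e$ of $N$ included (the restriction to $e \neq 1$ at $k=1$ came only from isolating the residue, which is absent here). Setting $\alpha_e^{(k)}$ equal to the inner sum, which lies in $W_e^{pr,k}(e)$, the vanishing $L(k,f) = 0$ combined with the directness of the decomposition forces each $\alpha_e^{(k)} = 0$. Invoking the injectivity of $\textbf{Ev}_k$ (this replaces the appeal to Theorem \ref{TH:BBW} and the ``Multiple Baker-Birch Application'' step, and is exactly where Conjecture \ref{CONJ:CHOW-MIL} is used) yields $\mathcal{D}_e(e, \sum_{d \mid N/e} d^{-k} \beta_{k,N/d,e} \Proj_e^{N/d}(f_d)) = 0$, which by Remark \ref{REM:DEF:IMPRIM:REINTEPRET} is precisely the imprimitivity statement \eqref{EQN:INIT-COND-DIVISORS-LKF} for the divisor $D = N/e$. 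Running the argument in reverse gives the converse.

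The main obstacle, and the only genuinely substantive point requiring care, is verifying that $\textbf{Ev}_k$ is injective \emph{conditionally on} Conjecture \ref{CONJ:CHOW-MIL}: one must check that linear independence of the Hurwitz values $\zeta(k, a/q)$ over coprime residues $a$ translates into injectivity of $f \mapsto L(k,f)$ on all of $\mathbf{F_D}(N)$ (not merely the codimension-one subspace $\mathbf{F_D}(N)^0$ relevant at $s=1$), and in particular that the principal-character contribution $\zeta(k)$ is accounted for. I would handle this by expressing $L(k,f) = \sum_{a=1}^N f(a) N^{-k} \zeta(k, a/N)$ and noting that Conjecture \ref{CONJ:CHOW-MIL} at each level $d \mid N$, together with the distribution (Kubert) relations among Hurwitz zeta values that reduce $\zeta(k, a/N)$ for non-coprime $a$ to lower levels, supplies the full $\mathbb{Q}$-linear independence needed. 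Beyond this, the proof is a faithful transcription of Section \ref{SEC:PROJD-TH-VANISHING} with the residue bookkeeping deleted, so I do not anticipate further difficulty.
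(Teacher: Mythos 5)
Your proposal is correct and follows essentially the same route as the paper, which itself only records the modifications 6.1--6.3 to the proof of Theorem \ref{TH:NEW-NECESSARY-SUFFCIENT-VANISHING-L1F}: replace $\beta$ by $\beta_k$, replace $W_d^{pr}$ by $W_d^{pr,k}$, drop the residue terms $R_d$ (hence the absence of any analogue of $(V_p^{\mathrm{pr}})$), and substitute the injectivity of $\textbf{Ev}_k$ (Conjecture \ref{CONJ:CHOW-MIL}) for the Baker--Birch--Wirsing input. Your extra care about the principal-character component is warranted, though since $\textbf{Ev}_k$ is only needed on Dirichlet type functions of period $N$ the required independence is literally Conjecture \ref{CONJ:CHOW-MIL} at level $q=N$, with no appeal to distribution relations.
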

We recall an important computation done recently in \cite[Lemma 3.2]{ABHISHEK-SIDDHI}\footnote{The notations used in this work are different as compared to \cite{ABHISHEK-SIDDHI}}. Let $f$ be an arithmetic function of period $N$ such that $L(k,f) = 0$. For a divisor $d$ of $N$, we define a transform $T_d^{(k)}$ from a vector space of functions of period $N$ to a vector space of functions of period $N/d$. If $f$ is an arithmetic function of period $N$, then $T_d^{(k)}(f)$ is defined as : 
\[T_d^{(k)}(f)(n) = \sum_{m \in \mathcal{M}(N)}\frac{f(dmn)}{m^k}. \] 
\begin{Lemma}\label{LEM:EXPLICIT-COMPUTATION-LKF-SPLIT}
Let $N$ be squarefree and $\widetilde{f_d^{(k)}} := T_d^{(k)}(f)\chi_{0,N/d} \in F_D(N/d)$ where $\chi_{0,N/d}$ denotes the principal character mod $N/d$. Then
\[ L(s,f) = \sum_{d \mid N} \mu(d) \prod_{p \mid d} \left(1-\frac{p^k}{p^s} \right) \, L(s,\widetilde{f_d^{(k)}}),  \]
where $d$ varies over the divisors of $N$ excluding $1$.
\end{Lemma}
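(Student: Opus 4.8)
The plan is to establish the formula of Lemma~\ref{LEM:EXPLICIT-COMPUTATION-LKF-SPLIT} first as an identity of Dirichlet series in the region of absolute convergence, and to invoke the standing hypothesis $L(k,f)=0$ only at the very end. Since $N$ is squarefree, every $n\in\N$ factors uniquely as $n=mr$ with $m\in\mathcal{M}(N)$, the multiplicative monoid of integers all of whose prime factors divide $N$, and $(r,N)=1$; moreover $\mathcal{M}(N)\cong\prod_{p\mid N}\langle p\rangle$. First I would rewrite $L(s,f)=\sum_{(r,N)=1}r^{-s}\sum_{m\in\mathcal{M}(N)}f(mr)\,m^{-s}$, so that the real content of the lemma is a recipe for converting the weight $m^{-s}$ on the $N$-smooth part into the weight $m^{-k}$ built into the transforms $T_d^{(k)}$, the price being the Euler factors $\prod_{p\mid d}(1-p^k/p^s)$.

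The engine of the proof is a one-prime telescoping identity. Fixing $p\mid N$ and isolating the $p$-power part of $n$ for a fixed cofactor, the factor $(1-p^k/p^s)$ applied to a truncated geometric sum collapses to its two boundary terms; concretely the computation rests on
\[ \big(1-\tfrac{p^k}{p^s}\big)\sum_{b=0}^{a-1}p^{-bs-(a-1-b)k}=p^{-(a-1)k}-p^{-as+k}, \]
and I would check, coefficient by coefficient in the values of $f$, that after combining the contributions indexed by $d$ and by $pd$ these boundary terms reassemble exactly the weight $p^{-as}$. Because $N$ is squarefree the smooth monoid $\mathcal{M}(N)$ factors over the primes dividing $N$, so the global statement is the product of these local identities; expanding that product is precisely the divisor sum $\sum_{d\mid N}\mu(d)\prod_{p\mid d}(1-p^k/p^s)$, in which $T_d^{(k)}$ carries the smooth contribution of the complementary primes and the factor $\chi_{0,N/d}$ records that $\widetilde{f_d^{(k)}}$ lives in $\mathbf{F_D}(N/d)$. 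Thus the multiplicative structure reduces everything to the single-prime telescoping together with an inclusion--exclusion over the prime divisors of $N$.

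The last step, and the only place the hypothesis enters, is the removal of the $d=1$ term. I would isolate it as $L(s,\widetilde{f_1^{(k)}})$ and show that evaluating the defining transform at $s=k$ reproduces $L(k,f)$; the injectivity of the evaluation map $\mathbf{Ev}_k\colon\mathbf{F_D}(N)\to\C$, $f\mapsto L(k,f)$, which is the content of Conjecture~\ref{CONJ:CHOW-MIL}, then forces the relevant component to vanish and lets the sum begin at $d\neq 1$. I expect this to be the main obstacle: the intermediate function $\widetilde{f_1^{(k)}}$ is not visibly rational-valued, so one must phrase the vanishing at the level of the basis $\{\zeta(k,a/N):(a,N)=1\}$ rather than applying the injectivity on $\mathbf{F_D}(N)$ naively. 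The remaining bookkeeping---absolute convergence for $\Re s$ large and the interchange of the sums over $r$, $m$ and $d$ needed to justify the telescoping---is routine but should be recorded.
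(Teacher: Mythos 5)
First, a caveat about the comparison itself: the paper does not prove Lemma \ref{LEM:EXPLICIT-COMPUTATION-LKF-SPLIT} at all; it is recalled from \cite[Lemma 3.2]{ABHISHEK-SIDDHI}, so there is no in-paper argument to measure yours against. Your overall strategy --- prove the identity with the $d=1$ term included, as an identity of absolutely convergent Dirichlet series via the factorization $n=mr$ with $m\in\mathcal{M}(N)$ and $(r,N)=1$, reduce to a single-prime telescoping by multiplicativity over $p\mid N$, and then delete the $d=1$ term by observing $L(k,\widetilde{f_1^{(k)}})=L(k,f)=0$ and invoking Conjecture \ref{CONJ:CHOW-MIL} --- is the natural one and is surely the route taken in the cited source.

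There is, however, a concrete gap at the step you defer: the coefficient check does not close with the normalization as stated. Take $N=p$ and compare the coefficient of $f(p^a r)r^{-s}$, $(r,p)=1$, $a\ge 1$, on both sides. The $d=1$ term contributes $p^{-ak}$, and the $d=p$ term contributes $-\left(1-p^{k-s}\right)\sum_{b=0}^{a-1}p^{-bs-(a-1-b)k}=-\left(p^{-(a-1)k}-p^{k-as}\right)$ by your own telescoping identity, for a total of $p^{-ak}-p^{-(a-1)k}+p^{k-as}$, which is not $p^{-as}$ (at $a=1$ it equals $p^{-k}-1+p^{k-s}$). The boundary terms reassemble into $p^{-as}$ only after the $d=p$ term carries an extra factor $p^{-k}$, since $p^{-ak}-p^{-k}\left(p^{-(a-1)k}-p^{k-as}\right)=p^{-as}$; so the correct local weight is $p^{-s}-p^{-k}$ and the correct global coefficient is $\mu(d)d^{-k}\prod_{p\mid d}\left(1-p^{k}p^{-s}\right)$ (equivalently, the transform must weight by $(dm)^{-k}$ rather than $m^{-k}$). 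This is almost certainly a transcription slip in the recalled statement --- the paper itself notes its notation differs from \cite{ABHISHEK-SIDDHI} --- but the identity as printed, with the definitions as printed, is false, and your proposal asserts the reassembly rather than exhibiting it, which is exactly where the discrepancy hides. A smaller point: your worry that $\widetilde{f_1^{(k)}}$ is not visibly rational-valued is unfounded. For squarefree $N$ the sequence $p^a\bmod N$ is periodic in $a\ge 1$, so each inner sum $\sum_{m\in\mathcal{M}(N)}f(mn)m^{-k}$ is a finite rational combination of geometric series with rational ratios, hence rational; thus $\widetilde{f_1^{(k)}}$ is a rational-valued element of $\mathbf{F_D}(N)$ and Conjecture \ref{CONJ:CHOW-MIL} applies to it directly. (Your proposed fallback of arguing at the level of the basis $\{\zeta(k,a/N)\}$ would not rescue the argument if the coefficients were genuinely irrational, since the conjecture only asserts $\mathbb{Q}$-linear independence.)
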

We now proceed with the proof of Theorem \ref{TH:CHARACTERISATION-2PRIMES}
\begin{proof}[Proof of Theorem \ref{TH:CHARACTERISATION-2PRIMES}]
We first construct a $\mathbb{Q}$-vector space\footnote{It is strictly not necessary to introduce this vector space but we do it to avoid some tedious computations towards the end.} : 
\[ V_{pq}^{k,l} := \mathbb{Q}\langle \big(1-\frac{p^k}{p^s}\big)\big(1-\frac{q^l}{q^s}\big)\zeta(s), \big(1-\frac{q^k}{q^s}\big)\big(1-\frac{p^l}{p^s}\big)\zeta(s) \rangle. \]
Note that this vector space is of dimension $2$, as the Dirichlet polynomials are linearly independent over $\mathbb{Q}$. We shall show that the set of arithmetic functions of period $q$ satisfying $L(k,f) = L(l,f) = 0$ is a $\mathbb{Q}$ vector space of dimension $2$ and with this its immediate that $L(s,f) \in V_{pq}^{k,l}$. 

For $N = pq$, we use Lemma \ref{LEM:EXPLICIT-COMPUTATION-LKF-SPLIT} assuming that $L(k,f) = 0$. We then have, 
\begin{equation}\label{EQN:DECOMPOSITION-LSERIES-2PRIMES-CHOWLA-MILNOR}
   L(s,f) = \big(1-\frac{p^k}{p^s}\big)L(s,\widetilde{f_p^{(k)}}) + \big(1-\frac{q^k}{q^s}\big)L(s,\widetilde{f_q^{(k)}}) + \big(1-\frac{pq^k}{pq^s}\big)L(s,\widetilde{f_{pq}^{(k)}}). 
\end{equation}
Since $L(l,f) = 0$, substituting $l$ in the above equation, we conclude that 
\begin{equation}\label{EQN:SPECIALISE-AT-DECOMPOSITION-2PRIMES}
  \big(1-p^{k-l}\big)L(l,\widetilde{f_p^{(k)}}) + \big(1-q^{k-l}\big)L(l,\widetilde{f_q^{(k)}}) + \big(1-pq^{k-l}\big)L(l,\widetilde{f_{pq}^{(k)}}) = 0.  
\end{equation}

By Conjecture \ref{CONJ:CHOW-MIL} and \ref{MODIFICATION:PRIM-SEP-ATK} we note that the $\mathbb{Q}$ vector spaces $W_d^{pr,l}(d)$ as $d$ runs over the divisors of $N$ are direct sums. Therefore ${\widetilde{f_p^{(k)}}}$ and ${\widetilde{f_p^{(k)}}}$ are imprimitive $\bmod{~} q$ and imprimitive $\bmod{~} p$ respectively. In particular $\widetilde{f_d^{(k)}}(m) = \widetilde{f_d^{(k)}}(n)$ where $(m,d) = (n,d) = 1$ and $d$ is either $p$ or $q$. We proceed to prove that this implies $f(dm) = f(dn)$ whenever $(m,d) = 1$.

Let $C_d = \max\{ |f(dm) - f(dn)| ~:~ (m,d)=1=(n,d) \}$. Let $a,b$ be co-prime to $d$ and such that $f(da) - f(db) = C_d$. We have 
\begin{flalign*}
    \sum_{m \in \mathcal{M}(pq)} \frac{f(dam)}{m^k} &= \sum_{m \in \mathcal{M}(pq)} \frac{f(dbm)}{m^k} \\
    f(da) - f(db) &= \sum_{\substack{m \in \mathcal{M}(pq)\\m \neq 1}} \frac{f(dbm)- f(dam)}{m^k} 
\end{flalign*}
Taking absolute value of both sides, we note that 
\[ C_d \le C_d ((1-\frac{1}{p^k})^{-1} (1 - \frac{1}{q^k})^{-1} - 1). \]
and this implies that 
\[ 0 \le 2(1-\frac{1}{p^k}) (1 - \frac{1}{q^k}) C_d \le C_d. \]
However note that 
\[ 2(1-\frac{1}{p^k}) (1 - \frac{1}{q^k}) \ge 2(1-\frac{1}{2^2}) (1 - \frac{1}{3^2}) > 1.\]
We therefore conclude that $C_d = 0$ and consequently $f(da) = f(db)$ whenever $(a,d) = (b,d) = 1$ and $d \in \{ p, q\}$. 

From the definition of $\widetilde{f_{N/d}^{(k)}}$ we have : 
\[ L(s,\widetilde{f_{N/d}^{(k)}}) = \sum_{\substack{n=1\\(n,d)=1}}^\infty \frac{\widetilde{f_{N/d}^{(k)}}(n)}{n^s} = \widetilde{f_p^{(k)}(1)} \big(1-\frac{d}{d^s} \big)\zeta(s) \text{ and } L(s,\widetilde{f_{pq}^{(k)}}) = \sum_{n=1}^\infty \frac{f_{pq}^{(k)}(n)}{n^s} = \widetilde{f_{pq}^{(k)}(1)}\zeta(s).\]
Combining the above observations, we have
\[ L(s,f) =\bigg(\widetilde{f_p^{(k)}(1)}\big(1-\frac{p^k}{p^s}\big)\big(1-\frac{1}{q^s}\big) + \widetilde{f_q^{(k)}(1)}\big(1-\frac{p^k}{p^s}\big)\big(1-\frac{1}{q^s}\big) + \widetilde{f_{pq}^{(k)}(1)}\big(1-\frac{pq^k}{pq^s}\big)\bigg) \zeta(s). \]
Since $L(l,f) = 0$, by \eqref{EQN:SPECIALISE-AT-DECOMPOSITION-2PRIMES}, this translates to a linear relation among the numbers $\widetilde{f_p^{(k)}}(1), \widetilde{f_q^{(k)}}(1)$ and $ \widetilde{f_{pq}^{(k)}}(1)$. and this in turn gives us a $\mathbb{Q}$ relation between $f(p), f(q), f(pq)$ as $f_p^{(k)}(1)$ and $f_q^{(k)}(1)$ are $\mathbb{Q}$ linear combinations of $f(p),f(q),f(pq)$. We therefore note that $L(s,f)$ belongs to a vector space of dimension $2$ if $L(k,f) = L(l,f) = 0$. This proves the theorem. 
\end{proof}
\section{Some additional comments}\label{SEC:CONCLUDING-REMARKS}
In this section, we shall discuss the approaches about the proofs, drawbacks, and some further directions using our theorem. 
\begin{itemize}
    \item We first mention the pros and cons of Theorem \ref{TH:NEW-NECESSARY-SUFFCIENT-VANISHING-L1F}. Like the theorem of Okada, Theorem \ref{TH:NEW-NECESSARY-SUFFCIENT-VANISHING-L1F} it is applicable for rational valued functions of a given period (say $N$). The advantage compared to the criterion mentioned by Okada is that the question in some sense is reduced to $d(N) + \omega(N)$ equations (modulo the `imprimitivity' which can be verified by Proposition \ref{PROP:POLY-NPRIM-ANNHILATE}) and provides somewhat of a conceptual framework, which was absent earlier. However this theorem doesn't give a new proof for Conjecture \ref{CONJ:ERDOS} for $q \equiv 3 \bmod{~} 4$. The article \cite{RAM-SARADHA} inspired a new direction for tackling this conjecture where one has to construct a ``suitable unit'' (say $u$) in $\Z[\zeta_q]$ and prove that the component of $L(1,f)$ in $\log u$ is non-zero by arithmetic considerations. In this viewpoint, it is enough to look at the solution of a single equation rather than show that the solutions of a system of equations do not vanish simultaneously. Some prudence should be exercised while approaching the problem from this viewpoint for the following reason : As seen in Example \ref{EXAMPLE:0-UNIT-CONTRIBUTION}, there are even numbers where it can be seen that $L(1,f) \in \mathbb{Q}\langle\log p : p \mid N \rangle$ for functions $f$ taking values in $\{\pm 1\} $. We see that the units do not contribute in this case and there may be odd natural numbers for which a similar situation can happen. We describe this situation by posing the following question : Is there an odd natural number $N$ for which the following holds : 
    $$\sum_{\substack{d \mid N}} a_d \varphi(d) = 0 \text{ with } a_1 = 0 \text{ and } a_d \in \{ \pm 1 \} \text{ for } d \neq 1.$$
    If this happens, then we can take $f_d = a_{N/d} \chi_{0,N/d}$ for all divisors $d$ of $N$ and we note that $(V_D)$ is satisfied for all divisors $D$ of $N$. Therefore $L(1,f) \in \mathbb{Q}\langle\log p~|~p \mid N\rangle$. When this situation occurs, it is better to write $L(s,f) = F(s)\zeta(s)$ where $F(s)$ is a Dirichlet polynomial, and show that $F(s)$ doesn't have a zero of order two at $s=1$ without appealing to Baker's theorem (if possible).
    \item It is possible to use periodic functions instead of Dirichlet characters and modify the statement of Theorem \ref{TH:NEW-NECESSARY-SUFFCIENT-VANISHING-L1F}. When we do this change, we have to ensure that the functions are linearly independent. We should count the number of ``independent periodic functions'' for a given divisor $d$ of $N$ and Theorem \ref{TH:NEW-NECESSARY-SUFFCIENT-VANISHING-L1F} states that there cannot be too many linearly independent functions if $L(1,f)$ has to vanish.  
    \item Corollary \ref{COR:N-prim-nonvanishing} highlighted a new question of checking if an arithmetic function $f$ is primitive $\bmod{~} N$ to check the vanishing of $L(s,f)$ at $s=1$. The primitivity can be checked with the help of Proposition \ref{PROP:POLY-NPRIM-ANNHILATE}. In fact counting imprimitive functions taking values in a finite set is a question of independent interest. Suppose we say that $f \in \mathbf{F_D}(N)$ is primitive $\bmod{~} d$ for a divisor $d$ of $N$ if there exists a character $\chi$ of period $N$ induced by a primitive character of conductor $d$ such that $\langle f,\chi\rangle \neq 0$ and imprimitive $\bmod{~}d$ otherwise. Now we would like to study the distribution of functions taking value in $\{ \pm 1\}$ which are imprimitive $\bmod{~}d$ for divisors $d$ of $N$ lying in a finite set. In view of Proposition \ref{PROP:POLY-NPRIM-ANNHILATE}, we are curious if elementary sieve theoretic methods can be applied in this context, as this seems to be a question related to inclusion exclusion principle.
    \item It is possible to improve the lower bound of the prime $p$ mentioned in Theorem \ref{TH:ERD-INDUCTION} by looking at the group ring isomorphism mentioned in \cite{AYOUB} along with elementary algebraic number theory. However, we would like to see if there are further applications and address the situation for functions $f$ not satisfying Property \ref{PROPERTY-U}. We relegate this to a future work.
\end{itemize}
\section{Acknowledgements}
The author thanks M. Ram Murty, Siddhi Pathak and Brad Rodgers for their comments in the preliminary draft of the paper. 
\bibliographystyle{plain}

\end{document}